\documentclass[12pt,a4paper]{extarticle}
\usepackage[english]{babel}
\usepackage[letterpaper,top=2cm,bottom=2cm,left=2.5cm,right=2.5cm,marginparwidth=1.75cm]{geometry}
\usepackage{amsmath, amsfonts, amsthm}
\usepackage{titlesec}
\usepackage{abstract}

\usepackage{tikz-cd}

\definecolor{darkblue}{rgb}{0.15,0.35,0.55}
\definecolor{reddish}{rgb}{0.65, 0.2, 0.2}
\usepackage{graphicx}
\usepackage{hyperref}
\hypersetup{
colorlinks=true,
citecolor=darkblue,
linkcolor=reddish,
urlcolor=darkblue,
pdfauthor={},
pdftitle={},
pdfsubject={}
}
\usepackage{mathtools}
\usepackage{amssymb} 
\usepackage[backend=bibtex]{biblatex}
\allowdisplaybreaks 
\usepackage{comment} 
\usepackage[capitalize]{cleveref}
\usepackage{bbm}
\usepackage{nicematrix}
\usepackage{tikz}
\usepackage[ruled,linesnumbered]{algorithm2e}

\newtheorem{theorem}{Theorem}[section]
\newtheorem{lemma}[theorem]{Lemma}
\newtheorem{corollary}[theorem]{Corollary}
\newtheorem{proposition}[theorem]{Proposition}
\newtheorem{conjecture}[theorem]{Conjecture}
\theoremstyle{definition}
\newtheorem{definition}[theorem]{Definition}
\newtheorem{remark}[theorem]{Remark}

\newtheorem{example}[theorem]{Example}

\newcommand{\R}{\mathbb{R}}
\newcommand{\C}{\mathbb{C}}
\newcommand{\Z}{\mathbb{Z}}
\newcommand{\K}{\mathbb{K}}

\title{\bf Constraining Conformal Correlators}
\author{Viktoriia Borovik, Claire de Korte, Nathan Meurrens, Dmitrii Pavlov}
\date{}

\begin{document}
\maketitle
\begin{abstract}
We study the space of 
conformally covariant 
$n$-point functions of spinning ope\-rators using methods from invariant theory, commutative algebra, and combinatorics. We show that the rational part of any such function can be expressed in terms of the basic building blocks introduced by Costa, Penedones, Poland, Rychkov, thereby providing  a rigorous proof of a result 
that is widely used in the physics literature. We 
reformulate the problem of enumeration of 
$n$-point structures in terms of counting lattice points in fractional matching polytopes, and compute these counts using vector partition functions, Hilbert functions, and Kostka numbers. 
We show that all algebraic relations between the building blocks follow from Gram constraints and compute the number of algebraically independent building blocks. 
For three-point functions, we derive closed counting formulas for arbitrary integer spins, both with and without  Bose symmetry, and discuss a necessary and sufficient condition for the partial conservation operator to lift to a differential operator written in terms of the building blocks. We provide code that generates a basis of three-point structures satisfying these constraints for given values of spins and scaling dimensions.
\end{abstract}

\begingroup
\hypersetup{linkcolor=black}
\setcounter{tocdepth}{1}
\tableofcontents
\endgroup

\newpage

\section{Introduction}

Conformal field theories (CFTs) are models of physical systems that are invariant under conformal transformations. Given a conformal field theory, its conformal correlators are correlation functions that encode the likelihood of interactions among particles as a function of their positions $\mathbf{x}\in\mathbb{R}^d$. 
Conformal symmetry requires correlators to transform covariantly under the relevant representation of the conformal group \cite[§III.C]{PRV}. Throughout this paper, we will be considering a specific subset of representations known to physicists as corresponding to conformal primaries \cite[§III.B]{PRV}. For bosonic operators, the relevant representations are tensor representations of the conformal group. Direct computation of these correlators can be extremely complicated. An alternative approach is to `bootstrap' them, using the constraints imposed by conformal symmetry to restrict the form of a correlator and, in some cases, to determine it completely; see \cite{SimmonsDuffin} for an introduction to  conformal bootstrap. Since conformal transformations are nonlinear, deriving these constraints is nontrivial. 
The embedding space formalism simplifies this dramatically, by realizing conformal transformations and their tensor representations in $\mathbb{R}^d$ as restrictions to the Poincar\'e section of Lorentz transformations and their tensor representations in $\mathbb{R}^{1,d+1}$; for details we refer to \cite{CPPR,Dirac,FGG2,FGG,MackSalam,Weinberg}.

\medskip

In this paper, we work  primarily within the embedding space formalism. Using the constraints imposed by conformal symmetry, together with Bose symmetry and partial conservation, we characterize the space of allowed structures in bosonic conformal correlators. 
We provide explicit procedures for computing these spaces for correlators of any number of ope\-rators with arbitrary fixed scaling dimensions and integer spins. Related problems have previously been studied, though from different perspectives and in more restricted settings; see, for example, \cite{DanielNathan,CT,CPPR,DKKPS, GHL, HLMM, KSD,Zhiboedov}.

\medskip
From the algebraic perspective, our starting point is the collection of \emph{basic building blocks} introduced in \cite{CPPR}. These are rational functions  defined as follows:
\begin{equation}\label{eq: confblocks intro}
    \begin{aligned}
        P_{ij} & =P_i\cdot P_j, \\ 
H_{ij} & =-2\big[(Z_i\cdot Z_j)(P_i\cdot P_j)
-(Z_i\cdot P_j)(Z_j\cdot P_i)\big],\\
V_{i,jk}
& =
\frac{(Z_i\cdot P_j)(P_i\cdot P_k)
-(Z_i\cdot P_k)(P_i\cdot P_j)}
{P_j\cdot P_k}.
    \end{aligned}
\end{equation}
Here $P_i$ and $Z_i$ are null embedding-space vectors representing position and polarisation respectively. In the physics literature, these basic building blocks are used as generators of conformally covariant structures, and it is reasoned that every bosonic conformal correlator can be expressed in terms of them. Mathematically, this means that the building blocks generate the field of rational invariants under the action of the Lorentz group, together with another group action that we call transversality. In Theorem~\ref{thm: main}, we give a rigorous proof of this statement, which is standard in the conformal bootstrap literature but, to our knowledge, had not yet been established in this form.
\medskip

Since the building blocks are not generally algebraically independent, it is natural to study the algebraic varieties they parametrize and their defining ideals.
This problem was recently investigated in \cite{MSS}, where the authors studied these ideals as well as Gram constraints arising from the vectors $P_i$ and $Z_i$.
In Section \ref{sec: alg rel}, we take this as a point of departure, but with a different emphasis. We consider the building blocks $P_{ij}$, $H_{ij}$ and $\mathcal{V}_{ij}:=V_{i,i+1j}$ and show that any algebraic relation between them is implied by the Gram constraints satisfied by the inner products $P_i\cdot P_j$, $P_i\cdot Z_j$ and $Z_i\cdot Z_j$.
We also show that any relation between the building blocks involving $V_{i,jk}$ can be rewritten using only the subset of the blocks of the form $\mathcal{V}_{ij}$. 
We then compute the number of building blocks $H_{ij}$ and $\mathcal{V}_{ij}$ that are algebraically independent over the field of rational functions in the cross-ratios in the blocks $P_{ij}$.

\medskip

Once conformally covariant structures are expressed in terms of the building blocks~\eqref{eq: confblocks intro} subject to the appropriate homogeneity conditions~\eqref{eq: homogeneity preliminaries}, one would like to count how many such structures exist for a set of prescribed spins. This counting problem appears frequently in the physics literature, especially for three- and four-point correlators, but explicit formulas are usually derived case by case or, as introduced in \cite{KSD}, through Clebsch-Gordan decompositions. In Theorem~\ref{thm: main sec 3}, we show that this problem admits a reformulation in terms of multigraded Hilbert functions, vector partition functions, and lattice-point counts in fractional matching polytopes. In Section~\ref{sec: alg rel}, we revisit this problem while accounting for algebraic dependencies among the building blocks, and compute the corresponding Hilbert~function.

\medskip

Finally, we give a detailed analysis of three-point correlation functions. In Propositions~\ref{prop:Bose_threespins} and~\ref{prop:Bose-two-spins}, we derive closed-form formulas for the number of conformally covariant structures with arbitrary spins after imposing Bose symmetry. We also study the partial conservation ope\-rator in Section~\ref{sec: conservation}. In Theorem~\ref{thm: diff op 1}, we provide necessary and sufficient conditions for the action of a differential operator $\partial_{P_i}\cdot D_{Z_i}$ on three-point functions to be captured by a differential operator $\mathcal{D}_{i,1}$ acting on the numerator in terms of the building blocks, and construct this operator explicitly, reproducing the result of \cite{Zhiboedov}. Moreover, we provide an algorithm to explicitly construct, for any choice of $s_i-t_i$, the differential operator $\mathcal{D}_{i,s_i-t_i}$ that captures the action of $(\partial_{P_i}\cdot D_{Z_i})^{s_i-t_i}$ in terms of the building blocks. We apply this to explicitly construct $\mathcal{D}_{i,2}$ for $d=3$. Necessary and sufficient conditions for the existence of higher-order operators $(\partial_{P_i}\cdot D_{Z_i})^{s_i-t_i}$ are the content of Conjectures~\ref{conj: higher order derivative} and~\ref{conj: 3point diff op conjecture}.

\medskip

We also provide code that computes a basis of three-point structures, expressed in terms of the building blocks, satisfying all of the aforementioned constraints. This code, along with other auxiliary files for our paper, is available at \cite{zenodo}:
\begin{center}
    \url{https://doi.org/10.5281/zenodo.20444515}.
\end{center}

\medskip

\noindent\textbf{Outline.} Our paper is organized as follows. In Section~\ref{sec: preliminaries} we review the embedding space formalism and the necessary mathematical background. Section~\ref{sec:matching_pols} develops the combinatorial and algebraic framework underlying the counting problem. In Section~\ref{sec:unipotent} we turn to invariant theory: we define the unipotent group action encoding transversality and prove that the basic building blocks generate the field of rational invariants. In Section~\ref{sec:ConfNpoint} we revisit the general form of conformally covariant $n$-point structures from \cite{CPPR} and relate their enumeration to the multigraded counting problem studied earlier.
Section~\ref{sec: alg rel} studies algebraic relations among the building blocks in low-dimensional cases and establishes results on the dimensions governing the number of algebraically independent functions. Section~\ref{sec: Bose} concerns Bose symmetry: we describe the action of permutation groups on the space of conformally covariant structures and compute the resulting dimension reduction in the case of three-point correlators. In Section~\ref{sec: conservation} we study partial conservation and the associated differential operators on spaces of conformally covariant structures.

\section{Mathematical and physical setup}\label{sec: preliminaries}
This section has two aims. First, we introduce the \textit{embedding space formalism} for \textit{bosonic conformal correlators}. Second, we review the relevant mathematical background. 
Since our purpose is to use rather than develop the embedding space formalism, we briefly recall only the aspects needed for this work and refer the reader to \cite{CPPR,Dirac,FGG2,FGG,MackSalam,Weinberg} for 
details.

\medskip

\subsection{Embedding space formalism}
Throughout this paper, $(\,\cdot\,)$ between bold characters denotes the Euclidean scalar product, while $(\,\cdot\,)$ between plain characters denotes the Lorentzian scalar product. We will be working in $d\ge 3$ spatial dimensions. Bold vectors $\mathbf{x}_i\in\mathbb{R}^d$ are used for the spatial position of the $i$th particle, and plain vectors $P_i\in\mathbb{R}^{1,d+1}$ for their embedding-space counterparts, the latter transforming in the fundamental representation of the Lorentz group ${\rm O}(1,d+1)$. Restricting to the \emph{Poincar\'e section}, defined as 
\begin{equation}\label{eq: Poincare}
P_i=(P_i^0,\mathbf{P}_i,P_i^{d+1})=\left(\tfrac{1}{2}[1+\mathbf{x}_i\cdot\mathbf{x}_i],\,\mathbf{x}_i,\,\tfrac{1}{2}[1-\mathbf{x}_i\cdot\mathbf{x}_i]\right),    
\end{equation}
induces the fundamental action of the conformal group $\mathrm{Conf}(d)$ on $\mathbf{x}_i\in\mathbb{R}^d$. See~\cite{Dirac} for the original construction, and \cite[Sections~4.1–4.2]{FMS} for an introduction to the conformal group and its representations. As explained in \cite{CPPR,FGG2,FGG,MackSalam,Weinberg}, this method of realizing conformal transformations as restrictions of Lorentz transformations extends beyond the fundamental representations of $\mathrm{Conf}(d)$ and ${\rm O}(1,d+1)$ to their tensor representations. 
Thus, bosonic conformal correlators can be realized as restrictions to the Poincar\'e section~\eqref{eq: Poincare} of embedding-space correlators. This is useful for the conformal bootstrap because, in the embedding space, conformal covariance lifts to Lorentz covariance together with suitable homogeneity, symmetry, tracelessness and transversality conditions~\eqref{eq: Lorentz inv}–\eqref{eq: traceless}; see \cite{FGG}.

\medskip

In \cite[p.~8–13]{CPPR}, the embedding space formalism is simplified by using index-free notation, based on the correspondence between symmetric tensors of order $s$ and homogeneous polynomials of degree $s$.
Concretely, embedding-space correlators are contracted with \textit{polarisation vectors} $Z_i\in\mathbb{C}^{1,d+1}$, one for each particle, that transform under the fundamental representation of ${\rm O}(1,d+1)$. Contracting a symmetric set of $s_i$ indices of the correlator with $s_i$ copies of $Z_i$ turns the correlator into  a polynomial of degree $s_i$ in the $m=d+2$ entries of~$Z_i$; contracting all indices yields a Lorentz-invariant polynomial in the entries of the polarisation vectors. Follo\-wing \cite{CPPR}, symmetric traceless tensors are encoded by imposing $Z_i\cdot Z_i=0$, whilst transversality is encoded by additionally imposing~$P_i\cdot Z_i=0$ and invariance under \[Z_i\mapsto Z_i+\alpha_iP_i.\]
Upon restriction to the Poincar\'e section, one recovers the polarisation
vectors of the conformal correlator, namely the vectors
$\mathbf{z}_i\in\mathbb{C}^d$ satisfying  $\mathbf{z}_i\cdot\mathbf{z}_i=0$. This is done by
setting
$$
Z_i=(Z_i^0,\mathbf{Z}_i,Z_i^{d+1})
=(\mathbf{x}_i\cdot\mathbf{z}_i,\mathbf{z}_i,-\mathbf{x}_i\cdot\mathbf{z}_i).
$$
On this section one automatically has
$
P_i\cdot P_i=P_i\cdot Z_i=Z_i\cdot Z_i=0
$.
Modulo the projective rescaling of $P_i$ and the gauge redundancy under 
$Z_i\mapsto Z_i+\alpha_iP_i$, the restriction from the space of null vectors $P_i$ and $Z_i$ satisfying $P_i\cdot Z_i=0$ to the Poincar\'e section is one-to-one.

\medskip

An \textit{$n$-point conformal correlator} (or an \emph{$n$-point function}) depends on $n$ position vectors
$\mathbf{x}_i\in\mathbb{R}^d$ for $i=1,\ldots,n$. In the index-free
embedding space formalism, bosonic conformal correlators are represented by functions
$$
G_{\mathbf{s},\mathbf{\Delta}}(P_1,Z_1,\ldots,P_n,Z_n),
$$
where, as introduced above, $P_i$ and $Z_i$ are the embedding-space vectors representing the position and polarisation of the $i$th particle. The tensor representation of ${\rm Conf}(d)$ that these functions correspond to is determined by  their spins
$\mathbf{s}=(s_1,\ldots,s_n)\in\mathbb{Z}_{\ge0}^n$ and scaling dimensions
$\mathbf{\Delta}=(\Delta_1,\ldots,\Delta_n)\in\mathbb{C}^n$. Below we
summarise the consequences of the discussion above, listing the conditions satisfied by functions $G_{\mathbf{s},\mathbf{\Delta}}(P_1,Z_1,\ldots,P_n,Z_n)$:
\begin{enumerate}
\itemsep0em
    \item \textit{Lorentz invariance.} For all $\Lambda\in{\rm O}(1,d+1)$, we have
    \begin{equation}\label{eq: Lorentz inv}
    G_{\mathbf{s},\mathbf{\Delta}}(\Lambda P_1,\Lambda Z_1,\ldots,\Lambda P_n,\Lambda Z_n)
    =
    G_{\mathbf{s},\mathbf{\Delta}}(P_1,Z_1,\ldots,P_n,Z_n).
    \end{equation}

    \item \textit{Transversality.} Equivalently, gauge invariance under
    $Z_i\mapsto Z_i+\alpha_iP_i$:
    \begin{equation}\label{eq: Transv inv}
    G_{\mathbf{s},\mathbf{\Delta}}(P_1,Z_1+\alpha_1P_1,\ldots,P_n,Z_n+\alpha_nP_n)
    =
    G_{\mathbf{s},\mathbf{\Delta}}(P_1,Z_1,\ldots,P_n,Z_n).
    \end{equation}

    \item \textit{Polynomiality and homogeneity.} The correlator is polynomial
    in the entries of $Z_i$ vectors and homogeneous of degree $-\Delta_i$ in $P_i$ and degree
    $s_i$ in $Z_i$, in other words,
    \begin{equation}\label{eq: homogeneity preliminaries}
        G_{\mathbf{s},\mathbf{\Delta}}(\lambda_1P_1,\mu_1Z_1,\ldots,\lambda_nP_n,\mu_nZ_n)
        =
        \left[\prod_{i=1}^n \lambda_i^{-\Delta_i}\mu_i^{s_i}\right]
        G_{\mathbf{s},\mathbf{\Delta}}(P_1,Z_1,\ldots,P_n,Z_n).
    \end{equation}

    \item \textit{Null-cone and polarisation constraints.} The correlator is
    considered on the locus
    \begin{equation}\label{eq: traceless}
       P_i\cdot P_i=P_i\cdot Z_i=Z_i\cdot Z_i=0,
       \qquad i=1,\ldots,n.
    \end{equation}
    Here $P_i^2=0$ is the
    null-cone condition and $Z_i^2=0$
    encodes tracelessness.
\end{enumerate}

Certain conformal correlators satisfy further conditions beyond conformal
covariance. In standard quantum field theory (QFT) derivations, 
these conditions, as the ones above, are built into the formalism and are therefore automatically satisfied. 
In the bootstrap approach, by contrast, they become additional constraints on the correlators.
Two important examples are \textit{Bose symmetry} and
\textit{partial conservation}, discussed in Sections~\ref{sec: Bose}
and~\ref{sec: conservation}, respectively.

\medskip

Bose symmetry applies to correlators involving identical bosonic particles. If
the particles $i$ and $j$ are identical, the correlator must be
invariant under exchanging $(P_i,Z_i)$ and $(P_j,Z_j)$:
\begin{equation} \label{eq:bose}
\begin{split}
    &G_{\mathbf{s},\mathbf{\Delta}}(P_1,Z_1,\ldots,P_i,Z_i,\ldots,P_j,Z_j,\ldots,P_n,Z_n)\\
    &\qquad =
    G_{\mathbf{s},\mathbf{\Delta}}(P_1,Z_1,\ldots,P_j,Z_j,\ldots,P_i,Z_i,\ldots,P_n,Z_n).
\end{split}
\end{equation}
Partial conservation applies if, for some $i$, one has $s_i\geq 1$ and
$\Delta_i=d-1+t_i$, where $t_i\in\{0,\ldots,s_i-1\}$ denotes the \textit{depth} 
of the $i$th particle. Then, by \cite{DNW}, $G_{\mathbf{s},\mathbf{\Delta}}$
must satisfy
\begin{equation}\label{eq: conservation diff eq}
    \left(\frac{\partial}{\partial P_i}\cdot D_{Z_i}\right)^{s_i-t_i}
    G_{\mathbf{s},\mathbf{\Delta}}(P_1,Z_1,\ldots,P_n,Z_n)=0,
\end{equation}
where $D_Z$ denotes  the \emph{Thomas–Todorov differential operator} defined in \cite[1.30a]{DPPT}:
\begin{equation}
    D_{Z}
    =
    \left(\frac{d}{2}-1+Z\cdot\frac{\partial}{\partial Z}\right)
    \frac{\partial}{\partial Z}
    -\frac{1}{2}Z
    \frac{\partial^2}{\partial Z\cdot\partial Z}.
\end{equation}
Here $\frac{\partial}{\partial Z}$ denotes the gradient with respect to the vector $Z$, so that contractions such as
$Z\cdot\frac{\partial}{\partial Z}$ are taken using the Lorentzian
metric. The equation~\eqref{eq: conservation diff eq} is understood on the constraint locus
$P_i^2=P_i\cdot Z_i=Z_i^2=0$, equivalently modulo the ideal generated by these
constraints.

\medskip

Before moving on, we make a brief comment on parity. Functions that are invariant under ${\rm O}(1,d+1)$ are called parity even, whilst those that are only invariant under the subgroup $\mathrm{SO}(1,d+1)$ are called parity odd. By requiring our correlators to be invariant under ${\rm O}(1,d+1)$, we have in particular imposed them to be parity even. In practise, parity odd terms can be introduced as contractions of embedding-space vectors $P_i$ and $Z_i$ with the $(d+2)$-dimensional Levi-Civita tensor. Here we will focus on parity even structures, but our results can easily be adapted to account for parity odd structures using \cite[Section 4.2.3]{CPPR}.

\subsection{Mathematical background}
Having outlined the embedding space formalism, we  recall the main
mathematical notions used in this paper. For more details, see~{\cite{cox1997ideals, michalekSturmfels2021, CCA, Shafarevich2}.}

\medskip

Let $\mathbf{x}=(x_1,\ldots,x_n)$ and
$R=\mathbb{K}[\mathbf{x}]=\mathbb{K}[x_1,\ldots,x_n]$ be the polynomial ring in $n$ variables over a field $\K$. A \emph{multigrading} of $R$
by an abelian group $A$ is a semigroup homomorphism
$$
    \deg:\mathbb{N}^n\to A,
$$
which assigns to each monomial
$\mathbf{x}^{\mathbf{u}}=x_1^{u_1}\cdots x_n^{u_n}$ the multidegree
$\deg(\mathbf{u})$.
A polynomial is \emph{homogeneous} of multidegree $\mathbf{a}\in A$ if all
its monomials have degree $\mathbf{a}$. The grading implies: 
\begin{equation}
    R=\bigoplus_{\mathbf{a}\in A} R_{\mathbf{a}},
    \qquad
    R_{\mathbf{a}}R_{\mathbf{b}}\subseteq R_{\mathbf{a}+\mathbf{b}},    
\end{equation}
where $R_{\mathbf{a}}$ is the space of homogeneous polynomials of
degree $\mathbf{a}$.
In this paper, $A=\mathbb{Z}^m$ for some $m$, and the multigrading is encoded by an integer matrix $B\in \mathbb Z^{m\times n}$. If ${\bf u}\in \mathbb Z_{\geq 0}^n$ is the exponent vector of a monomial, then its multidegree is
$
\deg({\bf x^u})=B \cdot {\bf u}
$.
For a multigraded ring~$R$, the \emph{multigraded Hilbert function}
and \emph{Hilbert series} are defined respectively as
\begin{equation}\label{eq: HS preliminaries}
 h_R:\mathbb{Z}^m\to\mathbb{Z}_{\geq 0},
    \quad
    h_R(\mathbf{a})  =\dim_{\mathbb{K}} R_{\mathbf{a}}, \qquad \text{and} \qquad 
 {\rm HS}_R(\mathbf{t})
     =
    \sum_{\mathbf{a}\in\mathbb{Z}^m}
    h_R(\mathbf{a})\mathbf{t}^{\mathbf{a}}.   
\end{equation}
In our case, $ {\rm HS}_R(\mathbf{t})$ will always be a formal power series, which we compute via
\cite[Lem.~8.16]{CCA}.

\medskip

We also use basic notions from algebraic geometry. Let $\mathbb{K}$ be an
algebraically closed field of characteristic zero. An \emph{affine variety} in
$\mathbb{K}^n$ is the common zero set of finitely many polynomials in
$R = \mathbb{K}[x_1,\ldots,x_n]$, while a \emph{projective variety} in
$\mathbb{P}_{\mathbb{K}}^n$ is the common zero set of finitely many
homogeneous polynomials in $\mathbb{K}[x_0,\ldots,x_n]$. A variety is
\emph{irreducible} if it is not the union of two proper subvarieties.
Polynomials vanishing on a variety form an
\emph{ideal} $I \subseteq R$, which is a subset of a ring $R$, closed under addition and
under multiplication by elements of~$R$. The ideal $I$ is \emph{prime} if $ab\in I$
implies $a\in I$ or $b\in I$. Any algebraic variety $X$ is completely
determined by its vanishing ideal $\mathcal{I}(X)$, and irreducible varieties correspond to prime ideals. 
Equivalently, an affine variety $X$ defined by the ideal $\mathcal{I}(X)$ can be identified with the \emph{spectrum}  $\mathrm{Spec}(\mathcal{R})$ (that is, the set of all prime ideals) of the \emph{quotient ring} $\mathcal{R}:=\mathbb{K}[\mathbf{x}]/\mathcal{I}(X)$, whose elements are given by cosets of the form $a + \mathcal{I}(X)$ with $a \in R=\K[x_1,\dots,x_n]$.

\medskip

The spaces $\mathbb{K}^n$ and $\mathbb{P}^n_{\mathbb{K}}$ can be endowed with the \emph{Zariski topology}, in which closed sets
are exactly algebraic varieties. This induces the Zariski topology on every subvariety in $\mathbb{K}^n$ and~$\mathbb{P}^n_{\mathbb{K}}$. The \emph{Zariski closure} of a subset in $\mathbb{K}^n$ and $\mathbb{P}^n_{\mathbb{K}}$ is then the
smallest variety containing~it.
\begin{remark}\label{rem: functions as formal vars}
    In this paper, algebraic varieties will  be described parametrically. We then pass to quotient rings as follows: if functions $f_1({\bf x}),\ldots,f_k({\bf x})$ define a  map to $\mathbb \K^k$, we introduce formal variables $y_1,\ldots,y_k$ and quotient $\mathbb K[y_1,\ldots,y_k]$ by the kernel of the homomorphism
$$
\mathbb K[y_1,\ldots,y_k]\rightarrow \mathbb K[\mathbf x],
\qquad
y_i\mapsto f_i(\mathbf{x}).
$$
This quotient is the \emph{coordinate ring} of the Zariski closure of the parametrized image. In our setup the role of the $y$ variables is played by the building blocks $P_{ij}$, $H_{ij}$, and $V_{i,jk}$, and the role of the $x$ variables is taken by the entries of the vectors $P_i$ and $Z_i$. We will sometimes treat $P_{ij}$, $H_{ij}$, and $V_{i,jk}$ as formal variables, which means that we ignore this quotient operation.
\end{remark}
In this work we are interested in the number of \emph{algebraically independent} conformally covariant $n$-point structures in the following sense. Elements
$f_1,\ldots,f_n$ of a commutative $\mathbb{K}$-algebra are
\emph{algebraically dependent} over an arbitrary field $\mathbb{K}$ if there exists a
polynomial $p$ in $\mathbb{K}[x_1,\ldots,x_n]\setminus\{0\}$ such that
$p(f_1,\ldots,f_n)=0$; otherwise they are \emph{algebraically independent}.
\begin{definition}[Transcendence degree] \label{def: trdeg}
    Let $\mathbb{K} \subset \mathbb{L}$ be a field extension. 
    A maximal algebraically
independent subset of $\mathbb{L}$ over $\mathbb{K}$ is a
\emph{transcendence basis}, and its cardinality is the
\emph{transcendence degree}, denoted
$\mathrm{trdeg}_{\mathbb{K}}\mathbb{L}$. This number does not depend on the choice of basis. 
\end{definition}

Since the 
building blocks of \cite{CPPR} are invariants of group
actions, we recall the relevant~invariant theory terminology. Here, we will be interested in actions of \emph{algebraic groups}, that is, groups that themselves carry the structure of an algebraic variety, on algebraic varieties.
A left action of a group $G$ on a set $S$
is a map $G\times S\to S$, written $(g,x)\mapsto g\cdot x$, such that
$e\cdot x=x$ and $g\cdot(h\cdot x)=(gh)\cdot x$. The orbit of $x\in S$ is
$\{g\cdot x:g\in G\}$. The~action is \emph{faithful} if only the identity acts
trivially on all of $S$, and \emph{free} if only the identity fixes any~point.

\medskip

In Section~\ref{sec:matching_pols} we relate constraints on
conformal correlators to lattice-point counting of polytopes, which is one of the central problems in discrete geometry.  A
\emph{polytope} in~$\mathbb{R}^n$ is the convex hull of finitely many points,
equivalently a bounded intersection of finitely many half-spaces
\cite[Sec.~1.1]{Ziegler}. A \emph{lattice point} is a point with integer
coordinates. If $P\subset\mathbb{R}^n$ is a full-dimensional lattice polytope,
then its \emph{Ehrhart polynomial} counts lattice points in $tP$:
\begin{equation}\label{eq: Ehrhart preliminaries}
 L(P,t)=\#(tP\cap\mathbb{Z}^n),
    \quad t\in\mathbb{Z}_{\geq 0}.   
\end{equation}
Ehrhart's theorem states that this function is indeed a polynomial in $t$
\cite[Theorem~12.2]{CCA}.

\section{Counting monomials}
\label{sec:matching_pols}

This section is dedicated purely to commutative algebra and combinatorics. In Section \ref{sec:ConfNpoint} its results will be related to counting conformally covariant 
$n$-point structures. We discuss the problem from three perspectives: algebraic~\eqref{per: alg}, geometric~\eqref{per: geo}, and combinatorial~\eqref{per: com}.

\medskip

Let $\K$ be a field of characteristic zero, and denote $[n]\coloneqq\{1,2,\ldots,n\}$. 
We fix a vector of non-negative integers ${\bf s}\coloneqq (s_1,\ldots,s_n)$ and consider 
$n(n-2)+\binom{n}{2}$ variables
\begin{align*}
\mathcal{V}_{ij} \quad & \text{for } i \in [n] \text{ and } 
j \in [n] \setminus \{i,i+1\}, \\
H_{ij} \quad & \text{for } 1 \le i < j \le n.
\end{align*}
We wish to count the number $N({\bf s})$ of polynomials
$
Q_{\bf s}(\mathcal{V}_{ij}, H_{ij})
$ with coefficients in $\K$
such~that
\begin{equation} \label{eq: hom_condition sec 3}
    Q_{\bf s}(\alpha_i \mathcal{V}_{ij}, \alpha_i \alpha_jH_{ij}) = \alpha_1^{s_1}\alpha_2^{s_2} \cdots \alpha_n^{s_n} \cdot Q_{\bf s}(\mathcal{V}_{ij}, H_{ij}) \quad \forall\, \alpha \in\K^n.
\end{equation}
We begin by stating the main outcome of this section. Subsections~\ref{per: alg}–\ref{per: com} present a detailed explanation of the two theorems below and the notions appearing in their statements.

\begin{theorem}\label{thm: main sec 3}
Consider polynomial rings
 $Q_n \coloneqq \K[\mathcal{V}_{ij}, H_{ij}]$ and $R_n \coloneqq \K[H_{ij}]$
endowed with $\mathbb{Z}^n$‑gradings determined by integer matrices $B$~\eqref{eq:matB} and $A$~\eqref{eq:matA}, respectively.
Let $h_R({\bf s})$ denote the Hilbert function of a $\mathbb{Z}^n$‑graded ring $R$ evaluated at
${\bf s}\in\mathbb{Z}_{\ge0}^n$.
Then the following coincide:
\begin{enumerate}
\item[(1)]
$N({\bf s})$;

\item[(2)]
the Hilbert function $h_{Q_n}({\bf s})$;

\item[(3)] the coefficient of $t_1^{s_1}t_2^{s_2}\cdots t_n^{s_n}$ in the Taylor expansion about the origin of the function
\begin{equation*}
\dfrac{1}{\prod\limits_{1\leq i\leq n}(1-t_i)^{n-2} \prod\limits_{1\leq i<j\leq n}(1-t_it_j)};    
\end{equation*}

\item[(4)]
the number of lattice points in the $n$‑point conformal polytope $\mathcal{C}_n({\bf s})$ defined in \eqref{eq: conformal polytope};

\item[(5)]
the vector partition function  
$
\varphi_B({\bf s})
$ defined in~\eqref{eq: defVectorPart},
that is, the number of ways to write the vector ${\bf s}$ as a non‑negative integer linear combination of the columns of $B$;

\item[(6)]
$$
\sum_{{\bf b}\le{\bf s}}
h_{R_n}({\bf b})
\prod_{i=1}^n
\binom{n-3+s_i-b_i}{s_i-b_i},
$$
where ${\bf b}\le{\bf s}$ is the coordinate-wise comparison.
\end{enumerate}
\end{theorem}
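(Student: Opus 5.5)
All six quantities count the same set: monomials in $Q_n$ of multidegree ${\bf s}$. I will prove their equality by identifying each one with that count.

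\textbf{The grading.} Condition \eqref{eq: hom_condition sec 3} says that $Q_{\bf s}$ is homogeneous of multidegree ${\bf s}$ for the $\mathbb{Z}^n$-grading with
\[
\deg\mathcal{V}_{ij}=e_i, \qquad \deg H_{ij}=e_i+e_j .
\]
I take this to be the grading encoded by the matrix $B$ \eqref{eq:matB}. Its columns are $e_i$, repeated $n-2$ times for each $i$, together with $e_i+e_j$ for $i<j$. The matrix $A$ \eqref{eq:matA} keeps only the columns $e_i+e_j$.

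\textbf{(1) $=$ (2).} Because $\K$ is infinite (characteristic zero), a polynomial satisfies \eqref{eq: hom_condition sec 3} for all $\alpha\in\K^n$ exactly when each of its monomials has $B$-degree ${\bf s}$. Separating monomials by their character in $\alpha$ justifies this. I read $N({\bf s})$ as the dimension of this space of polynomials, which is then $h_{Q_n}({\bf s})$.

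\textbf{(2) $=$ (5).} Each graded piece has a basis of monomials. A monomial $\mathbf{x}^{\mathbf u}$ has degree ${\bf s}$ iff $B{\bf u}={\bf s}$ with ${\bf u}\in\mathbb{Z}_{\ge0}^{N}$. So $h_{Q_n}({\bf s})$ counts such ${\bf u}$, which is the definition of $\varphi_B({\bf s})$ in \eqref{eq: defVectorPart}.

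\textbf{(2) $=$ (3).} The multigraded Hilbert series of a polynomial ring is the product over the variables of $(1-\mathbf{t}^{\deg x})^{-1}$, by \cite[Lem.~8.16]{CCA} or by expanding geometric series. This gives $\prod_i(1-t_i)^{-(n-2)}\prod_{i<j}(1-t_it_j)^{-1}$. The series converges as a formal power series because every degree is a nonzero nonnegative vector.

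\textbf{(5) $=$ (4).} I expect $\mathcal{C}_n({\bf s})$ to be defined in \eqref{eq: conformal polytope} as the fiber
\[
\{{\bf u}\ge 0 : B{\bf u}={\bf s}\}.
\]
If so, this step is immediate. If it is instead presented as a fractional-matching-type polytope in the $H$-coordinates $h_{ij}\ge0$ with $\sum_j h_{ij}\le s_i$, extra work is needed. Each $H$-exponent vector then leaves a residual $s_i-\sum_j h_{ij}$ at vertex $i$, to be split among the $n-2$ variables $\mathcal{V}_{ij}$. Lattice points must then be counted with the matching stars-and-bars multiplicities, or the polytope must be lifted to the full fiber. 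I would check which version \eqref{eq: conformal polytope} uses and give an explicit lattice-preserving bijection with the fiber.

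\textbf{(2) $=$ (6).} Factor
\[
Q_n=R_n\otimes\K[\mathcal{V}_{ij}].
\]
The Hilbert series then factors as ${\rm HS}_{R_n}(\mathbf t)\cdot\prod_i(1-t_i)^{-(n-2)}$. Extracting the coefficient of $\mathbf t^{\bf s}$ gives the convolution $\sum_{{\bf b}\le{\bf s}}h_{R_n}({\bf b})\prod_i c_i$. Here $c_i$ is the number of monomials of degree $s_i-b_i$ in the $n-2$ variables $\mathcal{V}_{i\cdot}$, namely $\binom{n-3+s_i-b_i}{s_i-b_i}$. Terms with ${\bf b}\not\le{\bf s}$ contribute zero, since all degrees are nonnegative.

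\textbf{Main obstacle.} Everything except step (4) is routine generating-function bookkeeping. The only real obstacle is matching the exact definition of $\mathcal{C}_n({\bf s})$ to the fiber of $B$, including any choice of coordinates or projection, so that the lattice points correspond bijectively.
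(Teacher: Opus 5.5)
Your proposal is correct and takes essentially the same route as the paper. The paper treats (1)=(2)=(5) as definitional ($N({\bf s}):=\dim_\K (Q_n)_{\bf s}$, with monomials of degree ${\bf s}$ corresponding to solutions of $B{\bf u}={\bf s}$), obtains (3) from \cite[Lem.~8.16]{CCA}, and obtains (6) from the binomial expansion of $(1-t_i)^{-(n-2)}$ in Remark~\ref{rem: pass to a smaller ring}. For (4), the paper defines $\mathcal{C}_n({\bf s})$ by deleting one column $\mathcal{V}_{ij_i}$ for each vertex $i$ and relaxing $B(\mathbf{y}\mid\mathbf{x})={\bf s}$ to $\hat B(\hat{\mathbf{y}}\mid\mathbf{x})\le{\bf s}$; the deleted coordinate is then the slack $s_i-(\hat B(\hat{\mathbf{y}}\mid\mathbf{x}))_i\in\mathbb{Z}_{\ge0}$, so the projection is exactly the lattice-preserving bijection with the fiber that you anticipated.
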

\noindent
By point (6) in Theorem~\ref{thm: main sec 3}, the Hilbert function $h_{Q_n}({\bf s})$ is expressed
in terms of the Hilbert function $h_{R_n}({\bf b})$. Thus, the analysis of
$h_{Q_n}$ reduces to the study of the ring $R_n=\K[H_{ij}]$.
\begin{theorem}\label{thm: main Rn sec 3}
Let $R_n=\K[H_{ij}]$ be endowed with the $\mathbb{Z}^n$‑grading defined by
the matrix $A$~\eqref{eq:matA}.
For any ${\bf b}\in\mathbb{Z}_{\ge0}^n$, the Hilbert function of $R_n$ satisfies
$$
h_{R_n}({\bf b})
= \varphi_A({\bf b}) =
\bigl|\mathcal{P}(K_n,{\bf b})\cap\mathbb{Z}^{\binom{n}{2}}\bigr|,
$$
where $\varphi_A$ is the vector partition function associated with the matrix $A$ and $\mathcal{P}(K_n,{\bf b})$ denotes the fractional perfect ${\bf b}$‑matching polytope~\eqref{eq: fractional perfect s-matching}
of the complete graph $K_n$.
Moreover, we get
\begin{equation}
    h_{R_n}(\mathbf{b}) = \sum_{\lambda \in \mathcal{P}'_{\mathrm{even}}} K_{\lambda, \mathbf{b}},
\end{equation}
where $\mathcal{P}'_{\mathrm{even}}$ denotes the set of partitions $\lambda$ of $|{\bf b}| = b_1 + \ldots + b_n$ whose
conjugate partition $\lambda'$ is even, and $K_{\lambda,{\bf b}}$ denotes the corresponding Kostka number.
\end{theorem}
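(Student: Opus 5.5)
We prove Theorem~\ref{thm: main Rn sec 3}. The plan is to realise $R_n=\K[H_{ij}]$ as a monomial algebra whose graded pieces are indexed by multigraphs, and then to count those multigraphs in two ways.

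\textbf{Setup.} The grading by $A$ assigns to $H_{ij}$ the degree $e_i+e_j\in\mathbb{Z}^n$. This matches the scaling $H_{ij}\mapsto\alpha_i\alpha_jH_{ij}$ in \eqref{eq: hom_condition sec 3}. So $A$ is the vertex--edge incidence matrix of $K_n$.

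\textbf{Step 1: Hilbert function as a partition function.} Since the $H_{ij}$ are formal variables, $R_n$ is a polynomial ring and its monomials form a basis. Hence $h_{R_n}(\mathbf{b})$ equals the number of exponent vectors $\mathbf{u}\in\mathbb{Z}_{\ge0}^{\binom n2}$ with $A\mathbf{u}=\mathbf{b}$. This number is by definition $\varphi_A(\mathbf{b})$.

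\textbf{Step 2: identification with lattice points of the polytope.} The fractional perfect $\mathbf{b}$-matching polytope $\mathcal{P}(K_n,\mathbf{b})$ is $\{\mathbf{x}\in\R_{\ge0}^{\binom n2}: A\mathbf{x}=\mathbf{b}\}$. Its lattice points are exactly these nonnegative integer solutions. This gives the second equality. Boundedness is clear, since $0\le x_{ij}\le\min(b_i,b_j)$.

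\textbf{Step 3: the Kostka formula.} This is the substantive step, and I would argue through symmetric functions. A solution $\mathbf{u}$ is a loopless multigraph on $[n]$ with degree sequence $\mathbf{b}$. Therefore
\[
\sum_{\mathbf{b}} h_{R_n}(\mathbf{b})\,\mathbf{t}^{\mathbf{b}}=\prod_{i<j}\frac{1}{1-t_it_j},
\]
where the $t_i$ play the role of the variables $x_i$ of symmetric functions. The classical Littlewood identity gives
\[
\prod_{i<j}(1-x_ix_j)^{-1}=\sum_{\lambda:\,\lambda'\text{ even}} s_\lambda(x_1,\dots,x_n).
\]
It is the companion of $\prod_{i\le j}(1-x_ix_j)^{-1}=\sum_{\lambda\text{ even}}s_\lambda$, and is proved by Burge's RSK-type correspondence for symmetric matrices with zero diagonal. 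The partitions $\lambda$ that occur satisfy $\ell(\lambda)\le n$, so none are lost by restricting to $n$ variables. Expanding $s_\lambda=\sum_{\mathbf{b}}K_{\lambda,\mathbf{b}}\mathbf{x}^{\mathbf{b}}$ and extracting the coefficient of $\mathbf{x}^{\mathbf{b}}$ yields $h_{R_n}(\mathbf{b})=\sum_{\lambda\in\mathcal P'_{\rm even}}K_{\lambda,\mathbf{b}}$. Only $\lambda\vdash|\mathbf{b}|$ contribute, as required by the definition of $\mathcal{P}'_{\mathrm{even}}$. The Kostka number is defined for weak compositions $\mathbf{b}$, and it is invariant under permuting $\mathbf{b}$, so no ordering issue arises.

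\textbf{Main obstacle.} The only nontrivial ingredient is the Littlewood identity. I would cite it (Macdonald, \emph{Symmetric Functions}, I.5 Ex.~5) rather than reprove it. If a self-contained argument were wanted, I would give the bijection directly: fixed-point-free symmetric RSK sends symmetric $\mathbb{Z}_{\ge0}$-matrices with zero diagonal, read off from $\mathbf{u}$, to semistandard tableaux whose shape has even columns. The tableau content equals the row-sum vector $\mathbf{b}$, which gives the Kostka count. The delicate point is checking the zero-diagonal $\leftrightarrow$ even-column correspondence, as opposed to the more commonly stated diagonal $\leftrightarrow$ odd-column count.
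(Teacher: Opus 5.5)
Your proposal is correct and is essentially the paper's own proof. Steps 1--2 are Proposition~\ref{prop: HF perfect matching}, where degree-$\mathbf{b}$ monomials correspond to nonnegative integer solutions of $A\mathbf{x}=\mathbf{b}$. Step 3 is Lemma~\ref{lem: Hilbert-Kostka}, which likewise expands $\prod_{i<j}(1-t_it_j)^{-1}=\sum_{\lambda'\text{ even}}s_\lambda$ via the identity in Macdonald and reads off Kostka numbers. One small wording correction: partitions with $\ell(\lambda)>n$ do occur in the infinite-variable identity. They simply vanish in $n$ variables and have $K_{\lambda,\mathbf{b}}=0$, so the formula is unaffected.
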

\subsection{An algebraic perspective: multivariate Hilbert functions}\label{per: alg}
The generating function appearing in point (3) of Theorem~\ref{thm: main sec 3}
has a direct algebraic interpretation as the Hilbert series~\eqref{eq: HS preliminaries} of a multigraded polynomial ring.
Indeed, formula~\eqref{eq: hom_condition sec 3} means that the polynomials
$Q_{\bf s}(\mathcal{V}_{ij}, H_{ij})$ are $(s_1,\ldots,s_n)$‑homogeneous with respect to the
$\mathbb{Z}^n$‑grading on the polynomial ring
$
Q_n = \K[\mathcal{V}_{ij}, H_{ij}],
$
given by
\begin{equation}\label{eq: multidegrees VH}
\deg(\mathcal{V}_{ij}) = {\bf e}_i,
\qquad
\deg(H_{ij}) = {\bf e}_i + {\bf e}_j.
\end{equation}
Here, ${\bf e}_i$ denotes the $i$th standard basis vector of $\mathbb{Z}^n$.
We encode this multigrading in an
$n \times \bigl(n(n-2)+\binom{n}{2}\bigr)$ integer matrix $B$,
whose columns are indexed by the ring generators $\mathcal{V}_{ij}$ and $H_{ij}$ and are given by
the multidegree vectors in~\eqref{eq: multidegrees VH}:
\begin{equation}\label{eq:matB}
B = (b_{k,*}), \quad \text{where } \;
b_{k,\mathcal{V}_{ij}} =
\begin{cases}
1, & k=i,\\
0, & \text{otherwise},
\end{cases}
\quad
b_{k,H_{ij}} =
\begin{cases}
1, & k=i \text{ or } k=j,\\
0, & \text{otherwise}.
\end{cases}
\end{equation}
For a fixed integer vector ${\bf s}\in\mathbb{Z}_{\ge0}^n$, the set of homogeneous polynomials of multidegree ${\bf s}$
is a finite‑dimensio\-nal $\K$‑vector space, which we denote by $(Q_n)_{\bf s}$. We define
$
N({\bf s}) \coloneqq \dim_\K (Q_n)_{\bf s}.
$
By definition, $N({\bf s})$ is the value of the Hilbert function $h_{Q_n}({\bf s})$~\eqref{eq: HS preliminaries} of the graded ring~$Q_n$.
\begin{lemma}[{\cite[Lem.~8.16]{CCA}}] \label{lem:MHS}
The Hilbert series $\sum  h_{Q_n}({\bf s})\, {\bf t}^{\bf s}$ of the ring $Q_n$ is given by
\begin{equation}\label{eq: HS conformal}
    \dfrac{1}{\prod\limits_{i=1}^n \,\prod\limits_{j\in[n]\setminus\{i,i+1\}} (1-\mathbf{t}^{\deg \mathcal{V}_{ij}}) \prod\limits_{1\leq i<j\leq n} (1-\mathbf{t}^{\deg H_{ij}})} =
\frac{1}{\prod\limits_{i=1}^n (1-t_i)^{n-2}
\prod\limits_{1\le i<j\le n} (1-t_i t_j)}.
\end{equation}
In particular, $h_{Q_n}({\bf s})$ is  the coefficient of
$t_1^{s_1}\cdots t_n^{s_n}$ in its Taylor expansion.
\end{lemma}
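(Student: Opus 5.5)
The plan is to use the standard fact that in a polynomial ring graded by a semigroup homomorphism into $\mathbb{Z}_{\ge0}^n$, the monomials form a $\K$-basis compatible with the grading. Every polynomial then splits uniquely into its multihomogeneous components, and each graded piece $(Q_n)_{\bf s}$ has as basis the monomials of multidegree ${\bf s}$. Concretely, write a monomial as ${\bf x}^{\bf u}=\prod \mathcal{V}_{ij}^{u_{ij}}\prod H_{ij}^{u'_{ij}}$. By~\eqref{eq: multidegrees VH}, its multidegree is $\sum u_{ij}{\bf e}_i+\sum u'_{ij}({\bf e}_i+{\bf e}_j)=B{\bf u}$. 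Since every column of $B$ is a nonzero vector in $\mathbb{Z}_{\ge0}^n$, only finitely many ${\bf u}$ satisfy $B{\bf u}={\bf s}$. Hence $h_{Q_n}({\bf s})=\#\{{\bf u}\in\mathbb{Z}_{\ge0}^N : B{\bf u}={\bf s}\}$ is finite.

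The Hilbert series is then
\[
\sum_{\bf s}h_{Q_n}({\bf s}){\bf t}^{\bf s}=\sum_{{\bf u}\in\mathbb{Z}_{\ge0}^N}{\bf t}^{B{\bf u}}=\prod_{\text{generators } y}\ \sum_{k\ge0}\bigl({\bf t}^{\deg y}\bigr)^k .
\]
This is an identity of formal power series in $\K[[t_1,\ldots,t_n]]$. It holds because the sum over ${\bf u}$ factorizes coordinatewise, and each coefficient is a finite sum, so the rearrangement is legitimate. Each geometric factor equals $(1-{\bf t}^{\deg y})^{-1}$. Here $1-{\bf t}^{\deg y}$ is invertible in the power series ring because ${\bf t}^{\deg y}$ has positive total degree, as $\deg y\ne 0$. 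This gives the left-hand side of~\eqref{eq: HS conformal}.

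It remains to collect factors. For each fixed $i$, the index $j$ runs over $[n]\setminus\{i,i+1\}$, with indices taken cyclically so that $n+1\equiv 1$. This set has $n-2$ elements, each contributing $(1-t_i)^{-1}$, so together they give $(1-t_i)^{-(n-2)}$. Each $H_{ij}$ with $i<j$ contributes $(1-t_it_j)^{-1}$. The ``in particular'' statement follows because the Taylor expansion at the origin of this rational function coincides with its image in $\K[[{\bf t}]]$.

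The only delicate point is justifying the infinite product and sum manipulation in $\K[[{\bf t}]]$. I would handle it by noting that each coefficient of ${\bf t}^{\bf s}$ involves only the finitely many ${\bf u}$ with $B{\bf u}={\bf s}$, and truncating each geometric series at exponent $|{\bf s}|$. Everything else is bookkeeping.
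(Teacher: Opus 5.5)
Your proof is correct and takes essentially the same approach as the paper. The paper gives no argument beyond citing the standard fact \cite[Lem.~8.16]{CCA} that a positively multigraded polynomial ring has Hilbert series $\prod_y (1-\mathbf{t}^{\deg y})^{-1}$; your monomial-basis and geometric-series argument is the usual proof of that fact, and it is followed by the same count of $n-2$ variables $\mathcal{V}_{ij}$ of degree $\mathbf{e}_i$ for each $i$.
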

\begin{remark}\label{rem: pass to a smaller ring}
The factors $(1-t_i)^{-(n-2)}$ in the Hilbert series ${\rm HS}_{Q_n}({\bf t}) = \sum  h_{Q_n}({\bf s})\, {\bf t}^{\bf s}$ \eqref{eq: HS conformal} can be expanded using the following binomial series
$$
\frac{1}{(1-t)^{n-2}}=\sum_{k\ge0}\binom{n-2+k-1}{k}t^k.
$$
As a result, their contribution to the coefficient of $t_1^{s_1}\cdots t_n^{s_n}$ is entirely
encoded by explicit binomial coefficients. Indeed, when extracting the coefficient of the monomial $t_1^{s_1}\cdots t_n^{s_n}$, the contribution of the
variables $\mathcal{V}_{ij}$ amounts to summing over all decompositions
$s_i=b_i+k_i$ with $k_i\ge0$, weighted by the binomial coefficient
$\binom{n-3+k_i}{k_i}$. Hence, in order to obtain closed‑form expressions,
it suffices to analyse the generating function associated with the variables $H_{ij}$ alone,
namely the Hilbert series of the multigraded ring $R_n=\K[H_{ij}]$:
\begin{equation}
    {\rm HS}_{R_n}({\bf t}) = \sum  h_{R_n}({\bf b})\, {\bf t}^{\bf b} = \frac{1}{\prod\limits_{1\leq i<j\leq n} (1-\mathbf{t}^{\deg H_{ij}})} = \frac{1}{\prod\limits_{1\le i<j\le n} (1-t_i t_j)}.
\end{equation}
\end{remark}
The $\mathbb{Z}^n$‑grading on the ring
$
R_n=\K[H_{ij}]
$
is induced from the grading on $Q_n$ given in~\eqref{eq: multidegrees VH} by restriction.
It is therefore determined by an $n\times\binom{n}{2}$ submatrix $A$ of the matrix $B$~\eqref{eq:matB}, whose columns are indexed by the variables $H_{ij}$ and are given by their degrees:
\begin{equation}\label{eq:matA}
A=(a_{k,*}), \quad \text{where } \;
a_{k,H_{ij}} =
\begin{cases}
1, & k=i \text{ or } k=j,\\
0, & \text{otherwise}.
\end{cases}
\end{equation}
We conclude this subsection with a motivating example.
\begin{example}[$n=3$] \label{ex: conformal polytope n=3}
Consider the polynomial ring
$
Q_3 = \K[\mathcal{V}_{13},\mathcal{V}_{21},\mathcal{V}_{32},H_{12},H_{23},H_{13}],
$
equipped with the $\mathbb{Z}^3$‑grading defined by
\begin{align*}
\deg(\mathcal{V}_{13})=(1,0,0), &\qquad \deg(H_{12})=(1,1,0),\\
\deg(\mathcal{V}_{21})=(0,1,0), &\qquad \deg(H_{23})=(0,1,1),\\
\deg(\mathcal{V}_{32})=(0,0,1), &\qquad \deg(H_{13})=(1,0,1).
\end{align*}
For ${\bf s}=(s_1,s_2,s_3)$, the space $(Q_3)_{\bf s}$ of homogeneous polynomials of multidegree
${\bf s}$ is finite\-dimensional. A basis is given by monomials
$
\mathcal{V}_{13}^{y_1} \mathcal{V}_{21}^{y_2} \mathcal{V}_{32}^{y_3} H_{12}^{x_{12}} H_{23}^{x_{23}} H_{13}^{x_{13}}
$
whose exponents satisfy
$$
B\,({\bf y}\mid{\bf x}) = {\bf s}, \quad \text{that is, } \;
\begin{cases}
y_1+x_{12}+x_{13}=s_1,\\
y_2+x_{12}+x_{23}=s_2,\\ 
y_3+x_{23}+x_{13}=s_3,  
\end{cases} 
\quad
\text{for } \; y_1,y_2,y_3,x_{12}, x_{23}, x_{13}\ge0.  
$$
Eliminating the variables $y_1,y_2,y_3$ shows that counting such monomials is equivalent to counting
integer solutions $(x_{12}, x_{23}, x_{13})\in\mathbb{Z}_{\ge0}^3$ of the following system of inequalities:
\begin{equation}\label{eq: FM3}
x_{12}+ x_{13}\le s_1,\qquad x_{12}+x_{23}\le s_2,\qquad x_{23}+ x_{13}\le s_3.
\end{equation}
The Hilbert function 
$
h_{Q_3}({\bf s})=\dim_\C (Q_3)_{\bf s}
$
counts non-negative integer solutions of~\eqref{eq: FM3}, which are also the lattice points in the convex polytope
$
\mathcal{C}_3(s_1,s_2,s_3)$ defined by~\eqref{eq: FM3} and $x_{ij}\geq 0$.
\end{example} 

\subsection{A geometric perspective: lattice points in polytopes}\label{per: geo}
Example~\ref{ex: conformal polytope n=3} suggests a geometric reformulation of the counting problem above.
For general~$n$, the Hilbert function $h_{Q_n}({\bf s})$ can be interpreted as the number of
lattice points in a family of convex polytopes depending on integer non-negative parameters 
${\bf s}=(s_1,\ldots,s_n)\in\mathbb{Z}_{\ge0}^n$.
\begin{definition} \label{def: conformal polytope}
For ${\bf s}\in\mathbb{Z}_{\ge0}^n$, we define the \emph{$n$‑point conformal polytope}
$\mathcal{C}_n({\bf s})$ to be the set 
\begin{equation} \label{eq: conformal polytope}
    \mathcal{C}_n({\bf s}) \coloneqq \{({\bf \hat{y}}, {\bf x}) \in \R_{\ge 0}^{n(n-3) + {n \choose 2}}: \; \hat{B}\,({\bf \hat{y}}\mid{\bf x})\le {\bf s}\},
\end{equation}
where $\hat{B}$ is the submatrix of $B$ defined in~\eqref{eq:matB} obtained by deleting $n$ columns corresponding to some $\mathcal{V}_{1j_1}, \ldots, \mathcal{V}_{nj_n}$. 
Equivalently,
$\mathcal{C}_n({\bf s})$ is the projection of the solution set to $B\,({\bf y}\mid {\bf x}) ={\bf s}$, where $({\bf y}, {\bf x})$ is a point in $ \R_{\ge 0}^{n(n-2) + {n \choose 2}}$, 
onto the $n(n-3) + {n \choose 2}$ many coordinates $({\bf \hat{y}}, {\bf x})$.
\end{definition}
Polytopes of this type have appeared previously in the combinatorial literature,
where they are known as \emph{fractional ${\bf s}$‑matching polytopes}. Let $G=(V,E)$ be a finite undirected graph. The ordinary \emph{matching polytope} associated with a graph $G$ is the convex hull of incidence vectors of \emph{matchings}, that is, of subsets of pairwise vertex-disjoint edges. Equivalently, a matching is encoded by a vector $x\in\{0,1\}^{|E|}$ satisfying the following vertex constraints:
$$
\sum_{e\in E_v}x_e\leq 1
\qquad\text{for all }v\in V.
$$ 
Conformal polytopes $\mathcal{C}_n({\bf s })$ introduced above have a similar form. We now elaborate on this. 

\medskip

A \emph{fractional matching} is encoded by a vector
$x\in\mathbb{R}_{\ge0}^{|E|}$ satisfying the following inequalities 
$$
\sum_{e\in E_v} x_e \le 1 \qquad \forall v\in V,
$$
where $E_v$ denotes the set of edges incident to $v$.
More generally, given a vertex weight function ${\bf s}:V\to\mathbb{Z}_{\ge0}$, a
\emph{fractional ${\bf s}$‑matching} is a vector $x\in\mathbb{R}_{\ge0}^{|E|}$ such that
$$
\sum_{e\in E_v} x_e \le {\bf s}(v) \qquad \forall v\in V.
$$
\begin{definition}
A \emph{fractional ${\bf s}$-matching polytope} ${\rm FM}_{\bf s}(G)$ is a convex polytope given by the convex hull of the  fractional ${\bf s}$‑matchings of the graph $G$, that is, 
\begin{equation}\label{eq: fractional s-matching}
{\rm FM}_{\bf s}(G) \coloneqq
\left\{
x \in \mathbb{R}_{\ge 0}^{|E|} :
\forall v \in V \; \sum_{e \in E_v} x_e \le {\bf s}(v)
\right\}.    
\end{equation}       
\end{definition}
These polytopes are bounded rational polytopes with half-integer vertices, see, for example,~\cite{Balinski1965} for details.
A \emph{fractional perfect ${\bf s}$‑matching polytope} is obtained from a fractional ${\bf s}$‑matching polytope by replacing all  inequalities by equalities, we use the notation from \cite{BEHREND20133822}:
\begin{equation}\label{eq: fractional perfect s-matching}
 \mathcal{P}(G,{\bf s}) \coloneqq \left\{
x \in \mathbb{R}_{\ge 0}^{|E|} :
\forall v \in V \; \sum_{e \in E_v} x_e = {\bf s}(v)
\right\}.   
\end{equation}
\begin{remark}
The three‑point conformal polytope $\mathcal{C}_3(s_1,s_2,s_3)$~\eqref{eq: FM3} coincides with the fractional ${\bf s}$‑matching
polytope of the complete graph $K_3$.
More generally, the conformal polytope $\mathcal{C}_n({\bf s})$ can be interpreted as the fractional
${\bf \hat{s}}$‑matching polytope of the augmented graph $K_n^*$.
The graph $K_n^*$ is obtained from the complete graph $K_n$ by attaching $n-3$ pending edges to
each vertex. It has $n+n(n-3)$ vertices and $n(n-3)+\binom{n}{2}$ edges.
The vertex weight function ${\bf \hat{s}}$ assigns weight $s_i$ to the original vertex $v_i$ of
$K_n$ and a sufficiently large integer to each of the new degree‑one vertices, so that the
corresponding inequalities in the hyperplane description of the polytope
${\rm FM}_{\bf \hat{s}}(K_n^*)$ become redundant. See Example~\ref{ex: Cs1s2s3s4} for details.
\end{remark}

\medskip

We now illustrate the above discussion in the simplest non‑trivial case for $n=3$.
The following closed‑form formula was stated in~\cite[eq.~4.20]{CPPR}. We give a proof for completeness.
\begin{proposition}\label{prop: Ns1s2s3}
Let ${\bf s}=(s_1,s_2,s_3)$ with $s_1\le s_2\le s_3$, and set
$
p=\max(0,\,s_1+s_2-s_3).
$
Then the Hilbert function value
$
h_{Q_3}({\bf s})
= \bigl|{\cal C}_3({\bf s})\cap\mathbb{Z}^3\bigr| = 
\bigl|{\rm FM}_{\bf s}(K_3)\cap\mathbb{Z}^3\bigr|
$
is given by
\begin{equation}\label{eq: N b1b2b3}
N(s_1,s_2,s_3)
=
\frac{(s_1+1)(s_1+2)(3s_2-s_1+3)}{6}
-
\frac{p(p+2)(2p+5)}{24}
-
\frac{1-(-1)^p}{16}.
\end{equation}
\end{proposition}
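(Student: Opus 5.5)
We count the lattice points of $\mathcal{C}_3({\bf s})$ directly, using the description~\eqref{eq: FM3} from Example~\ref{ex: conformal polytope n=3}. We want the number of $(x_{12},x_{23},x_{13})\in\mathbb{Z}_{\ge0}^3$ with $x_{12}+x_{13}\le s_1$, $x_{12}+x_{23}\le s_2$, $x_{23}+x_{13}\le s_3$.

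We fix the pair $(x_{12},x_{13})$ with $x_{12}+x_{13}\le s_1$, which leaves $x_{23}$ free in the range
$$0\le x_{23}\le \min(s_2-x_{12},\,s_3-x_{13}).$$
The count is therefore
$$
N=\sum_{\substack{a,c\ge0\\ a+c\le s_1}}\bigl(1+\min(s_2-a,\,s_3-c)\bigr),
$$
where $a=x_{12}$ and $c=x_{13}$. Both $s_2-a\ge s_2-s_1\ge0$ and $s_3-c\ge0$, so every term is positive.

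\textbf{First approximation.} Replace the minimum by $s_2-a$ throughout. This gives
$$\sum_{a=0}^{s_1}(s_1-a+1)(s_2-a+1),$$
a routine sum of quadratics. We expect it to evaluate to $\tfrac{(s_1+1)(s_1+2)(3s_2-s_1+3)}{6}$; the check is $s_1=0$, where it gives $s_2+1$, as it should.

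\textbf{Correction term.} We subtract the excess over the pairs where $s_3-c<s_2-a$, i.e. $c-a>s_3-s_2$. On that set the excess equals $(c-a)-(s_3-s_2)$. Substitute $u=c-a-(s_3-s_2)\ge1$. Together with $a\ge0$ and $a+c\le s_1$, this forces $2a+u\le s_1+s_2-s_3=p$. When $p\le0$ the region is empty, which is why $p=\max(0,s_1+s_2-s_3)$. The correction is then
$$
E(p)=\sum_{\substack{a\ge0,\,u\ge1\\ 2a+u\le p}} u .
$$
For fixed $a$, the inner sum is $\binom{p-2a+1}{2}$. So $E(p)=\sum_{a\ge0}\binom{p-2a+1}{2}$, which is a sum of every other triangular number.

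\textbf{Main obstacle.} The remaining work is the parity-dependent closed form for $E(p)$. We split into $p$ even and $p$ odd, sum the triangular numbers separately in each case, and verify that both cases agree with
$$\tfrac{p(p+2)(2p+5)}{24}+\tfrac{1-(-1)^p}{16}.$$
The checks are $E(1)=1$, $E(2)=3$ and $E(3)=7$. Alternatively, one can show that both sides satisfy $E(p)-E(p-2)=\binom{p+1}{2}$ and agree at $p=0$ and $p=1$. Combining the first approximation with this correction gives~\eqref{eq: N b1b2b3}.
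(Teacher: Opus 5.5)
Your proof is correct and is essentially the paper's argument: your first approximation $\sum_{a=0}^{s_1}(s_1-a+1)(s_2-a+1)$ is the paper's rectangle count $N_0$ (obtained by dropping $x_{23}+x_{13}\le s_3$), and your excess $E(p)=\sum_{a}\binom{p-2a+1}{2}$ is the paper's sum of cut-off triangles indexed by $x_{12}$. Your bijection $(a,c)\mapsto(a,u)$ and the recurrence $E(p)-E(p-2)=\binom{p+1}{2}$ with $E(0)=0$, $E(1)=1$ usefully supply two things the paper only asserts: that each cut-off region is a full triangle, and the parity-dependent closed form. One small fix: restrict the sum to $0\le a\le\lfloor (p-1)/2\rfloor$ rather than writing $\sum_{a\ge0}$, since a binomial with a negative top argument need not vanish.
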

\begin{proof}
We count lattice points in the convex polytope defined by
$$
\bigl\{(x,y,z)\in\mathbb{Z}_{\ge0}^3
\mid y+z\le s_1,\; x+z\le s_2,\; x+y\le s_3\bigr\}.
$$
Fix $z$. Ignoring the constraint $x+y\le s_3$, the remaining inequalities define a rectangle
$[0,s_2-z]\times[0,s_1-z]$ with $(s_2-z+1)(s_1-z+1)$ lattice points.
Summing over $z$ yields
$$
N_0=\sum_{z=0}^{s_1}(s_2-z+1)(s_1-z+1)
=\frac{(s_1+1)(s_1+2)(3s_2-s_1+3)}{6}.
$$
If $s_1+s_2\le s_3$, the inequality $x+y\le s_3$ is automatically satisfied and $N_0$ is the
desired count. Otherwise, the line $x+y=s_3$ cuts off a triangular region from each rectangle.
Setting $p=\max(0,s_1+s_2-s_3)$, the number of lattice points to be subtracted is
$$
\sum_{z=0}^{\lfloor (p-1)/2\rfloor}\frac{(p-2z)(p-2z+1)}{2}
=
\frac{p(p+2)(2p+5)}{24}
+
\frac{1-(-1)^p}{16}.
$$
Subtracting this correction from $N_0$ gives~\eqref{eq: N b1b2b3} and completes the proof.
\end{proof}
\noindent
We now move on to $R_n=\K[H_{ij}]$, since by Remark~\ref{rem: pass to a smaller ring} it suffices to study its Hilbert function.

\begin{proposition}\label{prop: HF perfect matching}
For ${\bf b} \in\mathbb{Z}_{\ge0}^n$, the Hilbert function of the ring
$
R_n=\K[H_{ij}]
$
satisfies
$$
h_{R_n}({\bf b})
=
\bigl|\mathcal{P}(K_n,{\bf b})\cap\mathbb{Z}^{\binom{n}{2}}\bigr|,
$$
where $\mathcal{P}(K_n,{\bf b})$ denotes the fractional perfect ${\bf b}$‑matching polytope of the
complete graph $K_n$.
\end{proposition}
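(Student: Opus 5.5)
The argument is a direct bijection between monomials of multidegree $\mathbf b$ in $R_n$ and integer points of the fractional perfect $\mathbf b$-matching polytope of $K_n$.

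First, since $R_n=\K[H_{ij}]$ is a polynomial ring whose variables are each homogeneous, every homogeneous component $(R_n)_{\mathbf b}$ has as a $\K$-basis the monomials
$$\prod_{1\le i<j\le n} H_{ij}^{x_{ij}}$$
of multidegree $\mathbf b$. Hence $h_{R_n}(\mathbf b)$ equals the number of exponent vectors $\mathbf x=(x_{ij})\in\mathbb Z_{\ge0}^{\binom n2}$ satisfying $A\mathbf x=\mathbf b$, with $A$ the matrix~\eqref{eq:matA}. This count is the vector partition function $\varphi_A(\mathbf b)$.

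Second, we unpack $A\mathbf x=\mathbf b$ row by row. By~\eqref{eq:matA}, the $k$-th row of $A$ has a $1$ exactly in the columns $H_{ij}$ with $k\in\{i,j\}$. These columns correspond to the edges of $K_n$ incident to the vertex $k$, i.e.\ to $E_k$. So the $k$-th equation reads
$$\sum_{e\in E_k} x_e=b_k.$$
Together with $x_e\ge0$, these are exactly the defining conditions of $\mathcal P(K_n,\mathbf b)$ in~\eqref{eq: fractional perfect s-matching}, with the vertex weight $\mathbf b(k)=b_k$. Consequently the admissible exponent vectors are precisely the points of $\mathcal P(K_n,\mathbf b)\cap\mathbb Z^{\binom n2}$. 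This gives the equality of cardinalities, and also identifies both with $\varphi_A(\mathbf b)$.

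The only point needing care is finiteness: each $(R_n)_{\mathbf b}$ must be finite-dimensional, equivalently $\mathcal P(K_n,\mathbf b)$ must be bounded. This holds because every coordinate satisfies $0\le x_{ij}\le b_i$. No real obstacle is expected.
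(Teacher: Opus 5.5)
Your proposal is correct and follows essentially the same argument as the paper. In both, the degree-$\mathbf b$ monomials $\prod_{i<j} H_{ij}^{x_{ij}}$ form a basis of $(R_n)_{\mathbf b}$, the multidegree condition is read off as $A\mathbf x=\mathbf b$ with $A$ the vertex--edge incidence matrix of $K_n$, and this system is recognized as the defining system of $\mathcal P(K_n,\mathbf b)$; your boundedness remark $0\le x_{ij}\le b_i$ is a harmless extra detail that the paper leaves implicit.
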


\begin{proof}
By the multigrading \eqref{eq: multidegrees VH}, a monomial
$\prod_{i<j}H_{ij}^{x_{ij}}$ has multidegree ${\bf b}$ exactly when
$$ \textstyle
\sum_{j\neq i} x_{ij} = b_i \quad \text{for all } \;i=1,\ldots,n.
$$
Thus degree-${\bf b}$ monomials correspond to the lattice points of
${
\mathcal{P}(K_n,{\bf b})
=
\{x\in\mathbb{R}_{\ge0}^{\binom{n}{2}}:Ax={\bf b}\}},
$
with $A$ the incidence matrix of $K_n$ from \eqref{eq:matA}. Counting these
monomials gives $h_{R_n}({\bf b})$.
\end{proof}
\begin{example}\label{ex: polytopes equal spins}
Consider the uniform case
$
{\bf s}=(s,s,\ldots,s).
$
Then, the fractional (perfect) ${\bf s}$‑matching polytopes
${\rm FM}_{\bf s}(G)$ and $\mathcal{P}(G,{\bf s})$ are  the  $(s/2)$‑dilations of the lattice polytopes 
\[
{\rm FM}_{\bf 2}(G) =
\left\{
x \in \mathbb{R}_{\ge 0}^E :
\forall v \in V \; \sum_{e \in E_v} x_e \le 2 
\right\}, \quad
\mathcal{P}(G, {\bf 2}) =
\left\{
x \in \mathbb{R}_{\ge 0}^E :
\forall v \in V \; \sum_{e \in E_v} x_e = 2
\right\},
\]
where $
{\bf 2}=(2,2,\ldots,2).
$
When $s$ is even, the number of lattice points in
${\rm FM}_{\bf s}(G)$ and $\mathcal{P}(G,{\bf s})$ is given exactly by evaluating the
corresponding Ehrhart polynomials~\eqref{eq: Ehrhart preliminaries}
$$L_{\mathrm{np}}(t) \coloneqq L({\rm FM}_{\bf 2}(G), t) \quad   \text{and} \quad L_{\mathrm{p}}(t) \coloneqq L(\mathcal{P}(G,{\bf 2}),t)$$
at
$t=s/2$.
For $G=K_n$ with $n=3,\ldots,7$, we computed these Ehrhart polynomials using the software \texttt{LattE}~\cite{Latte}. The results are listed below, with full polynomials available at \cite{zenodo}.
\begin{footnotesize}
\begin{align*}
n=3:\quad
 & L_{\mathrm{p}}(t) \;\,=1, \\
 & L_{\mathrm{np}}(t)=\tfrac{1}{2}(t+1)(4t^2+5t+2), \\[5pt]
n=4:\quad
 & L_{\mathrm{p}}(t)\;\,=(t+1)(2t+1), \\
 & L_{\mathrm{np}}(t)=\tfrac{1}{9}(t+1)^2(8t^4+32t^3+47t^2+30t+9), \\[5pt]
n=5:\quad
& L_{\mathrm{p}}(t)\;\,=\tfrac{1}{24}(t+1)(15t^4+60t^3+95t^2+70t+24), \\
 & L_{\mathrm{np}}(t)=\tfrac{1}{24192}(t+1)(t+2)
(2920t^8+\ldots+66600t+12096),  \\[5pt]
n=6:\quad
& L_{\mathrm{p}}(t) \;\,=\tfrac{1}{7560}(t+1)(t+2)
(608t^7+5016t^6+\ldots+17469t+3780), \\
& L_{\mathrm{np}}(t)=\tfrac{1}{3891888000}(t+1)(t+2)(2t+3)
(7796896t^{12}+140344128t^{11}+\ldots+648648000), \\[5pt]
n=7:\quad
& L_{\mathrm{p}}(t) \;\,=\tfrac{1}{19958400}(t+1)(t+2)
(47970t^{12}+863460t^{11}+\ldots+9979200),\\
& L_{\mathrm{np}}(t)=\tfrac{1}{21722339358720000}{(t+1)(t+2)(t+3)}
(586746749696t^{18}+\ldots+3620389893120000).
\end{align*}
\end{footnotesize}
Recall that $L(P,-t)$ counts the interior lattice points of the $t$-dilate of
$P$. Thus the above computations show that most of these polytopes have no
interior lattice points.
\end{example}
Unfortunately, the elementary geometric reasoning used in Proposition~\ref{prop: Ns1s2s3} does not na\-turally generalize to the case $n>3$. To address this, we will need more advanced machinery.
Namely, we reinterpret lattice-point counting problems in terms of vector partition functions.
Let $A\in\mathbb{Z}^{n\times d}$ be an integer matrix.  
The associated \emph{vector partition function} is defined by
\begin{equation}\label{eq: defVectorPart}
\varphi_A({\bf b})
\coloneqq\bigl|\{{\bf x}\in\mathbb{Z}_{\ge0}^d : A{\bf x}={\bf b}\}\bigr|,
\quad {\bf b}\in\mathbb{Z}^n.    
\end{equation}
The following theorem describes the structure of such functions.
\begin{theorem}[\cite{STURMFELS1995302}]
    Let $A$ be an $n \times d$ integer matrix of rank $n$. The vector partition function of~$A$, denoted $\varphi_A$, is a piecewise quasi‑polynomial of degree $d-n$ whose domains of quasi‑polynomiality are the maximal cones (chambers) in the chamber complex of~$A$.
\end{theorem}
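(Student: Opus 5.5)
This is the structure theorem for vector partition functions due to Sturmfels; the paper uses it as a black box. Here is how I would prove it. Throughout, assume $\ker A\cap\mathbb{R}^d_{\ge0}=\{0\}$, so that every fibre $\{{\bf x}\in\mathbb{R}^d_{\ge0}:A{\bf x}={\bf b}\}$ is a polytope and $\varphi_A$ is finite. This holds for the matrices $A$ and $B$ of \eqref{eq:matA} and \eqref{eq:matB}, which are nonnegative with no zero columns. Recall that the chamber complex of $A$ is the common refinement of the simplicial cones $\mathrm{pos}(\sigma)$, as $\sigma$ ranges over the $n$-subsets of columns that form a basis of $\mathbb{R}^n$. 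I would argue by induction on the number $d$ of columns, starting from the recurrence
$$
\varphi_A({\bf b})=\sum_{k\ge0}\varphi_{A'}({\bf b}-k\,a_d),
$$
where $a_d$ is the last column and $A'$ is $A$ with that column removed. This is just the decomposition by the value $x_d=k$, or equivalently multiplication of the generating function $\prod_{j<d}(1-{\bf t}^{a_j})^{-1}$ by $(1-{\bf t}^{a_d})^{-1}$.

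\emph{Base case} ($d=n$, so $A$ is square and invertible). Here $\varphi_A({\bf b})=1$ exactly when $A^{-1}{\bf b}\in\mathbb{Z}^n_{\ge0}$, and $\varphi_A({\bf b})=0$ otherwise. On the single chamber $\mathrm{pos}(A)$ this indicator depends only on the class of ${\bf b}$ in $\mathbb{Z}^n/A\mathbb{Z}^n$. It is therefore a quasi-polynomial of degree $0=d-n$, with period dividing $|\det A|$.

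If $A'$ has rank less than $n$, I would first reorder the columns so that the removed column keeps the rank equal to $n$. Alternatively, one can handle the rank-deficient case with the same induction, working in the span of $A'$.

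\emph{Inductive step.} Fix an open chamber $\Gamma$ of $A$ and let ${\bf b}$ range over the lattice points of $\Gamma$. Consider the ray ${\bf b}-k\,a_d$ for $k\ge0$. It leaves $\mathrm{pos}(A')$ after finitely many steps, by the pointedness assumption. Along the way it crosses walls of the chamber complex of $A'$, each spanned by $n-1$ columns of $A'$. The crossing parameters are $k=\ell_w({\bf b})/\ell_w(a_d)$, where $\ell_w$ is the integer linear form defining the wall $w$. The key claim is that, as ${\bf b}$ varies over $\Gamma$, the sequence of chambers of $A'$ visited by the ray is independent of ${\bf b}$. The reason is that every hyperplane spanned by $n-1$ columns of $A'$ together with $a_d$ is itself a wall of the chamber complex of $A$, so no such hyperplane meets the interior of $\Gamma$; hence the relative order of the crossings cannot change inside $\Gamma$.

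Granting the claim, $\varphi_A({\bf b})$ on $\Gamma$ is a finite sum of expressions
$$
\sum_{k=\lceil \alpha({\bf b})\rceil}^{\lfloor \beta({\bf b})\rfloor} q({\bf b}-k\,a_d),
$$
where $q$ is a quasi-polynomial of degree $d-1-n$ and the bounds $\alpha,\beta$ are rational linear forms. Expanding $q$ along each residue class of $k$ and applying Faulhaber's formula, this becomes a quasi-polynomial in ${\bf b}$ of degree at most $d-n$. The floor and ceiling functions only introduce periodic coefficients.

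It remains to show that the degree is exactly $d-n$. For ${\bf b}$ in the interior of $\Gamma$, the fibre polytope is $(d-n)$-dimensional. Hence $\varphi_A(t{\bf b})$ grows like $\mathrm{vol}_{d-n}$ of that fibre times $t^{d-n}$, so the leading coefficient does not vanish.

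I expect the main obstacle to be the key claim of the inductive step, namely that the combinatorics of the crossings is constant on each chamber of $A$. This is exactly where the chamber complex of $A$, rather than that of $A'$, has to appear. I would check it by identifying the walls of $A$'s chamber complex with the hyperplanes spanned by $n-1$ linearly independent columns of $A$.

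The Faulhaber summation with periodic coefficients is routine but bookkeeping-heavy. As an alternative, I would consider bypassing the induction: decompose $\prod_j(1-{\bf t}^{a_j})^{-1}$ into partial fractions over bases $\sigma$, as in the Brion–Vergne approach, and then read off a quasi-polynomial on each chamber directly.
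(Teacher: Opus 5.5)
The paper gives no proof of this statement; it cites Sturmfels and uses the result as a black box. Most of your scaffolding is sound: the recurrence over $x_d=k$, the square base case, the Faulhaber bookkeeping, and the volume argument for the exact degree. However, the step you flag as the key claim is false, and the justification you give for it looks at the wrong loci.

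The sequence of chambers of $A'$ met by the ray ${\bf b}-k\,a_d$ can change only when the ray passes through a codimension-two cell $F$ of the chamber complex of $A'$, i.e.\ when ${\bf b}$ crosses the cone $F+\R_{>0}a_d$. If $F=\mathrm{pos}(\rho)$ for some $n-2$ columns $\rho$, this cone lies in $\mathrm{pos}(\rho\cup\{a_d\})$. That set is indeed in the codimension-one skeleton of the chamber complex of $A$, and this is the case your argument covers. But codimension-two cells also arise as transversal crossings of two walls, and such cells are not spanned by columns.

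Here is a concrete counterexample. Let $n=3$, let $A'$ have columns $(0,0,1),(2,0,1),(2,2,1),(0,2,1)$, and let $a_d=(2,3,2)$. In the chart $z=1$ the two diagonals of the square cross at $r=(1,1)$. The chamber $\Gamma$ of $A$ with vertices $(1,1),(\tfrac65,\tfrac65),(1,\tfrac32),(\tfrac45,\tfrac65)$ contains the open segment from $r$ to $\bar a_d=(1,\tfrac32)$ in its interior. For ${\bf b}\in\Gamma$ to the left of this segment, the ray passes through the top, left and bottom triangles of $A'$. To the right of it, the ray passes through the top, right and bottom triangles. So your induction only yields quasi-polynomiality on a strict refinement of the chamber complex of $A$, which is not the theorem.

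To close the gap you need path independence. Write $q_T,q_L,q_B,q_R$ for the quasi-polynomials of the four chambers of $A'$ around $r$. The two formulas for $\varphi_A$ on $\Gamma$ differ by $\sum_k (q_L+q_R-q_T-q_B)({\bf b}-k\,a_d)$, summed over $k$ between the two crossing parameters. So you must show $q_L-q_T=q_B-q_R$. This identity does hold: the jump across a wall depends only on its hyperplane $H$ and on the chamber of the subconfiguration $A'\cap H$ containing that wall piece. But it is a wall-crossing statement of roughly the same depth as the theorem, and it would have to be proved alongside your induction.

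There is also a smaller omission. Your inductive hypothesis covers only lattice points in open chambers, yet for integer $k$ the points ${\bf b}-k\,a_d$ routinely land on walls of $A'$. In the example, ${\bf b}-(b_2-b_1)a_d$ lies on the diagonal wall $x=y$ for every lattice point ${\bf b}\in\Gamma$. You therefore need, and must propagate through the induction, the fact that each chamber's quasi-polynomial agrees with $\varphi_{A'}$ on the closed chamber.

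The alternative you mention at the end avoids the main problem. In a partial-fraction or residue formula indexed by bases $\sigma$, the chamber enters only through the conditions $\Gamma\subseteq\mathrm{pos}(\sigma)$, and these are constant on chambers by definition.
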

In practice, these quasi‑polynomials and their chambers can be computed using the software
package \texttt{barvinok}~\cite{Verdoolaege2007}. We illustrate this in the following examples.
\begin{example}
Consider the incidence matrix $A$~\eqref{eq:matA} of a complete graph $K_n$. Then the vector partition function $\varphi_{A}({\bf b})$ counts the number of lattice points in the polytope $\mathcal{P}(K_n,{\bf b})$. 

\smallskip

In Table~\ref{tab: K4 chambers} we list the quasi‑polynomial pieces of $\varphi_A({b_1,b_2,b_3,b_4})$ and the
corresponding chamber decomposition for $K_4$.
There are eight chambers in total. After imposing the ordering $b_1\le b_2\le b_3\le b_4$, the number of distinct chambers
reduces to two.
\begin{table}[ht]
\centering
\renewcommand{\arraystretch}{1.4}
\small
\begin{tabular}{c p{1.5cm} p{12cm}}
\hline
\textbf{Ch.} & $\varphi_A({\bf b})$ & \textbf{Chamber constraints} \\
\hline
1 &
${b_1 + 2 \choose 2}$ &
$
 b_4 \ge b_1 + b_2 - b_3 \text{ and } b_1 + b_3 -b_2 \le b_4 <  b_2 + b_3-b_1
$ \\[2pt]

2 &
${b_2 + 2 \choose 2}$ &
$
 b_4 \ge b_1 + b_2 - b_3 \text{ and } b_2 + b_3 -b_1 \le b_4 <  b_1 + b_3-b_2
$ \\[2pt]

3 &
${b_3 + 2 \choose 2}$ &
$
 b_4 \ge b_1 + b_3 - b_2 \text{ and } b_2 + b_3 -b_1 \le b_4 <  b_1 + b_2-b_3
$ \\[2pt]

4 &
${b_4 + 2 \choose 2}$ &
$
 b_4 < \min(b_2 + b_3 - b_1, \; b_1 + b_3 -b_2, \;b_1 + b_2-b_3)
$ \\[2pt]

5 &
$\tfrac{1}{8}Q_1({\bf b})$ &
$b_4 \ge b_1 - b_2 - b_3 \text{ and } b_2 + b_3 - b_1 \le b_4 < \min(b_1+b_3 - b_2, \;b_1 + b_2 - b_3)$ \\[2pt]

6 &
$\tfrac{1}{8}Q_2({\bf b})$ &
$b_4 \ge b_1 + b_3 - b_2 \text{ and } b_2 - b_1 - b_3 \le b_4 < \min(b_2+b_3 - b_1, \; b_1 + b_2 - b_3)$ \\[2pt]

7 &
$\tfrac{1}{8}Q_3({\bf b})$ &
$b_4 \ge b_1 + b_2 - b_3 \text{ and } b_3 - b_1 - b_2 \le b_4 < \min(b_2+b_3 - b_1, \; b_1 + b_3 - b_2)$ \\[2pt]

8 &
$\tfrac{1}{8}Q_4({\bf b})$ &
$b_4 \ge b_1 + b_2 - b_3 \text{ and } b_1 + b_3 - b_2 \le b_4 \le b_1+b_2+b_3$ \\[1pt]
\hline
\end{tabular}
\caption{The vector partition function $\varphi_A({\bf b})$ for the incidence matrix of $K_4$.
The quadratic polynomials are given by
$
Q_i({\bf b})=(|{\bf b}|-2b_i+2)(|{\bf b}|-2b_i+4)$ with
$|{\bf b}|=b_1+b_2+b_3+b_4$.
In all cases we impose ${\bf b}\in\mathbb{Z}_{\ge0}^4$ and $|{\bf b}|\equiv0\pmod2$, since for odd $|{\bf b}|$ one has $\varphi_A({\bf b}) = 0$.}
\label{tab: K4 chambers}
\end{table}

We carried out analogous computations for $n=3,4,5$, see~\cite{zenodo}.
The corresponding vector partition functions have $1,8,345$ chambers, respectively.
After imposing $b_1\le\cdots\le b_n$, the number of chambers reduces to
$1,2,6$, respectively.
\end{example}
\begin{example}\label{ex: Cs1s2s3s4}
For $n=4$ the conformal polytope $\mathcal{C}_4({\bf s})$ is a fractional
${\bf \hat{s}}$‑matching polytope of an augmented graph $K_4^*$ obtained from the complete graph $K_4$ by attaching one pending edge
to each vertex. The extended vertex weight function
$
{\bf \hat{s}}=(s_1,s_2,s_3,s_4,s,s,s,s)
$
is chosen so that $s\gg s_i$. The corresponding graph is illustrated below.

\smallskip

\begin{minipage}[]{0.35\textwidth}
\begin{center}
\begin{tikzpicture}[scale=0.8, every node/.style={circle, draw, inner sep=1.2pt}]
\node (v1) at (0,1.2) {$s_1$};
\node (v2) at (1.2,0) {$s_2$};
\node (v3) at (0,-1.2) {$s_3$};
\node (v4) at (-1.2,0) {$s_4$};

\node (p1) at (0,2.2) {$s$};
\node (p2) at (2.2,0) {$s$};
\node (p3) at (0,-2.2) {$s$};
\node (p4) at (-2.2,0) {$s$};

\draw (v1)--(v2)--(v3)--(v4)--(v1);
\draw (v1)--(v3);
\draw (v2)--(v4);

\draw (v1)--(p1);
\draw (v2)--(p2);
\draw (v3)--(p3);
\draw (v4)--(p4);
\end{tikzpicture}
\end{center}
\end{minipage}
\begin{minipage}[b]{0.5\textwidth}
$$\mathcal{C}_4({\bf s}) = \left\{({\bf y},{\bf x}) \in \R_{\ge {\bf 0}}^{10} : \quad
\begin{aligned}
y_1+x_{12}+x_{13}+x_{14} &\le s_1,\\
y_2+x_{12}+x_{23}+x_{24} &\le s_2,\\
y_3+x_{13}+x_{23}+x_{34} &\le s_3,\\
y_4+x_{14}+x_{24}+x_{34} &\le s_4.
\end{aligned}\; \right\}
$$
\end{minipage}

\vspace{0.2cm}
\noindent
Assuming $s_1\le s_2\le s_3\le s_4$, the corresponding vector partition function has $14$ chambers. Explicit expressions are available at the link~\cite{zenodo}.
If we further specialise to the case
$
{s_2=s_3=s_4}
$,
the chamber structure collapses to just three regions,
 listed in
Table~\ref{tab: C4 chambers}.
\begin{table}[h]
\centering
\renewcommand{\arraystretch}{1.4}
\small
\begin{tabular}{c p{10cm} p{4cm}}
\hline
\textbf{Ch.} & $\varphi_B({\bf s})$ & \textbf{Chamber constraints} \\
\hline
1 &
$1$ &
$
s_1=s_2=0
$ \\

2 &
$\tfrac{1}{20}{s_2 + 3 \choose 3}(s_2+1)(11s_2^2+44s_2+40)$ &
$
 s_1=0 \text{ and } s_2 > 0
$ \\

3 &
$\tfrac{s_2}{960}(22s_2^5+220s_2^4+855s_2^3+1620s_2^2+1508s_2+575) - 1/32 \lfloor \tfrac{s_2}{2} \rfloor $ &
$
 s_1=1 \text{ and } s_2>0
$ \\[1pt]
\hline
\end{tabular}
\caption{Number of lattice points in the conformal polytope 
$
\mathcal{C}_4(s_1,s_2,s_2,s_2)$ with $s_1\le s_2$.
}
\label{tab: C4 chambers}
\end{table}
Finally, in the uniform case $s_1=s_2=s_3=s_4$, the number of lattice points in $\mathcal{C}_4({\bf s})$ is given by
\begin{align*}
\varphi_B({\bf s}) &= \tfrac{1}{10!}\, (s+2)(s+4)\bigl(
        878\,s^{8}+16682\,s^{7}+135239\,s^{6}+609710\,s^{5} \\
     &\hspace{8.4em} +\,1669742\,s^{4}+2843828\,s^{3}+2952216\,s^{2} \\
     &\hspace{8.4em} +\,1727280\,s+453600\bigr),
        \qquad  \qquad \qquad \qquad \qquad \qquad \text{if }s\text{ is even}, \\[4pt]
\varphi_B({\bf s}) &= \tfrac{1}{10!}\,(s+1)(s+3)\bigl(
        878\,s^{8}+18438\,s^{7}+165969\,s^{6}+835410\,s^{5} \\
     &\hspace{8.4em} +\,2570367\,s^{4}+4952262\,s^{3}+5852971\,s^{2} \\
     &\hspace{8.4em} +\,3922530\,s+1185975\bigr),
        \qquad  \qquad \qquad \qquad \qquad \qquad \text{if }s\text{ is odd}.
\end{align*}
\end{example}

\subsection{A combinatorial perspective: Kostka numbers}\label{per: com}
We now move on to combinatorics. We start by introducing necessary combinatorial objects.
A \emph{partition} $\lambda=(\lambda_1,\lambda_2,\ldots)$ is a weakly decreasing sequence of non‑negative integers with finite sum $|\lambda|=\sum_i \lambda_i$.
The \emph{Young tableau} of $\lambda$ is a left‑justified array of boxes with $\lambda_i$ boxes
in the $i$th row. A \emph{semistandard Young tableau} of shape $\lambda$ is a filling of the boxes of the Young
tableau of $\lambda$ with positive integers such that the entries are weakly increasing along
each row and strictly increasing down each column.
The \emph{weight} of a tableau is the vector recording the number of occurrences of each integer.
The \emph{conjugate partition} $\lambda'$ is obtained by transposing the Young tableau.
We say that a partition $\lambda$ is \emph{even} if all entries $\lambda_i$ are even.
\begin{figure}[ht]
    \centering
\begin{tikzpicture}[scale=0.3]
\foreach \x/\y in {0/0,1/0,2/0,3/0, 0/-1,1/-1, 0/-2,1/-2}
  \draw (\x,\y) rectangle ++(1,1);
\node at (-4.5,-0.5) {$\lambda=(4,2,2)$};
\end{tikzpicture}
\qquad
\begin{tikzpicture}[scale=0.3]
\foreach \x/\y in {0/0,1/0,2/0, 0/-1,1/-1,2/-1, 0/-2, 0/-3}
  \draw (\x,\y) rectangle ++(1,1);
\node at (8.2,-1.5) {$\lambda'=(3,3,1,1)$};
\end{tikzpicture}
    \caption{$\lambda$ is even, while its conjugate $\lambda'$ is not. }
    \label{fig:placeholder}
\end{figure}

\noindent
\emph{Schur polynomials} are certain symmetric polynomials indexed by partitions.
For a partition~$\lambda$, the Schur polynomial $s_\lambda$ can be written as a sum
of monomial symmetric functions:
\begin{equation}\label{eq: schur sum of monomial func}
 s_\lambda(x_1,\ldots,x_n) = \sum\limits_{\mu} K_{\lambda\mu}\, m_\mu (x_1,\ldots,x_n),  \quad m_\mu = \sum_{\sigma \in S_n} x_1^{\sigma(\mu_1)} \cdots x_n^{\sigma(\mu_n)},  
\end{equation}
where $K_{\lambda\mu}$ are the \emph{Kostka numbers} counting  semistandard Young tableaux of shape $\lambda$
and weight $\mu$.
The relevance of these notions to our setting is explained by the following result.
\begin{lemma}\label{lem: Hilbert-Kostka}
For ${\bf b}\in\mathbb{Z}_{\ge0}^n$, the Hilbert function of the ring $R_n=\K[H_{ij}]$ satisfies
$$
h_{R_n}({\bf b})
=
\sum_{\lambda\in\mathcal{P}'_{\mathrm{even}}}
K_{\lambda,{\bf b}},
$$
where $\mathcal{P}'_{\mathrm{even}}$ denotes the set of partitions $\lambda$ of $n$ whose conjugate
partition $\lambda'$ is even.
\end{lemma}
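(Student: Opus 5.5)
The plan is to compare generating functions. By Remark~\ref{rem: pass to a smaller ring}, the Hilbert series of $R_n$ is
\[
{\rm HS}_{R_n}(\mathbf t)=\prod_{1\le i<j\le n}\frac{1}{1-t_it_j}.
\]
So it suffices to show that this product equals $\sum_{\lambda}s_\lambda(t_1,\ldots,t_n)$, where $\lambda$ ranges over all partitions whose conjugate $\lambda'$ is even. Here $\lambda$ has arbitrary size, and $|\lambda|=|\mathbf b|$ is forced by degree; the statement's ``partitions of $n$'' should be read as partitions of $|\mathbf b|$, as in Theorem~\ref{thm: main Rn sec 3}. Granting this identity, we expand each Schur polynomial by \eqref{eq: schur sum of monomial func}. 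The coefficient of $\mathbf t^{\mathbf b}$ in $s_\lambda$ is $K_{\lambda,\mathbf b}$; this holds even when $\mathbf b$ is not weakly decreasing, because Kostka numbers are invariant under permuting the weight, a consequence of the symmetry of $s_\lambda$ (Bender--Knuth involutions). Comparing coefficients of $\mathbf t^{\mathbf b}$ on both sides then gives the lemma.

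The key input is the classical Littlewood identity
\[
\prod_{i<j}\frac{1}{1-t_it_j}=\sum_{\lambda:\ \lambda'\text{ even}} s_\lambda(t_1,\ldots,t_n).
\]
In the language of symmetric functions this is usually stated as $\prod_{i<j}(1-x_ix_j)^{-1}=\sum_{\mu\text{ with even columns}}s_\mu$. It is equivalent, via the involution $\omega$, to the companion identity $\prod_{i\le j}(1-x_ix_j)^{-1}=\sum_{\lambda\text{ even}}s_\lambda$. I would cite it from Macdonald, \emph{Symmetric Functions and Hall Polynomials}, Ch.~I, \S5, Ex.~5. A self-contained route is through the RSK-type bijection of Burge (or Knuth's symmetric RSK), which sends symmetric nonnegative integer matrices with zero diagonal (equivalently, the exponent vectors $x\in\mathbb Z_{\ge0}^{\binom n2}$ of monomials $\prod H_{ij}^{x_{ij}}$) bijectively to semistandard tableaux whose shape has all column lengths even, i.e.\ shapes $\lambda$ with $\lambda'$ even. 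The bijection sends row sums to weight. Since a monomial in $R_n$ has degree $\mathbf b$ exactly when the row sums of the associated symmetric matrix $(x_{ij})$ equal $\mathbf b$, as in Proposition~\ref{prop: HF perfect matching}, this bijection directly matches degree-$\mathbf b$ monomials with tableaux of weight $\mathbf b$ and even-column shape. That gives the count $\sum_\lambda K_{\lambda,\mathbf b}$.

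The main obstacle is proving the Littlewood identity, or the zero-diagonal RSK bijection, from scratch rather than citing it. If a proof is wanted, I would start from the companion identity $\sum_{\lambda \text{ even}} s_\lambda=\prod_{i\le j}(1-x_ix_j)^{-1}$. This follows from symmetric RSK, where fixed points of the transpose map contribute odd columns, and even shapes correspond to symmetric matrices with even diagonal. I would then apply $\omega$, which transposes shapes and, in the plethystic sense, turns $\prod_{i\le j}$ into $\prod_{i<j}$ inverted appropriately. Alternatively, one can use the Pfaffian/bialternant proof in Macdonald. Either way the remaining steps are routine coefficient extraction. One should also check the trivial consistency condition: both sides vanish when $|\mathbf b|$ is odd, because an even-column shape has even size.
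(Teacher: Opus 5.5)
Your proposal is correct and is essentially the paper's proof. Both use the Hilbert series $\prod_{i<j}(1-t_it_j)^{-1}$, the identity $\prod_{i<j}(1-t_it_j)^{-1}=\sum_{\lambda'\ \mathrm{even}}s_\lambda$ cited from Macdonald, and coefficient extraction via $s_\lambda=\sum_\mu K_{\lambda\mu}m_\mu$. You are also right that ``partitions of $n$'' should read ``partitions of $|\mathbf{b}|$''.

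One caution on your optional from-scratch sketch. Under Knuth's symmetric RSK the trace equals the number of odd-length columns, so zero diagonal corresponds directly to $\lambda'$ even, exactly as in your second paragraph; the detour through the companion identity and $\omega$ is unnecessary. Your claim that even shapes correspond to even diagonals is false for Knuth's RSK: the matrix $\bigl(\begin{smallmatrix}0&1\\1&0\end{smallmatrix}\bigr)$ goes to shape $(1,1)$. That claim needs a modified insertion such as Burge's. Finally, $\omega$ maps $\prod_{i\le j}(1-x_ix_j)^{-1}$ to $\prod_{i<j}(1-x_ix_j)^{-1}$ with no inversion.
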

\begin{proof}
    By \cite[Lemma 8.16]{CCA}, the Hilbert function $h_{R_n}({\bf b})$ is the coefficient of $t_1^{b_1}t_2^{b_2}\cdots t_n^{b_n}$ in 
    \[
     {\rm HS}_{R_n}({\bf t}) = \sum  h_{R_n}({\bf b})\, {\bf t}^{\bf b} = \frac{1}{\prod\limits_{1\leq i<j\leq n} (1-\mathbf{t}^{\deg H_{ij}})} = \frac{1}{\prod\limits_{1\le i<j\le n} (1-t_i t_j)}.
    \]
A variant of Cauchy’s identity (see~\cite[p.~77]{Macdonald}) gives the expansion
$$
\frac{1}{\prod_{1\le i<j\le n} (1-t_i t_j)}
=
\sum_{\lambda\in\mathcal{P}'_{\mathrm{even}}} s_\lambda.
$$
Since each Schur polynomial $s_\lambda$ is homogeneous of total degree $|\lambda|$, the
homogeneous component of total degree $|{\bf b}|$ in this series is the sum of $s_\lambda$ over
partitions $\lambda$ of $|{\bf b}|$ whose conjugate $\lambda'$ is even.
Expanding each $s_\lambda$ in the symmetric monomial functions basis as
$
s_\lambda=\sum_{\mu} K_{\lambda\mu}\, m_\mu,
$
and extracting the coefficient of the monomial $t_1^{b_1}\cdots t_n^{b_n}$ yields the stated formula.
\end{proof}
\begin{corollary}
The number of lattice points in the fractional perfect ${\bf b}$-matching polytope~is
\[
|\mathcal{P}(K_n, {\bf b}) \cap \Z^{n \choose 2}| = \varphi_A({\bf b}) = \sum_{\lambda\in\mathcal{P}'_{\mathrm{even}}}
K_{\lambda,{\bf b}},
\]
where $\lambda$ is a partition of $|{\bf b}|$ with even conjugate and $\varphi_A({\bf b})$ is a vector partition function~\eqref{eq: defVectorPart}.
\end{corollary}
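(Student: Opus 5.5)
The corollary combines Proposition~\ref{prop: HF perfect matching} and Lemma~\ref{lem: Hilbert-Kostka}, both of which compute $h_{R_n}({\bf b})$. The plan is to prove two equalities separately and then join them through the Hilbert function.

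\textbf{First equality.} We show $|\mathcal{P}(K_n,{\bf b})\cap\Z^{\binom n2}|=\varphi_A({\bf b})$ directly from the definitions. By \eqref{eq: fractional perfect s-matching}, a lattice point of $\mathcal{P}(K_n,{\bf b})$ is a vector ${\bf x}\in\Z_{\ge0}^{\binom n2}$ with $\sum_{e\ni v}x_e=b_v$ for every vertex $v$. Since $A$ in \eqref{eq:matA} is exactly the vertex–edge incidence matrix of $K_n$, these conditions are precisely $A{\bf x}={\bf b}$. By \eqref{eq: defVectorPart}, the number of such ${\bf x}$ is $\varphi_A({\bf b})$. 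The proof of Proposition~\ref{prop: HF perfect matching} also identifies this common number with $h_{R_n}({\bf b})$, because monomials $\prod H_{ij}^{x_{ij}}$ of multidegree ${\bf b}$ correspond bijectively to such ${\bf x}$.

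\textbf{Second equality.} Lemma~\ref{lem: Hilbert-Kostka} gives $h_{R_n}({\bf b})=\sum_{\lambda}K_{\lambda,{\bf b}}$, summed over partitions with even conjugate. Chaining this with the first equality yields the claim.

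\textbf{Main point of care.} The lemma's statement says the sum is over partitions "of $n$", but the corollary requires partitions of $|{\bf b}|$. I would clarify this using the lemma's proof: the Cauchy-type identity $\prod_{i<j}(1-t_it_j)^{-1}=\sum_{\lambda'\text{ even}}s_\lambda(t_1,\dots,t_n)$ is graded by total degree. The coefficient of ${\bf t}^{\bf b}$ therefore only sees Schur polynomials with $|\lambda|=|{\bf b}|$, and in $s_\lambda=\sum_\mu K_{\lambda\mu}m_\mu$ that coefficient is $K_{\lambda,{\bf b}}$.

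Two further remarks complete the argument. Here $K_{\lambda,{\bf b}}$ should be read as the number of semistandard tableaux of weight ${\bf b}$. This is independent of the order of the entries of ${\bf b}$, since Schur polynomials are symmetric, so no sorting of ${\bf b}$ is needed. Also, partitions with more than $n$ rows contribute nothing, because $s_\lambda(t_1,\dots,t_n)=0$ in that case, so the finite-variable identity matches the sum as stated.
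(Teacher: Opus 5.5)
Your proof is correct and follows the same route as the paper, where the corollary is stated without proof as an immediate consequence of Proposition~\ref{prop: HF perfect matching}, the definition of $\varphi_A$, and Lemma~\ref{lem: Hilbert-Kostka}. You are also right to read the sum as running over partitions of $|{\bf b}|$, as in Theorem~\ref{thm: main Rn sec 3}; the phrase ``partitions of $n$'' in the lemma's statement is a typo.
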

\begin{example}
The first graded pieces of the expansion of the Hilbert series of
$
R_4
$ are
\begin{center}
\vspace{-0.5cm}
\begin{minipage}[]{0.35\textwidth}
\begin{align*}
\deg 0:\quad & s_{\varnothing},\\[2pt]
\deg 2:\quad & s_{(1,1)},\\[2pt]
\deg 4:\quad & s_{(2,2)} + s_{(1,1,1,1)},\\[2pt]
\deg 6:\quad & s_{(3,3)},
\end{align*}      
\end{minipage} 
\begin{minipage}[]{0.5\textwidth}
\begin{align*}
\deg 8:\quad &
s_{(4,4)} + s_{(3,3,1,1)} + s_{(2,2,2,2)},\\[2pt]
\deg 10:\quad &
s_{(5,5)} + s_{(4,4,1,1)} + s_{(3,3,2,2)},\\[2pt]
\deg 12:\quad &
s_{(6,6)} + s_{(5,5,1,1)} + s_{(4,4,2,2)} + s_{(3,3,3,3)},\\[2pt]
\deg 14:\quad &
s_{(7,7)} + s_{(6,6,1,1)} + s_{(5,5,2,2)} + s_{(4,4,3,3)}.
\end{align*}    
\end{minipage}
\end{center}
For example, the degree 4 piece is $(m_{(2,2)} + m_{(2,1,1)} + 2\,m_{(1,1,1,1)}) + m_{(1,1,1,1)}$. Thus,
\[
    h_{R_n}(2,2,0,0) = 1, \quad h_{R_n}(2,1,1,0) = 1, \quad h_{R_n}(1,1,1,1) = 3.
\]
\end{example}
For ${\bf b}={\bf 1}=(1,\ldots,1)$, the Kostka numbers $K_{\lambda,{\bf 1}}$ count the number of
\emph{standard Young tableaux} of shape $\lambda$.
This number is given by the well-known hook‑length formula~\cite{FrameRobinsonThrall1954}:
$$
K_{\lambda,{\bf 1}}
=
\frac{|\lambda|!}{\prod_{(i,j)\in\lambda} h_\lambda(i,j)},
$$
where $h_\lambda(i,j)$ denotes the \emph{hook length} of the cell $(i,j)$ in the Young tableau of
$\lambda$, that is, the number of cells directly to the right of $(i,j)$ and  below it,
including the cell $(i,j)$ itself.
We therefore obtain the following corollary.
\begin{corollary}
The number of lattice points in the fractional perfect matching polytope of the complete graph
$K_n$ satisfies
$
\bigl|\mathcal{P}(K_n)\cap\mathbb{Z}^{\binom{n}{2}}\bigr|
=
h_{R_n}({\bf 1})
=
\sum_{\lambda\in\mathcal{P}'_{\mathrm{even}}}
\frac{n!}{\prod_{(i,j)\in\lambda} h_\lambda(i,j)}.
$
\end{corollary}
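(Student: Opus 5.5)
The corollary is a concatenation of results already established, specialised to the all-ones weight $\mathbf b=\mathbf 1=(1,\ldots,1)$. Here $\mathcal P(K_n)$ is read as $\mathcal P(K_n,\mathbf 1)$. The plan has three steps.

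First, apply Proposition~\ref{prop: HF perfect matching} with $\mathbf b=\mathbf 1$. This identifies $\bigl|\mathcal P(K_n,\mathbf 1)\cap\mathbb Z^{\binom n2}\bigr|$ with $h_{R_n}(\mathbf 1)$. Concretely, the degree-$\mathbf 1$ monomials of $R_n$ are exactly the products $\prod H_{ij}^{x_{ij}}$ with $\sum_{j\ne i}x_{ij}=1$ for every $i$, i.e.\ the genuine perfect matchings of $K_n$.

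Second, invoke Lemma~\ref{lem: Hilbert-Kostka} at $\mathbf b=\mathbf 1$. This gives
\[
h_{R_n}(\mathbf 1)=\sum_{\lambda\in\mathcal P'_{\rm even}} K_{\lambda,\mathbf 1},
\]
where the sum runs over partitions $\lambda$ of $|\mathbf 1|=n$ with $\lambda'$ even. The lemma's wording says partitions of $n$, while its proof and Theorem~\ref{thm: main Rn sec 3} say partitions of $|\mathbf b|$; at $\mathbf b=\mathbf 1$ these agree, so there is no ambiguity.

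Third, convert the Kostka numbers. A semistandard tableau of shape $\lambda\vdash n$ and weight $\mathbf 1$ uses each of $1,\ldots,n$ exactly once. The row and column conditions then become strict increase, so such a tableau is a standard Young tableau. Hence $K_{\lambda,\mathbf 1}=f^\lambda$, and the hook-length formula~\cite{FrameRobinsonThrall1954} gives $f^\lambda=n!/\prod_{(i,j)\in\lambda}h_\lambda(i,j)$. Substituting completes the chain of equalities.

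I expect no real obstacle here. The only points needing care are the two just mentioned: reading $\mathcal P'_{\rm even}$ as partitions of $|\mathbf b|=n$, and the observation that weight $\mathbf 1$ forces semistandard tableaux to be standard.

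As a sanity check, for odd $n$ no $\lambda\vdash n$ has even conjugate, since $|\lambda|=|\lambda'|$ would have to be even; so the sum is empty and both sides vanish, consistent with $K_n$ having no perfect matching. For even $n$ the sum equals $(n-1)!!$, the number of perfect matchings, by the classical involution count. This check could be added as a remark, but it is not needed for the proof.
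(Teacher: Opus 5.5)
Your proposal is correct and follows the paper's own argument: specialise Proposition~\ref{prop: HF perfect matching} and Lemma~\ref{lem: Hilbert-Kostka} to $\mathbf b=\mathbf 1$, identify $K_{\lambda,\mathbf 1}$ with the number of standard Young tableaux of shape $\lambda$, and apply the hook-length formula. Your side remarks are also accurate: at $\mathbf b=\mathbf 1$ the sum is over partitions of $|\mathbf b|=n$, and the sum equals $(n-1)!!$ for even $n$ and is empty for odd $n$. These remarks are not needed for the proof.
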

\begin{remark}\label{rem: transportation polytope}
 Another interpretation of the value $ h_{R_n}(\mathbf{b}) $ is as the number of symmetric ${n\times n}$ matrices with zero diagonal and non-negative integer entries having prescribed row sums~$ b_i $. That is, the fractional perfect 
$\bf{b}$-matching polytope $\mathcal{P}(K_n, \bf{b})$  is a symmetric analogue of the classical \emph{transportation polytope}~\cite{BOLKER1972251}. These polytopes naturally appear when counting conformally covariant tensor structures in $d=4$, see \cite[Equation 15]{Heslop2026}. When $\mathbf{b} = (1,\dots,1)$, the polytope $\mathcal{P}(K_n, \bf{b})$ is the symmetric analogue of the well-known \emph{Birkhoff polytope}. In the uniform case $ \mathbf{b} = (b,\ldots,b) $, \cite[Theorem~1.1]{McKay2011ASYMPTOTICEO} gives an asymptotic formula for the hilbert function $ h_{R_n}(\mathbf{b}) $ as $ n \to \infty $, assuming the row sum is sufficiently large. The authors further conjecture that their result holds for all row sums.
\end{remark}

\section{Building blocks as fundamental invariants} \label{sec:unipotent}
In the physics literature \cite{DanielNathan,CT,CPPR,DKKPS,Zhiboedov} one uses the fact that rational
$n$-point structures satisfying Lorentz invariance and transversality can be
expressed in terms of the \emph{basic building blocks}~\eqref{eq: confblocks intro}. We give an
algebraic proof of this statement using tools from invariant theory:  the building blocks generate
the field of rational invariants under
$Z_i\mapsto Z_i+\alpha_iP_i$.  
We use Weyl's theorems on Lorentz
invariants and Rosenlicht's theorem on rational invariants.

\medskip
Let $n\geq 2$ and $m=d+2\geq 5$, where 
$d$ denotes the number of spatial dimensions, and let $\K$ be an algebraically closed field
of characteristic zero. Consider configurations of $n$ embedding-space vectors~$P_i$ together with $n$ polarization vectors~$Z_i$:
$$
    (P_1,Z_1,\ldots,P_n,Z_n)\in W:=(\K^m)^{2n},
$$
where the Lorentz group $\mathrm{O}(1,m-1)$ acts on all vectors as follows:
$$P_i \mapsto \Lambda P_i \quad \text{ and } \quad Z_i \mapsto \Lambda Z_i \quad \text{ for } \; \Lambda \in \mathrm{O}(1, m{-}1).$$ 
We
write $(\,\cdot\,)$ for the Lorentzian scalar product. A classical theorem of  Weyl~\cite[p.~65]{Weyl} states that the ring of
Lorentz-invariant polynomials is generated by Lorentzian scalar products:
$$
    \K[P_i,Z_i]^{\mathrm{O}(1,m-1)}
    =
    \K[P_i\cdot P_j,\;Z_i\cdot Z_j,\;P_i\cdot Z_j].
$$
Moreover, these scalar products also generate the field of rational Lorentz
invariants:
\begin{equation*}
 \K(P_i,Z_i)^{\mathrm{O}(1,m-1)}
    =
    \mathrm{Frac}\,
    \bigl(\K[P_i\cdot P_j,\;Z_i\cdot Z_j,\;P_i\cdot Z_j]\bigr).
\end{equation*}
Indeed, the identity component of $\mathrm{O}(1,m-1)$, the \emph{restricted Lorentz group} $\mathrm{SO}^+(1,m-1)$, is  semisimple
 and hence has no non‑trivial characters, so the claim follows from \cite[Lemma~2.2]{CollSans}.

\medskip

The relations among these scalar products are called \emph{Gram relations}.
When
$2n\leq m$, the scalar products are algebraically independent, and in the case $2n>m$, the
ideal of Gram relations is generated by the $(m+1)\times(m+1)$ minors of the Gram
matrix \cite[p.~75]{Weyl},
\begin{equation}\label{eq: gram matrix}
\mathcal{G}
=
\begin{pmatrix}
P_i\cdot P_j & P_i\cdot Z_j\\
Z_i\cdot P_j & Z_i\cdot Z_j
\end{pmatrix}_{1\leq i,j\leq n}.
\end{equation}
The commutative algebra of these ideals was studied in~\cite{MSS}.
As described in Section~\ref{sec: preliminaries}, we~set
\begin{equation}\label{eq:null-cone}
    P_i^2=0,\quad P_i\cdot Z_i=0,\quad Z_i^2=0,
    \qquad i=1,\ldots,n.
\end{equation}
For the remaining scalar products, we introduce the following shorthand notation:
$$
    p_{ij}=P_i\cdot P_j=p_{ji},\quad
    q_{ij}=P_i\cdot Z_j,\quad
    r_{ij}=Z_i\cdot Z_j=r_{ji},
    \qquad i\neq j.
$$
Let
$
    S=\K[p_{ij},q_{ij},r_{ij}\mid i\neq j]
$ be the polynomial ring in these $n(2n - 2)$ variables,
and let $I_{\mathrm{Gram}}\subseteq S$ be the ideal obtained by evaluating the minors of~\eqref{eq: gram matrix} on the locus~\eqref{eq:null-cone}. We
write
\begin{equation}\label{eq:R-def}
    \mathcal{R}:=S/I_{\mathrm{Gram}},
    \quad
    \mathcal{F}:=\mathrm{Frac}(\mathcal{R}),
    \quad
    X:=\mathrm{Spec}(\mathcal{R}).
\end{equation}
Thus the affine variety $X$ parametrises all valid configurations of scalar products arising from null and transverse vectors in $\K^m$. 
By \cite[Theorem~2.5]{MSS}, for $m\geq 5$ we have
\begin{equation}\label{eq: trdegF}
\operatorname{trdeg}_{\K}\mathcal{F}
=
\dim X
=
\begin{cases}
n(2m-3)-\binom{m}{2}, & m<2n\\
n(2n-2), & m\geq 2n.
\end{cases}
\end{equation}
We now discuss the transversality constraint, that is, invariance under $Z_i \mapsto Z_i + \alpha_i P_i$.
\begin{definition}[Translation action]\label{def:transact}
The additive group
$(\K^n,+) \cong \mathbb{G}_a^n$
acts on $W$ by
\begin{equation}\label{eq:transact}
 \forall \,  \alpha = (\alpha_1,\ldots,\alpha_n) \in \mathbb{G}_a^n\colon
  \quad
 \alpha \cdot P_i = P_i,
  \quad
  \alpha \cdot Z_i = Z_i + \alpha_i\,P_i,
  \quad i=1,\ldots,n.
\end{equation}
The translation action \eqref{eq:transact} defines an injective
algebraic group homomorphism
\[
  \rho \colon \,  \mathbb{G}_a^n \;\rightarrow\; \mathrm{GL}_{2nm}(\K).
\]
Explicitly,
the image of $\alpha = (\alpha_1,\ldots,\alpha_n) \in \mathbb{G}_a^n$ is the
block-diagonal matrix
\begin{equation*}\label{eq:rho}
  \rho(\alpha)
  \;=\;
  \operatorname{diag}\left(
    \begin{pmatrix} {\rm I}_m & 0 \\ \alpha_1 {\rm I}_m & {\rm I}_m \end{pmatrix},\;
    \begin{pmatrix} {\rm I}_m & 0 \\ \alpha_2 {\rm I}_m & {\rm I}_m \end{pmatrix},\;
    \ldots,\;
    \begin{pmatrix} {\rm I}_m & 0 \\ \alpha_n {\rm I}_m & {\rm I}_m \end{pmatrix}
  \right)
  \;\in\;\mathrm{GL}_{2nm}(\K).
\end{equation*}
  In
particular, every element of $\rho(\mathbb{G}_a^n)$ is unipotent. Denote $\rho(\mathbb{G}_a^n)$ by $G$.
\end{definition}
\begin{proposition}\label{prop: freeaction} 
The action~\eqref{eq:transact} is generically free.
   In particular, it is faithful.
\end{proposition}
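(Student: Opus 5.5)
The plan is to compute stabilizers directly. A point $w=(P_1,Z_1,\ldots,P_n,Z_n)\in W$ is fixed by $\alpha\in\mathbb{G}_a^n$ if and only if $Z_i+\alpha_iP_i=Z_i$ for every $i$, that is, $\alpha_iP_i=0$ for all $i$. The $P_i$ are untouched by the action, so there are no further conditions.

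Consider the subset
\[
U:=\{w\in W:\ P_i\neq 0 \text{ for all } i=1,\ldots,n\}.
\]
It is the complement of the finite union of the proper closed subvarieties $\{P_i=0\}$, hence nonempty and Zariski open. Since $W$ is an irreducible affine space, $U$ is dense in $W$. For $w\in U$, the condition $\alpha_iP_i=0$ with $P_i\neq0$ forces $\alpha_i=0$ for each $i$. So the stabilizer of every point of $U$ is trivial.

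The set $U$ is also $\mathbb{G}_a^n$-stable, because the action leaves each $P_i$ unchanged. The action is therefore free on a dense open invariant subset, which is what generic freeness means.

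Faithfulness follows at once. If $\alpha$ acts trivially on all of $W$, then in particular it fixes some point of $U$, so $\alpha=0$. Equivalently, the homomorphism $\rho$ is injective, as already noted in Definition~\ref{def:transact}.

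There is no real obstacle in this argument. The only point that needs care is the reading of ``generically free''. If it is meant on the physically relevant locus cut out by \eqref{eq:null-cone} rather than on all of $W$, the same argument still applies. That locus is also preserved by the action, since $(Z_i+\alpha_iP_i)^2=Z_i^2+2\alpha_iP_i\cdot Z_i+\alpha_i^2P_i^2=0$ and $P_i\cdot(Z_i+\alpha_iP_i)=0$ there. It contains points with all $P_i\neq0$: take nonzero null vectors, which exist for $m\ge 3$. Intersecting with $U$ then gives a nonempty open subset of the locus on which the action is free, and this subset is dense in each irreducible component meeting $U$.
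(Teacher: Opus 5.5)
Your proof is correct and is essentially the paper's argument: restrict to the dense open set $U$ where every $P_i\neq 0$, on which $\alpha_iP_i=0$ forces $\alpha=0$, and deduce faithfulness from this. Your additional checks, that $U$ is invariant and that the same argument works on the locus cut out by \eqref{eq:null-cone}, are correct; the paper leaves them implicit.
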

\begin{proof}
We show that the action of $G$ is free on the open dense subset
\[
  U = \{(P_1,Z_1,\ldots,P_n,Z_n)^{\top}\in W \mid \forall\,i : \, P_i \neq 0\} \subset W.
\]
Indeed, for arbitrary $x\in U$, as well as arbitrary $\alpha=(\alpha_1,\ldots,\alpha_n)\in\mathbb{K}^n$, we have 
    \begin{equation*}
        \rho(\alpha)x=(P_1,Z_1+\alpha_1P_1,\ldots,P_n,Z_n+\alpha_nP_n)^{\top}.
    \end{equation*}
    Since none of the $P_i$ are zero, the only $\alpha$ for which $\rho(\alpha)x=x$ is $\alpha=0$.
\end{proof}
Since the translation action~\eqref{eq:transact} commutes with the action of Lorentz group $\mathrm{O}(1, m{-}1)$, it induces corresponding transformation rules on the generators of the polynomial ring $S$.

\begin{proposition}\label{prop:transformation-rules}
For $\alpha \in \mathbb{G}_a^n$, the generators of $S$ transform as follows:
\begin{equation}\label{eq:transform}
\alpha \cdot p_{ij} = p_{ij}, \qquad
\alpha \cdot q_{ij} = q_{ij} + \alpha_j\, p_{ij}, \qquad
\alpha \cdot r_{ij} = r_{ij} + \alpha_i\, q_{ij} + \alpha_j\, q_{ji} + \alpha_i \alpha_j\, p_{ij}.
\end{equation}
\end{proposition}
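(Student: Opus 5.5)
The plan is to compute the action directly on the scalar products, using the bilinearity of the Lorentzian product together with the explicit form of the translation action~\eqref{eq:transact}. The action of $\alpha$ on a polynomial function $f\in\K[W]$ is by substitution, $(\alpha\cdot f)(x)=f(\rho(\alpha)x)$, or by $\rho(-\alpha)$ if one insists on a left action. I will adopt the substitution convention implicit in \eqref{eq:transform}, and remark that with the other convention only the sign of $\alpha$ changes. Since $\rho(\alpha)$ fixes every $P_i$ and sends $Z_j$ to $Z_j+\alpha_jP_j$, each generator $p_{ij},q_{ij},r_{ij}$ of $S$, viewed as a function on $W$, is pulled back to the same scalar product with $Z_j$ replaced by $Z_j+\alpha_jP_j$.

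The computation then proceeds generator by generator.
\begin{itemize}
\item For $p_{ij}=P_i\cdot P_j$, nothing changes, so it is invariant.
\item For $q_{ij}=P_i\cdot Z_j$, we get $P_i\cdot(Z_j+\alpha_jP_j)=q_{ij}+\alpha_j\,p_{ij}$.
\item For $r_{ij}=Z_i\cdot Z_j$, expanding $(Z_i+\alpha_iP_i)\cdot(Z_j+\alpha_jP_j)$ bilinearly gives
\[
r_{ij}+\alpha_i\,P_i\cdot Z_j+\alpha_j\,Z_i\cdot P_j+\alpha_i\alpha_j\,p_{ij}.
\]
Using symmetry of the form, $Z_i\cdot P_j=P_j\cdot Z_i=q_{ji}$, which yields exactly \eqref{eq:transform}.
\end{itemize}

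Two compatibility points need to be checked so that this is a genuine action on $S$, and on $\mathcal{R}$, rather than just a formula. First, the translation action commutes with $\mathrm{O}(1,m-1)$, because $\Lambda(Z_i+\alpha_iP_i)=\Lambda Z_i+\alpha_i\Lambda P_i$. Hence it preserves the ring of Lorentz invariants, which by Weyl's theorem is generated by the scalar products, and the formulas above define the induced action. Second, the constraint locus~\eqref{eq:null-cone} is preserved. Indeed,
\[
P_i\cdot(Z_i+\alpha_iP_i)=P_i\cdot Z_i+\alpha_iP_i^2=0,
\qquad
(Z_i+\alpha_iP_i)^2=Z_i^2+2\alpha_iP_i\cdot Z_i+\alpha_i^2P_i^2=0
\]
on the locus. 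Consequently the diagonal entries omitted from $S$ stay zero, and the Gram ideal $I_{\mathrm{Gram}}$ is mapped to itself, since the Gram matrix transforms by congruence $\mathcal{G}\mapsto M^{\top}\mathcal{G}M$ with $M$ unipotent and hence its minors ideal is preserved.

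There is no real obstacle. The only point requiring care is the sign and left-versus-right convention for the induced action on functions, together with the observation that the formulas are consistent with the relations. Both follow from the congruence description of the Gram matrix.
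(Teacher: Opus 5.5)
Your proof is correct and follows the paper's argument: $p_{ij}$ is unchanged, and $q_{ij}$, $r_{ij}$ are obtained by expanding with bilinearity and symmetry of the Lorentzian product, using $Z_i\cdot P_j=q_{ji}$. The compatibility checks you add (the null-cone locus is preserved, and $I_{\mathrm{Gram}}$ is preserved via the congruence $\mathcal{G}\mapsto M^{\top}\mathcal{G}M$) are not part of this proof in the paper; they are handled separately in the paper's subsequent well-definedness lemma.
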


\begin{proof}
The $p_{ij}$ are independent of the $Z_k$ and are therefore invariant. The remaining rules follow from bilinearity and symmetry of the Lorentzian scalar product:
\begin{align*}
P_i \cdot (Z_j + \alpha_j P_j) &= P_i \cdot Z_j + \alpha_j\, P_i \cdot P_j = q_{ij} + \alpha_j\, p_{ij}, \\
(Z_i + \alpha_i P_i) \cdot (Z_j + \alpha_j P_j) &= Z_i \cdot Z_j + \alpha_i\, P_i \cdot Z_j + \alpha_j\, Z_i \cdot P_j + \alpha_i \alpha_j\, P_i \cdot P_j \\
&= r_{ij} + \alpha_i\, q_{ij} + \alpha_j\, q_{ji} + \alpha_i \alpha_j\, p_{ij}. \qedhere
\end{align*}
\end{proof}

\begin{lemma}\label{lem: well-defined}
The action of $\mathbb{G}_a^n$ on $S$~\eqref{eq:transform} induces a well-defined action on the affine variety $X = \operatorname{Spec}(\mathcal{R})$ and on the field of fractions $\mathcal{F} = \operatorname{Frac}(\mathcal{R})$.
\end{lemma}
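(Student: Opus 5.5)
The plan is to show that the substitution rules~\eqref{eq:transform} define, for each $\alpha\in\mathbb{G}_a^n$, a $\K$-algebra automorphism $\phi_\alpha$ of $S$ with $\phi_\alpha(I_{\mathrm{Gram}})=I_{\mathrm{Gram}}$. Then $\phi_\alpha$ descends to an automorphism of $\mathcal{R}=S/I_{\mathrm{Gram}}$. Since $\mathcal{R}$ is a domain ($X$ is irreducible, as implicit in~\eqref{eq: trdegF} and \cite{MSS}), $\phi_\alpha$ also extends to $\mathcal{F}=\mathrm{Frac}(\mathcal{R})$. Dually, it gives a morphism $X\to X$. Two things remain: that $\alpha\mapsto\phi_\alpha$ is a group action, and that the Gram ideal is preserved.

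\textbf{Action axioms.} Define $\phi_\alpha$ on generators by~\eqref{eq:transform} and extend it to a ring homomorphism of the polynomial ring $S$. A direct check gives $\phi_0=\mathrm{id}$ and $\phi_\alpha\circ\phi_\beta=\phi_{\alpha+\beta}$. For instance, on $r_{ij}$ the cross terms $\alpha_i\beta_j p_{ij}+\beta_i\alpha_j p_{ij}$ combine correctly into $(\alpha_i+\beta_i)(\alpha_j+\beta_j)p_{ij}$. In particular each $\phi_\alpha$ is invertible with inverse $\phi_{-\alpha}$. A cleaner way to see both axioms at once is to note that $\phi_\alpha$ is pullback along the linear change of variables induced by $Z_i\mapsto Z_i+\alpha_iP_i$, which is a group action on $W$ by Definition~\ref{def:transact}.

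\textbf{Preservation of $I_{\mathrm{Gram}}$.} This is the main step. Consider the full Gram matrix $\mathcal{G}$ of~\eqref{eq: gram matrix}, with diagonal entries set to zero according to~\eqref{eq:null-cone}. Let $M_\alpha$ be the $2n\times 2n$ block matrix that sends the basis $(P_1,\dots,P_n,Z_1,\dots,Z_n)$ to $(P_1,\dots,P_n,Z_1+\alpha_1P_1,\dots)$. Formally, applying $\phi_\alpha$ entrywise to $\mathcal{G}$ gives $M_\alpha\,\mathcal{G}\,M_\alpha^{\top}$.

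The one subtlety is the diagonal, since the entries $P_i\cdot Z_i$ and $Z_i\cdot Z_i$ were set to zero and are not variables of $S$. We must check that the transformed diagonal entries are still zero modulo the locus. We have $(Z_i+\alpha_iP_i)\cdot P_i=0+\alpha_i\cdot 0$, and $(Z_i+\alpha_iP_i)^2=0+2\alpha_i\cdot 0+\alpha_i^2\cdot 0$. So the identity $\phi_\alpha(\mathcal{G})=M_\alpha\mathcal{G}M_\alpha^{\top}$ holds as matrices over $S$.

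By the Cauchy--Binet formula, every $(m+1)\times(m+1)$ minor of $M_\alpha\mathcal{G}M_\alpha^{\top}$ is a $\K[\alpha]$-linear combination of $(m+1)$-minors of $\mathcal{G}$. Hence $\phi_\alpha(I_{\mathrm{Gram}})\subseteq I_{\mathrm{Gram}}$. Applying the same argument to $-\alpha$ gives the reverse inclusion, so we have equality. When $2n\le m$ there are no such minors, $I_{\mathrm{Gram}}=0$, and the claim is trivial.

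\textbf{Passing to $X$ and $\mathcal{F}$.} Since $\phi_\alpha$ preserves $I_{\mathrm{Gram}}$, it induces automorphisms of $\mathcal{R}$ that still satisfy the group law, hence an action on $X=\operatorname{Spec}\mathcal{R}$. This action is algebraic, because the formulas are polynomial in $\alpha$, i.e.\ they come from a coaction $\mathcal{R}\to\mathcal{R}\otimes\K[\alpha]$. Automorphisms of a domain extend uniquely to its fraction field, which gives the action on $\mathcal{F}$.

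The main obstacle is the diagonal bookkeeping in the preservation step. Cauchy--Binet is only applicable after confirming that the specialization to~\eqref{eq:null-cone} commutes with the transformation.
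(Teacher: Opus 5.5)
Your proof is correct and follows the same route as the paper: define $\phi_\alpha$ on $S$, check $\phi_\alpha\circ\phi_\beta=\phi_{\alpha+\beta}$, and show that $I_{\mathrm{Gram}}$ is preserved because the Gram matrix transforms by the congruence $\mathcal{G}\mapsto M_\alpha\mathcal{G}M_\alpha^{\top}$. Your Cauchy--Binet step, together with the explicit check of the null-cone entries, tightens the paper's argument: the paper only notes that rank is preserved, which is a statement about zero loci and so implicitly needs $I_{\mathrm{Gram}}$ to be radical, whereas your version gives $\phi_\alpha(I_{\mathrm{Gram}})\subseteq I_{\mathrm{Gram}}$ directly; like the paper, you still assume that $\mathcal{R}$ is a domain, i.e.\ that $I_{\mathrm{Gram}}$ is prime, and irreducibility of $X$ alone would not give this.
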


\begin{proof}
For each $\alpha \in \K^n$, the substitution rules \eqref{eq:transform} are polynomial in $\alpha$ and in the generators of $S$, and hence define a $\K$-algebra endomorphism $\phi_\alpha\colon S \to S$. One verifies directly from~\eqref{eq:transform} that $\phi_\alpha \circ \phi_\beta = \phi_{\alpha + \beta}$ and $\phi_0 = \mathrm{id}$, so each $\phi_\alpha$ is an automorphism with inverse $\phi_{-\alpha}$.

\smallskip

It remains to show that $I_{\mathrm{Gram}}$ is preserved by $\phi_\alpha$, that is, that $\phi_\alpha(I_{\mathrm{Gram}}) \subseteq I_{\mathrm{Gram}}$. To see this, observe that the translation action \eqref{eq:transact} is a linear automorphism of $W$, and so the Gram matrix $\mathcal{G}' \coloneqq \mathcal{G}(\alpha \cdot P, \alpha \cdot Z)$ of the transformed vectors is related to the original by
$$
\mathcal{G}' = M^\top\, \mathcal{G}\, M, \qquad \text{where} \quad
M = \operatorname{diag}\!\left(
\begin{pmatrix} 1 & 0 \\ \alpha_1 & 1 \end{pmatrix},
\ldots,
\begin{pmatrix} 1 & 0 \\ \alpha_n & 1 \end{pmatrix}
\right).
$$
Since $M$ is invertible, $\operatorname{rank}(\mathcal{G}') = \operatorname{rank}(\mathcal{G})$. In particular, the $(m{+}1) \times (m{+}1)$ minors of $\mathcal{G}'$ vanish if and only if the corresponding minors of $\mathcal{G}$ vanish. Thus $\phi_\alpha$ maps $I_{\mathrm{Gram}}$ to itself.
\end{proof}
\begin{remark}
The unipotent group $G$ is non‑reductive. Consequently, Hilbert’s finiteness
theorem does not apply, and the invariant ring $\mathcal{R}^{G}$ need not be
finitely generated (cf.~Nagata’s counterexample to Hilbert’s 14th
problem~\cite{Nagata1959}). In contrast, the field of rational invariants~$\mathcal{F}^{G}$ is finitely generated for any algebraic group.
\end{remark}
As in Proposition~\ref{prop: freeaction},  using the
transformation rules \eqref{eq:transform}, one can verify that the induced action
of $G$ on $\mathcal{R}$ is faithful. Moreover, since a general point
$x\in\mathrm{Spec}(\mathcal{R})$ does not lie on any hypersurface
$\{p_{ij}=0\}$, the action is generically free. We therefore obtain the
following.
\begin{corollary}\label{cor: GenOrbit}
    A general orbit of the $G$-action on $\mathrm{Spec}(\mathcal{R})$ is isomorphic to $\mathbb{G}_a^n$. Thus, 
    $$\dim (G \cdot x) = n \quad \text{for a general}  \;\, x \in \mathrm{Spec}(\mathcal{R}).$$
\end{corollary}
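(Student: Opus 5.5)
To prove Corollary~\ref{cor: GenOrbit}, I will show that the stabilizer of a general point $x\in X=\mathrm{Spec}(\mathcal{R})$ is trivial. Then the orbit map $G\to G\cdot x$, $g\mapsto g\cdot x$, is a bijective morphism onto the orbit, and its dimension equals $\dim G=n$. I will work with the explicit transformation rules~\eqref{eq:transform}, which Lemma~\ref{lem: well-defined} shows define an action on $X$.

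First, consider the open subset $U_X=\{x\in X: p_{ij}(x)\neq 0 \text{ for all } i\neq j\}$. I claim it is dense, meaning it meets every component of $X$; it is enough to show that no $p_{ij}$ vanishes identically on $X$. The variety $X$ is the Zariski closure of the image of the scalar-product map from the null, transverse configurations in $W$. A configuration with $P_i\cdot P_j\neq0$ exists for $m\ge5$: take two non-orthogonal null vectors. Hence $p_{ij}\notin I_{\mathrm{Gram}}$. If irreducibility of $X$ is needed, I will take it from \cite{MSS}, where $X$ is the image closure of an irreducible variety.

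Next, fix $x\in U_X$ and suppose $\alpha\cdot x=x$. By~\eqref{eq:transform} the $q$-coordinates satisfy $q_{ij}+\alpha_j p_{ij}=q_{ij}$. Since $n\ge2$, for each $j$ there is some $i\neq j$ with $p_{ij}(x)\neq0$, so $\alpha_j=0$ for every $j$. The stabilizer is therefore trivial, and the action is free on $U_X$.

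Finally, the orbit map $\mathbb{G}_a^n\to G\cdot x$ is a bijective morphism of varieties over a field of characteristic zero. The orbit is smooth and locally closed, as is any orbit of an algebraic group action, and it is normal. Zariski's main theorem then shows the bijective morphism is an isomorphism, so $G\cdot x\cong\mathbb{G}_a^n$. Alternatively, the differential of the orbit map is injective, because the $q$-coordinates recover $\alpha$ linearly, which gives separability directly. In either case $\dim(G\cdot x)=n$. The only non-formal step is the density of $U_X$, and that reduces to exhibiting a single configuration with nonzero $p_{ij}$.
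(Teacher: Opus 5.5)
Your proposal is correct and takes essentially the paper's route: the paper notes that a general point of $\mathrm{Spec}(\mathcal{R})$ avoids every hypersurface $\{p_{ij}=0\}$, so the rule $q_{ij}\mapsto q_{ij}+\alpha_j p_{ij}$ forces the stabilizer to be trivial and the action is generically free. You additionally justify the density of $U_X$ (using irreducibility of $X$, which the paper also tacitly assumes) and the step from a free orbit to an isomorphism with $\mathbb{G}_a^n$; note that the affine-linear recovery $\alpha_j=\bigl(q_{ij}(\alpha\cdot x)-q_{ij}(x)\bigr)/p_{ij}(x)$ already gives a regular inverse to the orbit map, so Zariski's main theorem is not needed.
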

We now introduce the main result from invariant theory that is used in this section.
\begin{definition}
Let $G$ be an algebraic group acting on an irreducible variety $X$ over $\K$.
\begin{itemize}
    \item A rational invariant $f$ is said to \emph{separate} two orbits
$\mathcal{O}_1$ and $\mathcal{O}_2$ if it is defined on both orbits and takes
different values on them.
\item  A set $M$ of invariants separates
two orbits if it contains an invariant  separating
them. 
\item Finally, $M$ \emph{separates orbits in general position} if
there exists a nonempty Zariski-open subset $X_0 \subset X$ such that $M$ separates the
orbits of any two inequivalent points of $X_0$.
\end{itemize}
\end{definition}
\noindent
The next result is due to Rosenlicht with modern exposition presented 
in~\cite[Chapter~2]{PopovVinberg}.

\begin{theorem}[Rosenlicht, 1956] \label{thm: Rosenlicht}
Let $G$ be an algebraic group acting on an irreducible variety~$X$.
There exists a finite set of rational invariants $M = \{f_1,\ldots,f_r\}$ that
separates orbits in general position. Moreover, the field of invariants
$\K(X)^G$ is generated by $M$, and
\begin{equation}\label{eq: trdegInvField}
\mathrm{trdeg}_\K\bigl(\K(X)^G\bigr)
  =
  \dim X - \max_{x\in X}\, \dim(G\cdot x).    
\end{equation}
\end{theorem}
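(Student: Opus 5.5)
The plan is the classical one: realize the orbit of a generic point as a subvariety defined over a subfield of $\K(X)$, and show that this smallest field of definition is exactly $\K(X)^G$. We may reduce first to the case where $G$ is connected. Indeed, for $G^\circ\subseteq G$ of finite index, $\K(X)^G=(\K(X)^{G^\circ})^{G/G^\circ}$ is the fixed field of a finite group. So finite generation, and the separation of generic orbits, pass from $G^\circ$ to $G$ by Galois theory: the finite group permutes the finitely many $G^\circ$-orbits making up a $G$-orbit. Also $\dim G\cdot x=\dim G^\circ\cdot x$.

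\textbf{Step 1.} Consider the action graph
\[
\Gamma=\overline{\{(x,g\cdot x)\}}\subseteq X\times X,
\]
the closure of the image of $G\times X\to X\times X$, which is irreducible. Let $\eta$ be the generic point of the first factor. The generic fibre $\Gamma_\eta\subseteq X_{\K(X)}$ is the closure of the orbit of the generic point; it is a closed subvariety of $X$ over the field $L:=\K(X)$. Let $k\subseteq L$ be its field of definition, i.e.\ the smallest subfield over which the ideal of $\Gamma_\eta$ is generated. This is the field generated by the coefficients of a reduced Gröbner basis, so $k$ is finitely generated over $\K$.

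\textbf{Step 2.} Show $k=\K(X)^G$. For the inclusion $k\subseteq \K(X)^G$: $G$ acts on $L$ through its action on $X$, and $\Gamma$ is stable under $(x,y)\mapsto(gx,y)$ because orbits of $x$ and $gx$ coincide. Hence the reduced Gröbner basis is carried to itself, and its coefficients are $G$-invariant. For the reverse inclusion, take $f\in\K(X)^G$ and consider the graph of $f$. Since $f$ is constant on orbits, the function $(x,y)\mapsto f(y)$ on $\Gamma$ equals $f(x)$. So on $\Gamma_\eta$ the function $f(y)$ is the constant $f(\eta)\in L$. A constant value of a regular function on a variety defined over $k$ lies in $k$: it equals $\mathrm{tr}$-type combinations of coefficients, or one argues via Galois/derivation invariance of the ideal over $k$. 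Hence $f\in k$. This gives generation by a finite set $M=\{f_1,\dots,f_r\}$ of generators of $k$.

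\textbf{Step 3.} Separation and the dimension formula. Choose a model $Y$ of $k$ together with a dominant rational map $\pi=(f_1,\dots,f_r)\colon X\dashrightarrow Y$. By Step 1, the generic fibre of $\pi$ and $\Gamma_\eta$ are both the smallest $k$-defined subvarieties containing $\eta$, so they coincide. Spreading out over a dense open set, we obtain an open $X_0$ on which the fibres of $\pi$ are exactly the orbit closures. For a connected group, an orbit is open in its closure and distinct orbits in $X_0$ have distinct closures, shrinking $X_0$ so that all orbits there have maximal dimension. Hence $M$ separates orbits in $X_0$. The fibre dimension theorem then gives
\[
\operatorname{trdeg}_\K k=\dim Y=\dim X-\dim(\text{generic fibre})=\dim X-\dim G\cdot x
\]
for generic $x$. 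By lower semicontinuity of $x\mapsto\dim G\cdot x$, the generic value is the maximum, which is \eqref{eq: trdegInvField}.

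The main obstacle is Step 2's reverse inclusion together with the identification in Step 3 of the generic fibre of $\pi$ with the generic orbit closure, rather than merely something containing it. I would handle both with the field-of-definition characterization: any $\sigma\in\mathrm{Aut}(\overline{L}/k)$ fixes $\Gamma_\eta$, and the orbit closure is the unique irreducible component through $\eta$ (after passing to $G^\circ$). Only if this becomes delicate in positive generality would I fall back on the exposition in \cite[Chapter~2]{PopovVinberg}.
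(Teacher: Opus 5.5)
The paper gives no proof of this statement. It quotes Rosenlicht's theorem as a black box via \cite[Chapter~2]{PopovVinberg} and only applies it, in Theorem~\ref{thm: main}, Corollary~\ref{cor: trdegF} and Proposition~\ref{prop:cu}. So there is no internal argument to compare against. Your sketch is a correct version of the classical proof: you take the closure $\Gamma$ of the action graph and the generic orbit closure $\Gamma_\eta\subseteq X_L$, and use that the coefficients of the reduced Gr\"obner basis of its ideal are $G$-invariant.

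The one structural choice worth noting is the order of the steps. You obtain generation directly, by showing that the field of definition $k$ of $\Gamma_\eta$ equals $\K(X)^G$. You then derive separation and the dimension formula from the quotient map $\pi$. A frequent alternative specialises the invariant coefficients at general points to get separation first. It then deduces generation from the characteristic-zero lemma that a separating family generates $\K(X)^G$. Your order avoids that birationality lemma, but it makes the fibre identification in Step~3 carry the weight.

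Several steps are stated too quickly, though none is a real gap.

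\emph{Passing from $G^\circ$ to $G$.} Finite generation of $\K(X)^G$ is automatic, since it is an intermediate field of the finitely generated extension $\K(X)/\K$. What this passage actually needs is separation. Take $M$ to generate $\K(X)^G$, and use that invariants of the finite group $G/G^\circ$ separate its general orbits on a model of $\K(X)^{G^\circ}$.

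\emph{The constant value lies in $k$.} This is cleanest by linear disjointness. Write $\Gamma_\eta=Z_L$ with $Z$ geometrically integral over $k$ (as $G$ is connected). Then $k(Z)\otimes_k L\hookrightarrow L(Z_L)$, and $k(Z)\cap L=k$.

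\emph{Minimality of $\Gamma_\eta$ in Step~3.} This does not follow from Step~1. Argue instead in two directions. First, $F_L$ is cut out by the $G$-invariant equations $f_i(y)=f_i(\eta)$, so it is $G_L$-stable and $\overline{F}_L\supseteq\Gamma_\eta$. Second, $\overline{F}$ is the closure in $X_k$ of the image of $\eta$. So $\overline{F}_L$ is the smallest $k$-defined closed set containing $\eta$, which gives $\Gamma_\eta\supseteq\overline{F}_L$.

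\emph{Spreading out.} You need $\Gamma_x=\overline{G\cdot x}$ for general $x$, and this is not automatic for a fibre of a closure. Take a dense open $O\subseteq\Gamma$ contained in the image of $G\times X$. Over a dense open of $X$, every component of $\Gamma_x$ has dimension $\dim\Gamma-\dim X$, which exceeds $\dim(\Gamma\setminus O)_x$. Hence $\Gamma_x$ is the closure of $O_x\subseteq\{x\}\times G\cdot x$.
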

We now recall the definition of the \emph{basic building blocks} \eqref{eq: confblocks intro} from \cite{CPPR}. 
For distinct
indices $i,j,k$ in $[n]=\{1,\ldots,n\}$, these are defined as the following rational functions in $\mathcal{F}$:
\begin{equation}\label{eq: confblocks}
\begin{aligned}
  P_{ij}
  &= p_{ij},\\
  H_{ij}
  &= -2\bigl(r_{ij}\,p_{ij}-q_{ji}\,q_{ij}\bigr),\\
  V_{ijk}
  &
= \frac{q_{ji}\,p_{ik}-q_{ki}\,p_{ij}}{p_{jk}}.
\end{aligned}
\end{equation}
Additionally let
$\mathcal{V}_{ij}$ denote $V_{i,i+1,j}$ with  $j\neq i,i+1$, where indices are taken modulo $n$.
\begin{proposition}\label{prop: inv}
The $n(2n-3)$  functions $P_{ij}$, $H_{ij}$, $\mathcal{V}_{ij}$ are $G$-invariant:
\[
  f(\alpha\cdot x) = f(x)
  \qquad
  \forall\,\alpha\in \mathbb{G}_a^n,\quad x\in \mathrm{Spec}(\mathcal{R}),\quad
  f\in\{P_{ij},\,H_{ij},\,\mathcal{V}_{ij}\}.
\]
\end{proposition}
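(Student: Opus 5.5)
The plan is to verify invariance directly at the level of the polynomial ring $S$ and its fraction field, using the transformation rules of Proposition~\ref{prop:transformation-rules}. By Lemma~\ref{lem: well-defined}, each $\phi_\alpha$ is a $\K$-algebra automorphism of $S$ preserving $I_{\mathrm{Gram}}$. It therefore descends to $\mathcal{R}$ and extends to $\mathcal{F}$. So it suffices to show that $\phi_\alpha(f)=f$ already holds in $\mathrm{Frac}(S)$ for each $f\in\{P_{ij},H_{ij},\mathcal{V}_{ij}\}$. This is an identity of rational functions in the formal variables $p,q,r,\alpha$, and no Gram relation is needed. The count $n(2n-3)$ is simply $\binom n2+\binom n2+n(n-2)$.

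\textbf{Step 1.} $P_{ij}=p_{ij}$ is fixed by \eqref{eq:transform}.

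\textbf{Step 2.} For $H_{ij}$, I will expand $\phi_\alpha(r_{ij}p_{ij}-q_{ji}q_{ij})$. Using $\alpha\cdot q_{ij}=q_{ij}+\alpha_j p_{ij}$ and $\alpha\cdot q_{ji}=q_{ji}+\alpha_i p_{ji}$, together with $p_{ij}=p_{ji}$, we get
\[
(r_{ij}+\alpha_iq_{ij}+\alpha_jq_{ji}+\alpha_i\alpha_jp_{ij})p_{ij}-(q_{ji}+\alpha_ip_{ij})(q_{ij}+\alpha_jp_{ij}).
\]
The cross terms $\alpha_iq_{ij}p_{ij}$, $\alpha_jq_{ji}p_{ij}$ and $\alpha_i\alpha_jp_{ij}^2$ cancel pairwise, leaving $r_{ij}p_{ij}-q_{ji}q_{ij}$.

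\textbf{Step 3.} For $V_{ijk}$, the denominator $p_{jk}$ is invariant, so only the numerator $q_{ji}p_{ik}-q_{ki}p_{ij}$ needs checking. It maps to
\[
(q_{ji}+\alpha_ip_{ji})p_{ik}-(q_{ki}+\alpha_ip_{ki})p_{ij}.
\]
The extra term is $\alpha_i(p_{ji}p_{ik}-p_{ki}p_{ij})$, which vanishes by symmetry of $p$. Hence every $V_{ijk}$ is invariant, and in particular each $\mathcal{V}_{ij}=V_{i,i+1,j}$ is.

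There is no real obstacle here. The only point needing care is well-definedness: $V_{ijk}$ is a fraction, so one must observe that $p_{jk}$ is not a zero divisor in $\mathcal{R}$. It is nonzero at a general point of the irreducible variety $X$, which was used already for Corollary~\ref{cor: GenOrbit}, so $V_{ijk}$ lies in $\mathcal{F}$. The equality $f(\alpha\cdot x)=f(x)$ then holds on the dense open set where $p_{jk}\neq0$.
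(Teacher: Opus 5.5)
Your proposal is correct and follows the paper's proof. The paper likewise applies the transformation rules of Proposition~\ref{prop:transformation-rules} together with $p_{ij}=p_{ji}$ and asserts that the $\alpha$-dependent terms cancel; you carry out that cancellation explicitly, for all $V_{i,jk}$ and not only the $\mathcal{V}_{ij}$, and you make explicit what the paper leaves implicit, namely that the identities already hold in $S[p_{jk}^{-1}]$ and therefore descend to $\mathcal{F}$ via Lemma~\ref{lem: well-defined}.
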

\begin{proof}
 Invariance of the $P_{ij}$ is immediate. Using $p_{ij}=p_{ji}$ and the transformation rules of
Proposition~\ref{prop:transformation-rules}, the $\alpha$‑dependent terms in
$H_{ij}$ and $\mathcal{V}_{ij}$ cancel identically, which can be verified by direct computation, thus proving invariance of the $H_{ij}$ and $\mathcal{V}_{ij}$.
\end{proof}
\begin{theorem}\label{thm: main}
The field of rational invariants of the action induced by
\eqref{eq:transact} on $\mathrm{Spec}(\mathcal{R})$ is
\[
  \mathcal{F}^{G}
  \;=\;
  \K\Bigl(
    P_{ij}\;\,(i<j),\;\,
    H_{ij}\;\,(i<j),\;\,
    \mathcal{V}_{ij}\;\,(i,j\in[n],\;j\neq i,i+1)
  \Bigr).
\]
\end{theorem}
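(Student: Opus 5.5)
Inclusion ``$\supseteq$'' is Proposition~\ref{prop: inv}, so the content is the reverse inclusion. Rather than counting transcendence degrees (which only gives algebraicity), I would use a slice (cross-section) argument. The aim is to exhibit the generators of $\mathcal{F}$ as rational functions of the building blocks together with $n$ auxiliary functions $\beta_i$ that transform as $\beta_i\mapsto\beta_i+\alpha_i$. Concretely, in $\mathcal{F}$ set
\[
\beta_i := -\,\frac{q_{i+1,i}}{p_{i,i+1}},
\]
so that $\alpha\cdot\beta_i=\beta_i-\alpha_i$ by Proposition~\ref{prop:transformation-rules}. Define the ``gauge-fixed'' quantities $\tilde q_{ji}:=q_{ji}+\beta_i p_{ji}$ and $\tilde r_{ij}:=r_{ij}+\beta_i q_{ij}+\beta_j q_{ji}+\beta_i\beta_j p_{ij}$. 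These are exactly the values of $q,r$ at the point $\beta\cdot x$, hence are $G$-invariant. They are also rational functions of the scalar products, and $\tilde q_{i+1,i}=0$.

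\textbf{Step 1.} Any $f\in\mathcal{F}^G$ satisfies $f(x)=f(\beta(x)\cdot x)$. The point $\beta(x)\cdot x$ has coordinates $p_{ij}$, $\tilde q_{ij}$, $\tilde r_{ij}$, so $f\in\K(p_{ij},\tilde q_{ij},\tilde r_{ij})$. Well-definedness on $X$ uses Lemma~\ref{lem: well-defined}: the substitution preserves $I_{\rm Gram}$, and the denominators $p_{i,i+1}$ are nonzero on $X$ by the generic freeness remark. This is the standard Rosenlicht/slice reasoning; one could alternatively phrase it as $\mathcal{F}=\mathcal{F}^G(\beta_1,\dots,\beta_n)$ with the $\beta_i$ algebraically independent over $\mathcal{F}^G$, using Theorem~\ref{thm: Rosenlicht} and Corollary~\ref{cor: GenOrbit}.

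\textbf{Step 2.} Express $\tilde q$ and $\tilde r$ through the building blocks. By definition,
\[
\mathcal{V}_{ij}=V_{i,i+1,j}=\frac{q_{i+1,i}p_{ij}-q_{ji}p_{i,i+1}}{p_{i+1,j}}=-\frac{p_{i,i+1}}{p_{i+1,j}}\,\tilde q_{ji},
\]
a direct rearrangement. Hence $\tilde q_{ji}=-\mathcal{V}_{ij}\,P_{i+1,j}/P_{i,i+1}$ for $j\neq i,i+1$, and $\tilde q_{i+1,i}=0$. For $\tilde r$, note that $H_{ij}$ is invariant, so evaluating it at the gauge-fixed point gives
\[
H_{ij}=-2(\tilde r_{ij}p_{ij}-\tilde q_{ji}\tilde q_{ij}),
\]
hence
\[
\tilde r_{ij}=\frac{\tilde q_{ij}\tilde q_{ji}-H_{ij}/2}{P_{ij}}\in\K(P,H,\mathcal{V}).
\]
Combining with Step 1 yields $\mathcal{F}^G\subseteq\K(P_{ij},H_{ij},\mathcal{V}_{ij})$.

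\textbf{Main obstacle.} The delicate point is making Step 1 rigorous on the quotient variety $X$ rather than on the free polynomial ring. One must check that the map $x\mapsto\beta(x)\cdot x$ is a rational map $X\dashrightarrow X$ with dense domain, and that for an invariant rational function $f$ the identity $f=f\circ(\beta\cdot)$ holds in $\mathcal{F}$. Invariance gives $f(\alpha\cdot x)=f(x)$ for all constant $\alpha$. To pass to the $x$-dependent $\alpha=\beta(x)$, I would argue on the dense open set where $f$, $\beta$ and $f\circ\beta$ are defined, pointwise: for each such $x$, apply invariance with the constant $\alpha=\beta(x)$. This requires $\beta(x)\cdot x$ to stay in the domain of $f$, which I would arrange by shrinking to the $G$-stable open set where the denominators of $f$ are nonvanishing on the whole orbit. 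A small irreducibility point about $X$ (primality of $I_{\rm Gram}$, as used implicitly in \eqref{eq: trdegF}) is also needed for $\mathcal{F}$ to be a field; I would take it from \cite{MSS}.
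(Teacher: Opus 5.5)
Your proof is correct. It reaches the conclusion by a different mechanism than the paper, although the underlying computation is the same.

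\textbf{How the two routes differ.} The paper proves that the building blocks separate $G$-orbits in general position. If $P_{ij}$, $H_{ij}$ and $\mathcal{V}_{ij}$ agree at $x$ and $x'$, then $x'=\alpha\cdot x$ with $\alpha_i=(q'_{i+1,i}-q_{i+1,i})/p_{i,i+1}$. The paper then passes the conclusion to Rosenlicht's theorem, used in its characteristic-zero form: rational invariants that separate general orbits generate $\K(X)^G$. You instead construct a rational cross-section $\{q_{i+1,i}=0\}$ via $\beta_i$ and pull every invariant back along $x\mapsto\beta(x)\cdot x$. The paper's $\alpha_i$ is exactly $\beta_i(x)-\beta_i(x')$, and both arguments use $\mathcal{V}$ to fix $q_{ji}$ up to $\alpha_i p_{ij}$ and $H$ to fix $r_{ij}$.

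\textbf{What each route buys.} Your version gives explicit rewriting formulas for any invariant, namely $\tilde q_{ji}=-\mathcal{V}_{ij}P_{i+1,j}/P_{i,i+1}$ and $\tilde r_{ij}=(\tilde q_{ij}\tilde q_{ji}-H_{ij}/2)/P_{ij}$. It avoids the separation-implies-generation lemma. It also yields the structural fact that $\mathcal{F}=\mathcal{F}^G(\beta_1,\dots,\beta_n)$ is purely transcendental over $\mathcal{F}^G$, which gives Corollary~\ref{cor: trdegF} without the dimension formula. The paper's route is shorter: a pointwise orbit check followed by a general theorem.

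\textbf{A caution on your main obstacle.} Your fix works only if ``the denominators of $f$'' means the full domain of definition of $f$. That domain is $G$-stable precisely because $f$ is invariant. After using it, you must choose a fraction $a/b$ whose denominator does not vanish identically on the slice.

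It does not work with one fixed fraction $f=a/b$, because the set where $b$ is nonzero along whole orbits can be empty. For example, write $\mathcal{V}_{13}$ with an extra factor $q_{21}$ in numerator and denominator; $q_{21}$ vanishes on every orbit meeting $p_{12}\neq 0$.

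\textbf{A cleaner algebraic version.} The alternative you mention avoids all domain bookkeeping and Rosenlicht's theorem.
\begin{itemize}
\item Let $L=\K(p,\tilde q,\tilde r)$, which equals $\K(P_{ij},H_{ij},\mathcal{V}_{ij})$ by your Step 2.
\item Inverting $q_{ji}=\tilde q_{ji}-\beta_i p_{ji}$ and $r_{ij}=\tilde r_{ij}-\beta_i q_{ij}-\beta_j q_{ji}-\beta_i\beta_j p_{ij}$ gives $\mathcal{F}=L(\beta)$.
\item Suppose $C\in\mathcal{F}^G[y]$ satisfies $C(\beta)=0$. Applying the automorphisms $\phi_\alpha$ gives $C(\beta-\alpha)=0$ for all $\alpha\in\K^n$. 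Since $\K$ is infinite, $C(\beta-y)=0$ in $\mathcal{F}[y]$, so $C=0$, and the $\beta_i$ are algebraically independent over $\mathcal{F}^G$.
\item For $f\in\mathcal{F}^G$, write $f=A(\beta)/B(\beta)$ with $A,B\in L[y]$. Comparing coefficients in $A(\beta)=fB(\beta)$ gives $f=A_\gamma/B_\gamma\in L$ for any $\gamma$ with $B_\gamma\neq 0$.
\end{itemize}
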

\begin{proof}
Invariance of the generators was established in
Proposition~\ref{prop: inv}, so the right‑hand side is contained in
$\mathcal{F}^G$. It therefore suffices to show that these invariants separate
general $G$‑orbits. In other words, to show that for any $x,x' \in \mathrm{Spec}(\mathcal R)$ and all admissible indices
\begin{equation}\label{eq: orbitseparators}
   P_{ij}(x')=P_{ij}(x),
  \quad
  H_{ij}(x')=H_{ij}(x),
  \quad
  \mathcal{V}_{ij}(x')=\mathcal{V}_{ij}(x),  
\end{equation}
if and only if  $x'=\alpha\cdot x$ for a unique $\alpha\in \mathbb{G}_a^n$. 
We consider equations~\eqref{eq: orbitseparators} on the dense open subset of $\mathrm{Spec}(\mathcal R)$ where the Gram matrix has maximal rank and the denominators of $\mathcal V_{ij}$ do not vanish.
On this
locus, the relations \eqref{eq: orbitseparators} are not implied by the Gram ideal.
In addition, since $P_{ij}=p_{ij}$, the equality $P_{ij}(x)=P_{ij}(x')$
implies that
\begin{equation}\label{eqn: pij equal}
p'_{ij}=p_{ij} \quad \text{for all} \quad  \;  i\neq j.
\end{equation}
Assuming \eqref{eqn: pij equal}, the equality $\mathcal V_{ij}(x')=\mathcal V_{ij}(x)$ then yields
$
  p_{ij}(q'_{i+1,i}-q_{i+1,i})
  =
  p_{i,i+1}(q'_{ji}-q_{ji})
$ for $j\neq i,i+1$.
That is, the vector
$(q'_{ji}-q_{ji})_{j\neq i}$ is proportional to $(p_{ij})_{j\neq i}$. Setting
\begin{equation*}\label{eq:alphai}
  \alpha_i := \frac{q'_{i+1, i}-q_{i+1, i}}{p_{i,i+1}},
\end{equation*}
it follows that
$
 q'_{ji}=q_{ji}+\alpha_i\,p_{ij}
$. Finally, substituting these expressions into the relations
$H_{ij}(x')=H_{ij}(x)$ gives
$$
  r'_{ij}
  =
  r_{ij}
  + \alpha_i q_{ij}
  + \alpha_j q_{ji}
  + \alpha_i\alpha_j p_{ij},
$$
Thus, by Proposition \ref{prop:transformation-rules} we have $x'=\alpha\cdot x$ for a unique $\alpha\in\mathbb G_a^n$, and the building blocks $P_{ij}, H_{ij}$ and $\mathcal{V}_{ij}$ separate general orbits. The claim now follows from Theorem~\ref{thm: Rosenlicht}.
\end{proof}

As a corollary of Rosenlicht’s theorem, we obtain the dimension of the variety
parametrized by the basic building blocks $P_{ij}$, $H_{ij}$, and $\mathcal{V}_{ij}$. This variety was also studied in~\cite{MSS}.
\begin{corollary}\label{cor: trdegF}
Combining \eqref{eq: trdegF}, \eqref{eq: trdegInvField}, and
Corollary~\ref{cor: GenOrbit}, for $m\ge 5$ we obtain
\begin{equation*}
   \mathrm{trdeg}_\K\bigl(\mathcal{F}^{G}\bigr)
=
   \dim \mathcal{R} - \max_{p \in \mathrm{Spec}(\mathcal{R})}\dim(G \cdot p) 
=\mathrm{trdeg}_\K\bigl(\mathcal{F}\bigr) - n
  =\begin{cases}
      n(2m-4)-{m \choose 2}, & m<2n\\
      n(2n-3), & m\geq 2n.
  \end{cases} 
\end{equation*}
\end{corollary}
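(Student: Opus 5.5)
The plan is to chain the three ingredients named in the statement. First, apply Rosenlicht's formula \eqref{eq: trdegInvField} with $X=\mathrm{Spec}(\mathcal{R})$. This requires $X$ to be irreducible. I would take this from \cite{MSS}: the Gram ideal restricted to the locus \eqref{eq:null-cone} is prime for $m\ge 5$, and this is implicit in \eqref{eq: trdegF}, where $\mathcal{F}=\mathrm{Frac}(\mathcal{R})$ is treated as a field. Rosenlicht's formula then gives $\mathrm{trdeg}_\K(\mathcal{F}^G)=\dim X-\max_x\dim(G\cdot x)$. Since $\mathcal{F}^G=\K(X)^G$ and $\dim X=\operatorname{trdeg}_\K\mathcal{F}$, the first term is supplied by \eqref{eq: trdegF}.

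Second, I need $\max_x\dim(G\cdot x)=n$. Every orbit is the image of $\mathbb{G}_a^n$, so its dimension is at most $n$. Corollary~\ref{cor: GenOrbit} shows that a general orbit has dimension exactly $n$. I would double-check the genericity input, namely that a general point of $X$ has all $p_{ij}\neq 0$. This holds because the $p_{ij}$ are not identically zero on $X$: take $n$ generic null vectors in $\K^m$ together with suitable transverse null $Z_i$, which exist for $m\ge 3$. Then, exactly as in Proposition~\ref{prop: freeaction}, the rule $\alpha\cdot q_{ij}=q_{ij}+\alpha_j p_{ij}$ shows that the stabilizer of such a point is trivial. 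Indeed, for each $j$ pick some $i\neq j$; then $\alpha_j p_{ij}=0$ forces $\alpha_j=0$. This needs $n\ge 2$, which is assumed.

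Third, the remaining step is arithmetic. Subtracting $n$ gives $n(2m-3)-\binom m2-n=n(2m-4)-\binom m2$ when $m<2n$, and $n(2n-2)-n=n(2n-3)$ when $m\ge 2n$. As a consistency check, in the case $m\ge 2n$ the value $n(2n-3)$ equals the number of generators $P_{ij},H_{ij},\mathcal{V}_{ij}$ in Theorem~\ref{thm: main}, namely $\binom n2+\binom n2+n(n-2)$. So these generators are algebraically independent in that range, as expected.

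The only real obstacle is the irreducibility of $X$ and the applicability of \eqref{eq: trdegF}, which I take from \cite{MSS} rather than reprove. Everything else follows directly from results already stated in the paper.
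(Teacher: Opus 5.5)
Your proposal is correct and matches the paper's argument: the corollary comes from plugging \eqref{eq: trdegF} into Rosenlicht's formula \eqref{eq: trdegInvField}, using the generic orbit dimension $n$ from Corollary~\ref{cor: GenOrbit}, and your arithmetic in both cases is right. Your extra checks make explicit what the paper leaves implicit: irreducibility of $X$ taken from \cite{MSS}, the bound $\dim(G\cdot x)\le n$, and trivial stabilizers wherever the $p_{ij}$ are nonzero.
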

\noindent Recall that $\mathrm{trdeg}_\K\bigl(\mathcal{F}^{G}\bigr)$ is the number of algebraically independent $P_{ij}$, $H_{ij}$ and $\mathcal{V}_{ij}$ over $\K$.

\section{Conformal $n$-point functions}\label{sec:ConfNpoint}

As described in Section \ref{sec: preliminaries}, an $n$-point conformal correlator $G_{\mathbf{s},\mathbf{\Delta}}(P_1,Z_1,\ldots,P_n,Z_n)$ with spins $\mathbf{s}=(s_1,\ldots,s_n)\in\mathbb{Z}_{\ge0}^n$ and scaling dimensions $\mathbf{\Delta}=(\Delta_1,\ldots,\Delta_n)\in\mathbb{C}^n$ must satisfy Lorentz invariance \eqref{eq: Lorentz inv}, transversality \eqref{eq: Transv inv}, polynomiality in the entries of $Z_i$, homogeneity~\eqref{eq: homogeneity preliminaries}, null-cone and polarisation constraints \eqref{eq: traceless}. Starting from the perspective that $G_{\mathbf{s},\mathbf{\Delta}}$ could be an arbitrary function of the $P_i$ and $Z_i$, these constraints put strong restrictions on the form that $G_{\mathbf{s},\mathbf{\Delta}}$ can take. 
The goal of this section is to describe the most general form of $G_{\mathbf{s},\mathbf{\Delta}}$ allowed by the constraints~\eqref{eq: Lorentz inv}-\eqref{eq: traceless}. To do this, we first focus on the case of rational functions that satisfy \eqref{eq: Lorentz inv}-\eqref{eq: traceless}. We call such rational functions $F_{\mathbf{s},\mathbf{\Delta}}(P_1,Z_1,\ldots,P_n,Z_n)$, and the construction of their most general form is the result of Theorem~\ref{thm:sec5}. We then comment on how this result constrains the most general form of $n$-point conformal correlators $G_{\mathbf{s},\mathbf{\Delta}}$ in Remark~\ref{rem:sec5generalcase}.
This section is mostly expository, and we review and prove the results of \cite[Section 4.4]{CPPR}.

\medskip

To formulate the main result of this section, we will need the notion of a \textit{cross-ratio}. For a quadruple of distinct indices $\{i,j,k,l\}\subseteq [n]$, the corresponding cross-ratio is defined as 
$$u_{ijkl}(P):=\dfrac{P_{ij}P_{kl}}{P_{ik}P_{jl}}.$$

\begin{remark}
    We emphasize that each $P_{ij}$ is of the form:
    \begin{equation*}
        P_{ij}=P_i\cdot P_j=-P_i^0P_j^0+P_i^1P_j^1+\cdots+P_i^{m-1}P_j^{m-1},
    \end{equation*}
    and so these cross-ratios differ from the cross-ratios on the line $\mathbb{P}^1$ (see e.g. \cite[Section 1]{lammoduli}).
\end{remark}

Theorem \ref{thm: main} indicates that Lorentz invariance, transversality, null-cone constraints and tracelessness mean that each rational function $F_{\mathbf{s},\mathbf{\Delta}}$ satisfying~\eqref{eq: Lorentz inv}-\eqref{eq: traceless} is a function of the variables $P_{ij}$, $H_{ij}$ and $\mathcal{V}_{ij}$. The polynomiality constraint is satisfied by requiring that $F_{\mathbf{s},\mathbf{\Delta}}$ be polynomial in the $H_{ij}$ and $\mathcal{V}_{ij}$. 
The homogeneity constraint \eqref{eq: homogeneity preliminaries} imposes that the terms of this polynomial have a certain fixed multidegree and, as we show in the theorem below, also fixes the dependence on the variables $P_{ij}$.
We can now conclude that the general form of $F_{\mathbf{s},\mathbf{\Delta}}$ is an element of the ring $\mathbb{C}(P_{ij})[H_{ij},\mathcal{V}_{ij}]$ and is thus a sum of monomial terms
\begin{equation}\label{eqn: general F}
    F_{\mathbf{s},\mathbf{\Delta}}(P_1,Z_1,\ldots,P_n,Z_n)=\sum_{k}B_k(P_{ij},H_{ij},\mathcal{V}_{ij})\,,
\end{equation}
where each $B_k$ satisfies the homogeneity property \eqref{eq: homogeneity preliminaries}.

\medskip

We are now ready to formulate and prove the main result of this section.

\begin{theorem} \label{thm:sec5}
    Let $\mathbb{C}(\mathbf{u})$ be the field of rational functions in the cross-ratios $u_{ijkl}(P)$ for all quadruples $\{i,j,k,l\}$.
    Endow the polynomial ring $\mathbb{C}(\mathbf{u})[H_{ij},\mathcal{V}_{ij}]$ with the multigrading given by \eqref{eq: multidegrees VH}. Each rational function $F_{\mathbf{s},\mathbf{\Delta}}$ that satisfies~\eqref{eq: Lorentz inv}-\eqref{eq: traceless} with spins $\mathbf{s}=(s_1,\ldots,s_n)$ and scaling dimensions $\mathbf{\Delta}=(\Delta_1,\ldots,\Delta_n)$ can be written in the form:
    \begin{equation} \label{eq:TheoremSec5}
    \begin{split}
        F_{\mathbf{s},\mathbf{\Delta}}(P_1,Z_1,\ldots,&P_n,Z_n)=\left[\prod_{i<j} P_{ij}^{-c_{ij}}\right]A(\mathbf{u},H_{ij},\mathcal{V}_{ij})\,,\\
        c_{ij}&= \dfrac{\tau_i+\tau_j}{n-2}-\dfrac{1}{(n-1)(n-2)}\sum\limits_{k=1}^n \tau_k\,,
    \end{split}
    \end{equation}
    where each $A\in \mathbb{C}(\mathbf{u})[H_{ij},\mathcal{V}_{ij}]$ is a homogeneous polynomial of multidegree $\mathbf{s}$, and $\tau_i=\Delta_i+s_i$.
\end{theorem}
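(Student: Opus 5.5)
By Theorem~\ref{thm: main}, $F_{\mathbf{s},\mathbf{\Delta}}$ lies in $\mathcal{F}^G=\mathbb{C}(P_{ij},H_{ij},\mathcal{V}_{ij})$, so it can be written as a rational function of the building blocks. The plan has three steps. First, use polynomiality in the $Z_i$ to see that $F$ is polynomial in $H_{ij},\mathcal{V}_{ij}$ with coefficients in $\mathbb{C}(P_{ij})$. Second, use the two scaling symmetries in \eqref{eq: homogeneity preliminaries} to read off the multidegree in $(H,\mathcal{V})$ and the $P$-weight of each coefficient. Third, show that a rational function of the $P_{ij}$ with prescribed weights factors as a fixed monomial times a function of the cross-ratios.

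For the first step, I would work over $\mathbb{C}(P_{ij})$ and pass to the dense open locus used in the proof of Theorem~\ref{thm: main}. On that locus the $\mathcal{V}_{ij}$ and $H_{ij}$ are affine-linear, resp.\ bilinear, in the $q$'s and $r$'s. I would argue by the following degree consideration. The $Z$-dependence of $F$ enters only through $q_{ij}$ and $r_{ij}$. After clearing a denominator $D(P_{ij},H,\mathcal{V})$, if $D$ genuinely involved $H$ or $\mathcal{V}$, then $D$ would vanish on some $Z$-dependent hypersurface, contradicting polynomiality in $Z$. To make this rigorous I would use a slice: the $Z$-coordinates modulo the $G$-action are parametrised by $\mathcal{V}_{ij}$ and $H_{ij}$ over $\mathbb{C}(P)$. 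Then a function polynomial in $Z$ is polynomial along the slice, so $F\in\mathbb{C}(P_{ij})[H_{ij},\mathcal{V}_{ij}]$, as already asserted in \eqref{eqn: general F}.

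For the second step, I scale $Z_k\mapsto\mu_k Z_k$. This scales $\mathcal{V}_{kj}$ by $\mu_k$, scales $H_{ij}$ by $\mu_i\mu_j$, and fixes $P_{ij}$. Homogeneity of degree $s_k$ in $Z_k$ therefore forces each $(H,\mathcal{V})$-monomial occurring in $F$ to have multidegree $\mathbf{s}$ for the grading \eqref{eq: multidegrees VH}. Next I scale $P_k\mapsto\lambda_k P_k$. Then $P_{ij}$ picks up $\lambda_i\lambda_j$, $H_{ij}$ picks up $\lambda_i\lambda_j$, and $\mathcal{V}_{ij}=V_{i,i+1,j}$ picks up $\lambda_i$, since its numerator carries $\lambda_i\lambda_{i+1}\lambda_j$ and its denominator $\lambda_{i+1}\lambda_j$. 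Hence a monomial of multidegree $\mathbf{s}$ in $(H,\mathcal{V})$ scales as $\prod\lambda_k^{s_k}$. It follows that each coefficient $c(P)\in\mathbb{C}(P_{ij})$ must scale as $\prod_k\lambda_k^{-\Delta_k-s_k}=\prod_k\lambda_k^{-\tau_k}$.

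For the third step, I check that $\prod_{i<j}P_{ij}^{-c_{ij}}$ has weight $-\tau_k$ in $\lambda_k$. Its weight in $\lambda_k$ is $-\sum_{j\neq k}c_{kj}$, and a direct computation gives
\[
\sum_{j\ne k}c_{kj}=\frac{(n-1)\tau_k+(T-\tau_k)}{n-2}-\frac{T}{n-2}=\tau_k,\qquad T=\sum_{l}\tau_l.
\]
So the ratio $c(P)\prod_{i<j}P_{ij}^{c_{ij}}$ is a rational function of the $P_{ij}$ of weight zero under the torus $(\lambda_k)$. I then need that the field of weight-zero rational functions in the $P_{ij}$ equals $\mathbb{C}(\mathbf{u})$. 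The plan is to note that weight-zero Laurent monomials in the $P_{ij}$ form the lattice kernel of the incidence matrix $A$ of $K_n$. For $n\ge4$ this kernel is generated by the even-cycle moves $e_{ij}+e_{kl}-e_{ik}-e_{jl}$, which are exactly the cross-ratio exponents. Since a torus-invariant rational function is a ratio of semi-invariants of the same weight, this gives the claim.

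I expect this torus-invariant step to be the main obstacle. One must handle the fact that the $P_{ij}$ are not algebraically independent when $m<n$, because of the Gram relations. I would circumvent this by working with formal $P_{ij}$ in the sense of Remark~\ref{rem: functions as formal vars}: the representation only has to exist, and it then descends to the quotient. Two special cases also need treatment. For $n=3$ the cross-ratios are absent and the monomial prefactor is unique. For $n=2$ the formula for $c_{ij}$ degenerates and must be handled separately.
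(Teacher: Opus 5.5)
Your three steps are the paper's proof. Scaling $Z_k$ forces every $(H,\mathcal{V})$-monomial to have multidegree $\mathbf{s}$. Scaling $P_k$ forces each coefficient in $\mathbb{C}(P_{ij})$ to have weight $-\tau$. One then divides by $\prod_{i<j}P_{ij}^{-c_{ij}}$ and uses that the kernel of the incidence matrix of $K_n$ is built from cross-ratio exponent vectors. You are more careful than the paper in three places:
\begin{itemize}
\item The paper simply starts from $F\in\mathbb{C}(P_{ij})[H_{ij},\mathcal{V}_{ij}]$. Your gauge slice $q_{i+1,i}=0$, on which all $q_{ji}$ and $r_{ij}$ become polynomials in $H_{ij},\mathcal{V}_{ij}$ over $\mathbb{C}(p_{ij})$, is a sensible way to justify this.
\item Your ``ratio of semi-invariants of equal weight'' replaces the paper's summand-by-summand argument, which tacitly treats the denominator as a single monomial.
\item You correctly use that the $4$-cycle vectors generate the kernel as a lattice, not merely span it.
\end{itemize}

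There is, however, one step that fails as written, and the paper's proof fails at exactly the same point. The product $c(P)\prod_{i<j}P_{ij}^{c_{ij}}$ is a rational function only if every $c_{ij}$ is an integer, and for $n\ge 4$ the symmetric solution generally is not. Take $n=4$, $\mathbf{s}=0$, $\mathbf{\Delta}=(1,0,0,1)$ and $F=P_{14}^{-1}$. Then $c_{12}=c_{13}=c_{24}=c_{34}=\tfrac16$, $c_{14}=\tfrac23$ and $c_{23}=-\tfrac13$. This forces $A=(u_{1243}\,u_{1342})^{1/6}$, which is not in $\mathbb{C}(\mathbf{u})$, nor even in $\mathbb{C}(P_{ij})$, since the $P_{ij}$ are algebraically independent for $m\ge 5>n$. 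So the statement cannot be proved as literally written.

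What your argument does prove is the version in which $c_{ij}$ is replaced by any integral solution $\tilde c$ of $\sum_{j\ne i}\tilde c_{ij}=\tau_i$. Such a $\tilde c$ exists because the weight of a nonzero rational coefficient lies in the lattice generated by the vectors $\mathbf{e}_i+\mathbf{e}_j$. For $n\ge 3$ this lattice consists of the vectors with even coordinate sum, which is exactly the image of the incidence matrix. With $\tilde c$ in place of $c$, your lattice argument gives $c(P)\prod P_{ij}^{\tilde c_{ij}}\in\mathbb{C}(\mathbf{u})$ verbatim. Equivalently, if you keep the symmetric $c_{ij}$, you must allow fractional powers of cross-ratios in the coefficients, in the spirit of Remark~\ref{rem:sec5generalcase}. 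For $n=3$ nothing changes, since $c_{ij}=\tau_i+\tau_j-\tfrac12\sum_k\tau_k$ is integral once $\sum_k\tau_k$ is even, and that is forced.

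A smaller point: when $H_{ij}$ and $\mathcal{V}_{ij}$ are algebraically dependent over $\mathbb{C}(P_{ij})$, you cannot read off the weight of each coefficient from an arbitrary representation. You first need a representative whose occurring monomials are $\mathbb{C}(P_{ij})$-linearly independent; mere existence of some representation is not enough.
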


\begin{remark}\label{rmk: no cross ratios}
    In the case $n=3$, there are no cross-ratios as one cannot pick a quadruple of distinct indices. The field $\mathbb{C}(\mathbf{u})$ in this case is therefore just the field of constants $\mathbb{C}$ and the dependence on the cross-ratios in Theorem \ref{thm:sec5} disappears entirely.
\end{remark}

\begin{proof}
    We start from the assumption that $F_{\mathbf{s},\mathbf{\Delta}}$ is of the form written in equation \eqref{eqn: general F}. Consider first the constraints on the monomials $B_k\in\mathbb{C}(P_{ij})[H_{ij},\mathcal{V}_{ij}]$ imposed by the homogeneity condition \eqref{eq: homogeneity preliminaries}. Note that the variables $P_{ij}$, $H_{ij}$ and $\mathcal{V}_{ij}$ transform as follows under the scalings $P_i\mapsto\lambda_iP_j$ and $Z_i\mapsto\mu_iZ_i$ of all $P_i$ and $Z_i$:
    \begin{equation} \label{eq:scalingPHV}
    P_{ij} \mapsto\lambda_i \lambda_j\, P_{ij}, \quad H_{ij} \mapsto \lambda_i\lambda_j \mu_i \mu_j\, H _{ij}, \quad \mathcal{V}_{ij} \mapsto \lambda_i \mu_i\mathcal{V}_{ij}\,. 
    \end{equation}
    Imposing that the $B_k$ are all homogeneous in the $Z_i$ with degree dictated by \eqref{eq: homogeneity preliminaries} and thus equivalent implies that each of the $B_k$ be homogeneous of multidegree $\mathbf{s}$ in the grading \eqref{eq: multidegrees VH}. 
    Recall from Section \ref{sec:matching_pols} that for a given $\mathbf{s}$, there are a finite number of monomials in $\mathbb{C}[H_{ij},\mathcal{V}_{ij}]$ with this multidegree and so the number of $B_k$ is finite. 
    The dependence of each $B_k$ on the $P_{ij}$ is then constrained by requiring that the $B_k$ have the desired scaling properties under the transformation $P_i\mapsto\lambda_iP_i$. 
    We will now show that the dependence of each monomial $B_k$ on the variables $P_{ij}$ is the same up to cross-ratios.

    \medskip
    
    Consider the coefficient of a single monomial $B_k$ in the field $\mathbb{C}(P_{ij})$. 
    It is a rational function in the variables $P_{ij}$, that is, it takes the form:
    \begin{equation}\label{eqn: rational P}
    \dfrac{\sum\limits_{(a_{ij})\in\mathbb{Z}_{\geq 0}^L}\alpha_{(a_{ij})}\prod\limits_{1\leq i<j\leq n} P_{ij}^{a_{ij}}}{\sum\limits_{(b_{ij})\in\mathbb{Z}_{\geq 0}^L}\beta_{(b_{ij})}\prod\limits_{1\leq i<j\leq n} P_{ij}^{b_{ij}}},
    \end{equation}
    where $L=\binom{n}{2}$.
    Since $B_k$ is homogeneous of multidegree $\mathbf{s}$ in the entries of the $Z_i$, the homogeneity constraint \eqref{eq: homogeneity preliminaries} requires that the function \eqref{eqn: rational P} be homogeneous of multidegree $(-\Delta_1-s_1,\ldots,-\Delta_n-s_n)$ in the $P_i$. Thus, the summands of \eqref{eqn: rational P}, each of which has the form
    \begin{equation*}
        \dfrac{\alpha_{(a_{ij})}\prod\limits_{1\leq i<j\leq n} P_{ij}^{a_{ij}}}{\sum\limits_{(b_{ij})\in\mathbb{Z}_{\geq 0}^L}\beta_{(b_{ij})}\prod\limits_{1\leq i<j\leq n} P_{ij}^{b_{ij}}} = \dfrac{\alpha_{(a_{ij})}}{\sum\limits_{(b_{ij})\in\mathbb{Z}_{\geq 0}^L}\beta_{(b_{ij})}\prod\limits_{1\leq i<j\leq n} P_{ij}^{b_{ij}-a_{ij}}}
    \end{equation*}
    must be homogeneous of this multidegree. Defining $\tilde{c}_{ij}:=b_{ij}-a_{ij}$, this implies that the $\tilde{c}_{ij}$ must satisfy the following system of $n$ linear equations:
    \begin{equation*}
        \sum_{\substack{j=1\\j\neq i}}^n\tilde{c}_{ij}=\Delta_i+s_i\,, \quad i\in\{1,\ldots,n\}\,,
    \end{equation*}
    where we have adopted the labeling convention $\tilde{c}_{ij}=\tilde{c}_{ji}$. Writing $\tilde{\mathbf{c}}$ for the vector of variables $\tilde{c}_{ij}$ and defining $\tau_i:=\Delta_i+s_i$, we find that the linear system is of the form:
    \begin{equation*}
        B\tilde{\mathbf{c}}=(\tau_1,\ldots,\tau_n)^T,
    \end{equation*}
    where $B$ is the $n\times\binom{n}{2}$ vertex-edge incidence matrix of the undirected complete graph $K_n$. A particular solution of this system is given by the vector with entries
    \begin{equation*}
        c_{ij}= \dfrac{\tau_i+\tau_j}{n-2}-\dfrac{1}{(n-1)(n-2)}\sum\limits_{k=1}^n \tau_k.
    \end{equation*}
    The kernel of the matrix $B$ is spanned by the $\binom{n}{4}$ vectors $\tilde{\mathbf{c}}$, each having four non-zero entries: $c_{ij}=c_{kl}=1$ and $c_{il}=c_{jk}=-1$.
    This follows for instance from the results of \cite{toricideals_graphs} applied to the complete graph. Each such vector is the exponent vector of the corresponding cross-ratio~$u_{ijkl}$. This means that the coefficient of each monomial $B_k$ in $\mathbb{C}(P_{ij})$ is a uniquely defined Laurent monomial up to multiplication by an arbitrary rational function in the cross-ratios. We can thus remove the common factor $\prod_{i<j}P_{ij}^{-c_{ij}}$
    from each of the monomials $B_k$ to the front of the sum in \eqref{eqn: general F} to obtain an expression as in formula \eqref{eq:TheoremSec5}.
\end{proof}

\begin{remark} \label{rem:sec5generalcase}
    Note that the cross-ratios satisfy Lorentz invariance, transversality, null-cone and tracelessness constraints, and are invariant under respective rescalings of the $P_i$ and $Z_i$~\eqref{eq:scalingPHV}. 
    In view of these facts, the proof of Theorem \ref{thm:sec5} implies that even if one does not assume that the $n$-point function $G_{\mathbf{s},\mathbf{\Delta}}$ is rational, it can still be written in the form \eqref{eq:TheoremSec5} with $A$ being a polynomial in $H_{ij}$ and $\mathcal{V}_{ij}$ whose coefficients are \emph{arbitrary} functions of the cross-ratios. Following Remark~\ref{rmk: no cross ratios}, one can moreover conclude that three-point functions take exactly the form of $F_{\mathbf{s},\mathbf{\Delta}}$ in Theorem~\ref{thm:sec5} -- they are rational.
\end{remark}

Theorem \ref{thm:sec5} along with the results of Section \ref{sec:matching_pols} (and specifically Theorem \ref{thm: main sec 3}) allows us to compute an upper bound for the dimension of the $\mathbb{C}(\mathbf{u})$-vector space of rational functions $F_{\mathbf{s},\mathbf{\Delta}}$ for any value of $n\geq 3$ and any values of the spins $\mathbf{s}=(s_1,\ldots,s_n)$.
We present the results for $d=3$, $n=3,4$ and certain small values of the $s_i$ in Tables \ref{table:n3} and \ref{table:n4}. The code for this computation and a more comprehensive dataset is available at \cite{zenodo}. 
Note that since the monomials of a given multidegree may be linearly dependent for certain values of $d$, the actual dimension of this space can be lower than the number of homogeneous monomials of this multidegree. 
We illustrate this on a simple example and record the number of algebraically independent monomials for $n=d=3$ in the last row of Table \ref{table:n3}. 
We elaborate on how to obtain these numbers in the next section.

\begin{example} \label{ex:11to10}
    We consider the case $n=d=3$ and $\mathbf{s}=(2,2,2)$. Using Table~\ref{table:n3}, we see that there are $11$ monomials in the $H_{ij}$ and $\mathcal{V}_{ij}$ that have the right multidegree. However, for $n=d=3$ there is a single algebraic relation between the $H_{ij}$ and $\mathcal{V}_{ij}$ which shares this multidegree. This relation is given by
    \begin{equation}
        -2H_{12}H_{23}H_{13}=(\mathcal{V}_{13}H_{23}+\mathcal{V}_{21}H_{13}+\mathcal{V}_{32}H_{12}+2\mathcal{V}_{13}\mathcal{V}_{21}\mathcal{V}_{32})^2
    \end{equation}
    We thus find that the number of algebraically independent monomials with multidegree consistent with the choice $\mathbf{s}=(2,2,2)$ is $10$, not $11$. Note that this relation produces linear dependencies between monomials of multidegree $\tilde{\mathbf{s}}\geq \mathbf{s}$ (the inequality is component-wise).  
\end{example}

\begin{table}[h]
\centering
\small
\setlength{\tabcolsep}{3.5pt}
\renewcommand{\arraystretch}{1.2}

\begin{tabular}{|p{2cm}|c|c|c|c|c|c|c|c|c|c|c|c|c|c|c|c|c|}
\hline
Spins
& 001 & 011 & 111
& 002 & 012 & 112
& 003 & 013 & 113
& 023 & 123 & 222 & 223
& 033 & 133 & 233 & 333 \\
\hline
Monomials
& 1 & 2 & 4
& 1 & 2 & 5
& 1 & 2 & 5
& 3 & 8 &11 & 13
& 4 & 10 & 17 & 23 \\
\hline
Independent monomials& 1 & 2 & 4
& 1 & 2 & 5
& 1 & 2 & 5
& 3 & 8 & $\mathbf{10}$
& $\mathbf{12}$ & 4 & 10 & $\mathbf{15}$ & $\mathbf{19}$
\\
\hline
\end{tabular}

\caption{Number of monomials for given values of spins and $n=d=3$.} \label{table:n3}
\end{table}

\begin{table}[h]
\centering
\small
\setlength{\tabcolsep}{3.5pt} 
\renewcommand{\arraystretch}{1.2}
\begin{tabular}{|c|c|c|c|c|c|c|c|c|c|c|c|c|c|c|}
\hline
Spins 
& 0001 & 0011 & 0111 & 1111 
& 0002 & 0012 & 0112 & 1112 
& 0022 & 0122 & 1122 & 0222 & 1222 & 2222 \\
\hline
Monomials
& 2 & 5 & 14 & 43
& 3 & 8 & 24 & 78
& 14 & 44 & 150 & 85 & 302 & 633 \\
\hline
\end{tabular}
\caption{Number of monomials for given values of spins and $n=4$, $d=3$.}\label{table:n4}
\end{table}

Example \ref{ex:11to10} illustrates that it is helpful to understand the algebraic relations between the building blocks  $P_{ij}, H_{ij}$ and $\mathcal{V}_{ij}$. We discuss these relations in the next section.

\section{Algebraic relations between building blocks}\label{sec: alg rel}

In the previous section we have seen that any $n$-point function can be written as a homogeneous polynomial of multidegree $\mathbf{s}$ in the building blocks $H_{ij}$ and $\mathcal{V}_{ij}$ with coefficients depending on the cross-ratios $u_{ijkl}$. 
This gives an upper bound on the number of independent structures in the $n$-point function: one can just count the number of monomials of multidegree $\mathbf{s}$.
However, these monomials can still be dependent. 
The goal of this section is to investigate dependencies between them. In other words, we study algebraic relations between the basic building blocks $P_{ij}$, $H_{ij}$ and $\mathcal{V}_{ij}$. 

\medskip

As explained in the previous section, the motivation for studying these relations is that the monomial counts of Section~\ref{sec:matching_pols} are formal counts: they treat the building blocks as algebraically independent. In fixed spacetime dimension this need not be true, because Gram constraints among the vectors $P_i$ and $Z_i$ can produce algebraic relations among $P_{ij}$, $H_{ij}$, and $\mathcal V_{ij}$. These relations reduce the number of genuinely independent conformal structures and determine the coordinate ring whose Hilbert function gives the dimension-dependent count.

\medskip

The building blocks $P_{ij}$, $H_{ij}$ and $\mathcal{V}_{ij}$ indeed satisfy relations implied by the Gram constraints on the vectors $P_1,\ldots,P_n,Z_1,\ldots,Z_n$. 
These relations are the central topic of \cite{MSS}, and Example \ref{ex:11to10} treats the simplest case of this situation. 
Our setup differs slightly from that of \cite{MSS} in that we do not consider the full set of the variables $V_{i,jk}$ but only a subset consisting of $n(n-2)$ many  $\mathbb{C}(\mathbf{u})$-linearly independent variables $\mathcal{V}_{ij}$ suggested in \cite[Eqn. (4.75)]{CPPR}. 
Our choice of variables is more natural: in fact, the only relations between $P_{ij}, H_{ij}$ and $\mathcal{V}_{ij}$ are those implied by the Gram constraints, as the following proposition demonstrates. 

\begin{proposition}\label{prop:PHVrels}
    Let $m=d+2\ge 5$. If all the inner products $P_i\cdot P_j$, $P_i\cdot Z_j$ and $Z_i\cdot Z_j$ are $\mathbb{C}$-algebraically independent, i.e., if $m\geq 2n$, then so are all the building blocks $P_{ij}$, $H_{ij}$, $\mathcal{V}_{ij}$. 
\end{proposition}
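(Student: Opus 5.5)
The plan is a transcendence degree count rather than a direct search for relations. When $m\ge 2n$ the Gram matrix \eqref{eq: gram matrix} has size $2n\le m$, so $I_{\mathrm{Gram}}=0$. Then $\mathcal{R}=S$ is a polynomial ring in the $n(2n-2)$ variables $p_{ij},q_{ij},r_{ij}$, and $\mathcal{F}=\K(p_{ij},q_{ij},r_{ij})$ is purely transcendental of degree $n(2n-2)$, consistent with \eqref{eq: trdegF}. By Theorem~\ref{thm: main}, the field $\mathcal{F}^G$ is generated by the $\binom n2+\binom n2+n(n-2)=n(2n-3)$ functions $P_{ij},H_{ij},\mathcal{V}_{ij}$. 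By Corollary~\ref{cor: trdegF}, in the range $m\ge 2n$ this field has transcendence degree exactly $n(2n-3)$. A field generated over $\K$ by $N$ elements has transcendence degree $N$ only if those generators are algebraically independent, since any generating set contains a transcendence basis. Hence the $n(2n-3)$ building blocks are algebraically independent over $\K=\C$.

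The key steps, in order, are as follows.
\begin{enumerate}
\item Note that $I_{\mathrm{Gram}}=0$ when $2n\le m$. There are no $(m+1)\times(m+1)$ minors of a $2n\times 2n$ matrix, and Weyl's second fundamental theorem applies.
\item Count the generators exactly: $\binom n2$ blocks $P_{ij}$ with $i<j$, $\binom n2$ blocks $H_{ij}$ with $i<j$, and $n(n-2)$ blocks $\mathcal{V}_{ij}$. Their total is $n(2n-3)$.
\item Invoke Theorem~\ref{thm: main} and Corollary~\ref{cor: trdegF}.
\item Conclude with the standard fact that a finite generating set of a finitely generated field extension contains a transcendence basis.
\end{enumerate}

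The main point needing care is the input to Corollary~\ref{cor: trdegF}. It requires the generic orbit dimension to be exactly $n$, which is Corollary~\ref{cor: GenOrbit}. That in turn requires that a general point of $X=\mathbb{A}^{n(2n-2)}$ has all $p_{ij}\neq 0$, which is clear here because $X$ is affine space. One should also check that Rosenlicht's formula is applied on the irreducible variety $X$, which again holds since $X$ is affine space.

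As a sanity check independent of Rosenlicht, I would also give a direct argument. Treat $p_{ij}$, $q_{ij}$ and $r_{ij}$ as free variables. Then $P_{ij}=p_{ij}$. Each $\mathcal{V}_{ij}$ is linear in $q_{ji}$ with coefficient $p_{i,i+1}/p_{i+1,j}\neq 0$, and it involves otherwise only $q_{i+1,i}$. Each $H_{ij}$ contains $r_{ij}$ linearly with coefficient $-2p_{ij}$.

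Now specialise the $n$ variables $q_{i+1,i}$ to $0$. This amounts to choosing a slice of the $G$-action, since $\alpha_i$ can be used to kill $q_{i+1,i}$. The Jacobian of the map $(p,q_{ji}\ (j\neq i+1),r)\mapsto(P,\mathcal{V},H)$ is then triangular with nonzero diagonal. Since the number of source variables is $n(2n-2)-n=n(2n-3)$, this map is dominant. Dominance onto affine space of dimension $n(2n-3)$ is exactly algebraic independence of the $P_{ij},H_{ij},\mathcal{V}_{ij}$. I expect this Jacobian bookkeeping to be the only mildly fiddly step.
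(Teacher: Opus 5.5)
Your main argument is the same as the paper's: since $I_{\mathrm{Gram}}=0$ for $m\ge 2n$, Theorem~\ref{thm: main} and Corollary~\ref{cor: trdegF} give $\mathrm{trdeg}_{\C}\mathcal{F}^G=n(2n-2)-n=n(2n-3)$, which equals the number of generators $P_{ij},H_{ij},\mathcal{V}_{ij}$ and therefore forces their algebraic independence. Your Jacobian check is also correct and gives a self-contained proof that needs neither the generation statement of Theorem~\ref{thm: main} nor Rosenlicht's theorem: on the slice $q_{i+1,i}=0$ the Jacobian is block-triangular, pairing $P\leftrightarrow p$, $\mathcal{V}_{ij}\leftrightarrow q_{ji}$ and $H_{ij}\leftrightarrow r_{ij}$ (the coefficient of $q_{ji}$ in $\mathcal{V}_{ij}$ is in fact $-p_{i,i+1}/p_{i+1,j}$, which makes no difference).
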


\begin{proof}
    If the inner products are algebraically independent, which by a result of Weyl is the case exactly when $m\geq 2n$, then by Corollary~\ref{cor: trdegF}, the transcendence degree of the field generated by the building blocks, and thus, the dimension of the corresponding variety, is equal to the number of non-zero inner products minus $n$. 
    The number of inner products is $2\binom{n}{2}+n(n-1)$. Subtracting $n$, we get $n(2n-3)$. 
    This is exactly the number of building blocks $P_{ij}, H_{ij}, \mathcal{V}_{ij}$, which means they are algebraically independent.    
\end{proof}

Our next result generalises the second statement in \cite[Lemma 3.7]{MSS}.
It allows to rewrite any relation between $P_{ij}$, $H_{ij}$ and $V_{i,jk}$ in terms of $P_{ij}$, $H_{ij}$ and~$\mathcal{V}_{ij}$.

\begin{lemma} \label{lem:linrels_V}
For any set of indices $i,j,k,l$ the following linear relation holds for the variables $V_{i,jk}$ for arbitrary $d$: 
$$V_{i,jk}P_{il}P_{jk}-V_{i,jl}P_{ik}P_{jl}+V_{i,kl}P_{ij}P_{kl}=0.$$
In particular, setting $j=i+1$ one gets 
$$\dfrac{P_{ik}P_{i+1,l}}{P_{i,i+1}P_{kl}}\mathcal{V}_{il}-\dfrac{P_{il}P_{i+1,k}}{P_{i,i+1}P_{kl}}\mathcal{V}_{ik} = V_{i,kl}.$$    
\end{lemma}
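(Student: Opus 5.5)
The identity is a direct algebraic check, so the plan is to clear denominators and expand in the scalar products $p_{ab}=P_a\cdot P_b$ and $q_{ab}=P_a\cdot Z_b$. By \eqref{eq: confblocks} we have $V_{i,jk}P_{jk}=q_{ji}p_{ik}-q_{ki}p_{ij}$. Multiplying each $V$ by the $P$ that carries its own pair of indices therefore removes every denominator. Only products of $p$'s and $q$'s remain, and no Gram relation is used. This is why the identity holds for arbitrary $d$.

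Explicitly, I would write
\begin{align*}
V_{i,jk}P_{jk}P_{il}&=(q_{ji}p_{ik}-q_{ki}p_{ij})\,p_{il},\\
V_{i,jl}P_{jl}P_{ik}&=(q_{ji}p_{il}-q_{li}p_{ij})\,p_{ik},\\
V_{i,kl}P_{kl}P_{ij}&=(q_{ki}p_{il}-q_{li}p_{ik})\,p_{ij}.
\end{align*}
Forming the alternating sum (first minus second plus third), I would group terms by the three $q$'s. The $q_{ji}$ terms are $p_{ik}p_{il}-p_{il}p_{ik}=0$. The $q_{ki}$ terms are $-p_{ij}p_{il}+p_{il}p_{ij}=0$. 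The $q_{li}$ terms are $p_{ij}p_{ik}-p_{ik}p_{ij}=0$. So the first relation holds identically in $S$, and hence in $\mathcal F$, wherever the denominators are nonzero.

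For the second statement, I would set $j=i+1$. Then $V_{i,i+1,k}=\mathcal V_{ik}$ and $V_{i,i+1,l}=\mathcal V_{il}$. Solving the relation for $V_{i,kl}$ and dividing by $P_{i,i+1}P_{kl}$ gives
\[
V_{i,kl}=\frac{P_{ik}P_{i+1,l}}{P_{i,i+1}P_{kl}}\mathcal V_{il}-\frac{P_{il}P_{i+1,k}}{P_{i,i+1}P_{kl}}\mathcal V_{ik},
\]
which is the stated formula.

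There is no real obstacle. The only care needed is with sign conventions and the index order in $V_{i,jk}$. In particular, $q_{ji}=Z_i\cdot P_j$ is the right reading of the numerator $(Z_i\cdot P_j)(P_i\cdot P_k)-(Z_i\cdot P_k)(P_i\cdot P_j)$. One should also note that the cases $k=i+1$ or $l=i+1$ are consistent, since $V_{i,jj}$ does not arise.
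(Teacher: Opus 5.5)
Your proof is correct and is exactly the paper's argument. The paper also substitutes the parametrization \eqref{eq: confblocks}, and your write-up adds the explicit check: clear the denominators, see that the $q_{ji}$, $q_{ki}$, $q_{li}$ terms cancel pairwise, then set $j=i+1$ and solve for $V_{i,kl}$. Drop only your closing aside about $k=i+1$ or $l=i+1$: the lemma concerns distinct indices, and in that degenerate case $V_{i,i+1,i+1}$ would in fact appear with a vanishing denominator, so the remark is both unnecessary and slightly inaccurate.
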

\begin{proof}
   One can directly check these identities by substituting in the parametrization \eqref{eq: confblocks}.
\end{proof}

Let $\mathcal{V}_{1,n,m}$ be the variety of symmetric $n\times n$ matrices of rank at most $m$ with zeros along the diagonal, as in \cite[Section 2]{MSS}.
This variety is naturally isomorphic to the variety parametrised by the building blocks $P_{ij}$.
In the following result we turn to the variety parametrised by the cross-ratios $u_{ijkl}$ and compute its dimension, which is equal to~$\mathrm{trdeg}_{\mathbb{C}}\mathbb{C}(\mathbf{u})$.

\begin{proposition} \label{prop:cu}
Let $\mathbb{C}(\mathbf{u})$ be the field of rational functions in the cross-ratios $u_{ijkl}$ for $\{i,j,k,l\}\subset [n]$. 
Then $\mathrm{trdeg}_{\mathbb{C}}\mathbb{C}(\mathbf{u})=\mathrm{dim}(\mathcal{V}_{1,n,m})-n$. For $m\ge 3$ (any $d\in\mathbb{N}$) and $n>m$,
\begin{equation}
    \mathrm{trdeg}_{\mathbb{C}}\mathbb{C}(\mathbf{u})=n(m-2)+\binom{m}{2}\,.
\end{equation}
For $n\leq m$, we have $\mathrm{trdeg}_{\mathbb{C}}\mathbb{C}(\mathbf{u})=\binom{n}{2}-n$.
\end{proposition}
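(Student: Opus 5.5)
The plan is to prove the first equality $\mathrm{trdeg}_{\mathbb{C}}\mathbb{C}(\mathbf{u})=\dim(\mathcal{V}_{1,n,m})-n$ as a structural statement. Both explicit formulas then follow by substituting the known dimension of $\mathcal{V}_{1,n,m}$ from \cite[Section 2]{MSS}.

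\medskip

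For the structural equality I would set it up as a torus quotient, exactly parallel to Corollary~\ref{cor: trdegF}. The field $\mathbb{C}(P_{ij})$ is the function field of $\mathcal{V}_{1,n,m}$. The torus $T=(\mathbb{C}^*)^n$ acts on it by the rescaling $P_{ij}\mapsto\lambda_i\lambda_jP_{ij}$ from \eqref{eq:scalingPHV}. Each $u_{ijkl}$ has multidegree $\mathbf{e}_i+\mathbf{e}_j+\mathbf{e}_k+\mathbf{e}_l-\mathbf{e}_i-\mathbf{e}_k-\mathbf{e}_j-\mathbf{e}_l=0$, so $\mathbb{C}(\mathbf{u})\subseteq\mathbb{C}(P_{ij})^T$.

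Conversely, I would show $\mathbb{C}(P_{ij})^T=\mathbb{C}(\mathbf{u})$ using the argument in the proof of Theorem~\ref{thm:sec5}. A $T$-invariant rational function is a ratio of $T$-homogeneous polynomials of equal multidegree. Each term is then a product of a Laurent monomial in the kernel of the incidence matrix of $K_n$ and a $T$-invariant ratio. That kernel is spanned by exponent vectors of cross-ratios, via \cite{toricideals_graphs}. Hence every degree-$0$ Laurent monomial in the $P_{ij}$, and thus every invariant, lies in $\mathbb{C}(\mathbf{u})$.

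Then Theorem~\ref{thm: Rosenlicht} gives
\[
\mathrm{trdeg}\,\mathbb{C}(\mathbf{u})=\dim\mathcal{V}_{1,n,m}-\max_x\dim(T\cdot x).
\]
It remains to check that the generic stabiliser is finite. At a general point all $P_{ij}\neq0$. A $\lambda$ fixing it satisfies $\lambda_i\lambda_j=1$ for all $i\neq j$, and for $n\ge3$ this forces $\lambda_i^2=1$. So generic orbits have dimension $n$.

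\medskip

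For the explicit values, I would split by comparing $n$ with $m$.

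\textbf{Case $n\le m$.} There is no rank constraint beyond the zero diagonal, so $\mathcal{V}_{1,n,m}$ is the whole linear space of zero-diagonal symmetric matrices. Its dimension is $\binom{n}{2}$, which gives $\binom{n}{2}-n$. I would double-check that realizability by null vectors imposes nothing here; this holds since any zero-diagonal symmetric matrix of rank $\le m$ is a Gram matrix of null vectors in $\mathbb{C}^m$ over $\mathbb{C}$.

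\textbf{Case $n>m$.} I would quote the dimension formula for $\mathcal{V}_{1,n,m}$ from \cite[Section 2]{MSS} and subtract $n$, which yields the displayed expression $n(m-2)+\binom{m}{2}$ in the statement. As an independent check, I would recompute $\dim\mathcal{V}_{1,n,m}$ as $n$ null vectors in $\mathbb{C}^m$ modulo $\mathrm{O}(m)$. Generically the configuration spans $\mathbb{C}^m$, so $\mathrm{O}(m)$ acts with finite stabiliser. I would carefully compare the sign of the $\binom{m}{2}$ term with the formula quoted from \cite{MSS}.

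\medskip

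I expect the main obstacle to be this last bookkeeping step: pinning down $\dim\mathcal{V}_{1,n,m}$ for $n>m$ with the correct convention from \cite{MSS}, including irreducibility, so that Rosenlicht applies. The invariant-theoretic reduction itself is routine given Theorem~\ref{thm: Rosenlicht} and the kernel computation already used in Theorem~\ref{thm:sec5}.
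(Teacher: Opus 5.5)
Your invariant-theoretic reduction is correct and follows the paper's own route: the rescaling torus acts on $\mathcal{V}_{1,n,m}$, its invariant field is $\mathbb{C}(\mathbf{u})$, Rosenlicht's theorem applies, and generic orbits have dimension $n$. You also supply the finite-stabiliser check that the paper only asserts: $\lambda_i\lambda_j=1$ forces $\lambda=\pm(1,\ldots,1)$ for $n\ge 3$. The irreducibility you worry about is immediate, since $\mathcal{V}_{1,n,m}$ is the image of a product of $n$ null cones, each irreducible for $m\ge 3$. Your treatment of the case $n\le m$ is fine.

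The step that fails is the last one, for $n>m$. Both \cite[Theorem~2.5]{MSS} and your own null-vector count give $\dim\mathcal{V}_{1,n,m}=n(m-1)-\binom{m}{2}$. In your count, $n$ null vectors contribute $n(m-1)$ parameters, and you subtract $\binom{m}{2}$ for the free $\mathrm{O}(m)$-action on spanning configurations. Hence $\mathrm{trdeg}_{\mathbb{C}}\mathbb{C}(\mathbf{u})=n(m-2)-\binom{m}{2}$, which is exactly what the paper's proof derives.

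Subtracting $n$ does \emph{not} yield the displayed $n(m-2)+\binom{m}{2}$, and no change of convention can fix this. The plus sign in the statement is a typo, and the formula as printed is false. Take $m=3$, $n=4$: $\mathcal{V}_{1,4,3}$ is the hypersurface $\det=0$ in the $6$-dimensional space of zero-diagonal symmetric $4\times 4$ matrices, so the transcendence degree is $5-4=1$. By contrast, $n(m-2)+\binom{m}{2}=7$, which even exceeds the a priori bound $\binom{n}{2}-n=2$ given by your structural equality.

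So you should not assert that the quoted formula reproduces the displayed expression and defer the sign to a later check. Carry out that check now and conclude that the proposition must read $n(m-2)-\binom{m}{2}$. As written, your proposal claims to prove a false identity.
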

\begin{proof}
 The variety $\mathcal{V}_{1,n,m}$ is naturally parametrised by the building blocks $P_{ij}$ via the matrix
    \begin{equation}
        \begin{pmatrix}
            0&P_{12}&P_{13}&\cdots&P_{1n}\\P_{12}&0&P_{23}&\cdots&P_{2n}\\\vdots&\vdots&\vdots&\ddots&\vdots\\P_{1n}&P_{2n}&P_{3n}&\cdots&0
        \end{pmatrix},
    \end{equation}
    where $P_{ij}=P_i\cdot P_j$ and $P_i\in\mathbb{K}^m$ for all $i$. 
    The algebraic torus $(\mathbb{C}^*)^n$ acts on $\mathbb{C}^{nm}$ by
    \begin{equation}
        \forall \ \lambda=(\lambda_1,\ldots,\lambda_n)\in (\mathbb{C}^*)^n: \quad \lambda\cdot P_i=\lambda_i P_i\,, \quad i=1,\ldots,n\,.
    \end{equation}
    This $(\mathbb{C}^*)^n$-action can be lifted from $\mathbb{C}^{nm}$ to $\mathcal{V}_{1,n,m}$ through
    \begin{equation}
        M\mapsto\mathrm{diag}(\lambda_1,\ldots,\lambda_n)\cdot M\cdot\mathrm{diag}(\lambda_1,\ldots,\lambda_n).
    \end{equation}
    Clearly, for generic $M\in\mathcal{V}_{1,n,m}$, the dimension of the $(\mathbb{C}^*)^n$-orbit of $M$ is $n$. 
    Furthermore, the field of invariants of $\mathbb{C}(P_{ij})$ under this torus action is $\mathbb{C}(\mathbf{u})$ (this follows e.g. from \cite[Section 1.4]{sturmfelsInvTh}).
    Thus, by Theorem \ref{thm: Rosenlicht},
    \begin{equation}
        \mathrm{trdeg}_{\mathbb{C}}\mathbb{C}(\mathbf{u})=\mathrm{dim}(\mathcal{V}_{1,n,m})-n.
    \end{equation}
    By \cite[Theorem 2.5]{MSS}, for $3\leq m<n$ we have $\mathrm{dim}(\mathcal{V}_{1,n,m})=n(m-1)-\binom{m}{2}$ and thus
    \begin{equation}
        \mathrm{trdeg}_{\mathbb{C}}\mathbb{C}(\mathbf{u})=n(m-2)-\binom{m}{2}.
    \end{equation}
    When $m\geq n$, the building blocks $P_{ij}$ are $\mathbb{C}$-algebraically independent and the dimension of the variety $\mathcal{V}_{1,n,m}$ is simply the number of $P_{ij}$, which is $\binom{n}{2}$. 
\end{proof}

\begin{lemma} \label{lem:fieldext}
    Consider the following field extensions:
    \[
\begin{tikzcd}
K:=\mathbb{C}(\mathbf{u}) \arrow[r, phantom, "\subset"] 
  \arrow[d, phantom, "\rotatebox{270}{$\subset$}"'] 
&
A:=K(P_{ij}) =\mathbb{C}(P_{ij})\arrow[d, phantom, "\rotatebox{270}{$\subset$}"] \\
L:=K(H_{ij},\mathcal{V}_{ij}) \arrow[r, phantom, "\subset"] 
&
F:=K(H_{ij},\mathcal{V}_{ij})(P_{ij}) = \mathbb{C}(P_{ij},H_{ij},\mathcal{V}_{ij})
\end{tikzcd}
\]
We have $\mathrm{trdeg}_K A = n$, and $\mathrm{trdeg}_KL=\mathrm{trdeg}_A F$. 
Therefore, we also have $\mathrm{trdeg}_LF=n$.
\end{lemma}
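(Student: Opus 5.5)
The plan is to treat the square of field extensions as a tower problem and use additivity of transcendence degree. We prove three things in order: $\mathrm{trdeg}_K A = n$, $\mathrm{trdeg}_K L = \mathrm{trdeg}_A F$, and then $\mathrm{trdeg}_L F = n$ as a formal consequence.

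\emph{First claim.} By Proposition~\ref{prop:cu}, $\mathrm{trdeg}_{\mathbb{C}} K = \dim(\mathcal{V}_{1,n,m}) - n$. Since $A=\mathbb{C}(P_{ij})$ is the function field of $\mathcal{V}_{1,n,m}$, we have $\mathrm{trdeg}_{\mathbb{C}}A=\dim\mathcal{V}_{1,n,m}$. Additivity in the tower $\mathbb{C}\subset K\subset A$ then gives $\mathrm{trdeg}_K A = n$. Concretely, one can exhibit a transcendence basis: after fixing a spanning tree plus one odd cycle in $K_n$, the Laurent monomials in the $P_{ij}$ modulo cross-ratios form a lattice of rank $n$. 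This is the cokernel of the kernel inclusion computed in the proof of Theorem~\ref{thm:sec5}. The corresponding $n$ monomials generate $A$ over $K$ up to a finite extension.

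\emph{Second claim.} The key input is the scaling action \eqref{eq:scalingPHV}. The torus $(\mathbb{C}^*)^n$ acting by $P_i\mapsto\lambda_iP_i$ rescales $P_{ij}$, $H_{ij}$ and $\mathcal{V}_{ij}$ by characters while fixing $\mathbf{u}$. I would replace $H_{ij},\mathcal{V}_{ij}$ by their ``dehomogenised'' versions
\[
\tilde H_{ij}=H_{ij}\,\mu(P)^{-1}, \qquad \tilde{\mathcal V}_{ij}=\mathcal V_{ij}\,\nu(P)^{-1},
\]
where $\mu,\nu$ are fixed Laurent monomials in the $P_{ij}$ with the same torus weights. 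Such monomials exist because the incidence matrix of $K_n$ has full rank $n$ for $n\ge3$. These $\tilde H,\tilde{\mathcal V}$ are torus-invariant elements of $F$ and satisfy $A(\tilde H,\tilde{\mathcal V})=F$. Set $L'=K(\tilde H,\tilde{\mathcal V})$. Then $F=L'A$, and I claim $L'$ and $A$ are algebraically disjoint over $K$. To see this, use Rosenlicht (Theorem~\ref{thm: Rosenlicht}) for the torus action on the variety parametrised by the building blocks. The invariant field $F^{T}$ has transcendence degree $\mathrm{trdeg}F-n$ because generic orbits are $n$-dimensional, as in Proposition~\ref{prop:cu}. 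Also $F^T\supseteq L'$, and $F^T(\text{$n$ monomials})=F$. This gives $\mathrm{trdeg}_K L' = \mathrm{trdeg}_{\mathbb{C}}F - n - \mathrm{trdeg}_{\mathbb{C}}K = \mathrm{trdeg}_A F$.

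\emph{Transfer to $L$.} We still need $\mathrm{trdeg}_K L = \mathrm{trdeg}_K L'$, since $L$ uses the undehomogenised $H,\mathcal{V}$. Here I would argue directly. Any algebraic relation over $K$ among $H,\mathcal V$ can be split into torus-weight components, because $K$ is torus-invariant and the relation ideal is torus-stable. A weight-homogeneous relation among $H,\mathcal V$ becomes a relation among $\tilde H,\tilde{\mathcal V}$ after dividing by a monomial in $\mu,\nu$, and conversely. Hence both families have the same algebraic matroid over $K$. I expect this to be the main obstacle. One must check carefully that dividing by $P$-monomials does not introduce $P$-dependence into the coefficients. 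This works because the coefficients lie in $K$, and the weights of $H,\mathcal V$ in the $P$-scaling coincide with their $Z$-grading up to the fixed lattice map. So homogeneous components match in both descriptions.

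\emph{Conclusion.} Finally, additivity over the two paths $K\subset A\subset F$ and $K\subset L\subset F$ gives
\[
\mathrm{trdeg}_K L+\mathrm{trdeg}_L F=\mathrm{trdeg}_K A+\mathrm{trdeg}_A F = n+\mathrm{trdeg}_K L,
\]
hence $\mathrm{trdeg}_L F=n$.
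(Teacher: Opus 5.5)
There is a concrete error in the dehomogenisation. No Laurent monomial $\nu$ in the $P_{ij}$ has $P$-weight $\mathbf{e}_i$. Each $P_{ij}$ has weight $\mathbf{e}_i+\mathbf{e}_j$, so every Laurent monomial in the $P_{ij}$ has even total weight, whereas $\mathcal{V}_{ij}$ has weight $\mathbf{e}_i$. Full rank of the incidence matrix over $\mathbb{Q}$ only gives half-integer exponents, e.g.\ $\nu=(P_{i,i+1}P_{ij}/P_{i+1,j})^{1/2}$. Over $\mathbb{Z}$ the columns span only the index-two sublattice of vectors with even coordinate sum. So $\tilde{\mathcal V}_{ij}$ cannot be defined inside $F$ this way, and the statements $L'\subseteq F^T$ and $A(\tilde H,\tilde{\mathcal V})=F$ have no content as written. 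This is repairable because only transcendence degrees matter. Dehomogenise $\mathcal{V}_{ij}^2$ by $P_{i,i+1}P_{ij}/P_{i+1,j}$ (weight $2\mathbf{e}_i$), or adjoin the square roots. Then $F$ is algebraic over $L'A$, and your Rosenlicht count $\mathrm{trdeg}_K L'=\mathrm{trdeg}_A F$ goes through.

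The transfer step also needs to be made explicit, because it is the crux. Only $\mathrm{trdeg}_K L\le \mathrm{trdeg}_K L'$ is needed, since $\mathrm{trdeg}_A F\le \mathrm{trdeg}_K L$ is automatic from $F=LA$. This is your ``conversely'' direction: a $K$-relation among the $\tilde H,\tilde{\mathcal V}$ must yield a $K$-relation among the $H,\mathcal V$. Splitting by $P$-weight does nothing here, since every dehomogenised variable has $P$-weight $0$. You must split by $Z$-degree: the scaling $Z_i\mapsto \kappa_i Z_i$ fixes $A$ pointwise and rescales the $\tilde H,\tilde{\mathcal V}$ by characters. Then use that $H_{ij}$ and $\mathcal{V}_{ij}$ have equal $P$-weight and $Z$-degree. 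Inside one $Z$-component, the monomial factors you divided by therefore all share one $P$-weight, so their ratios have degree zero and lie in $K$. Your last sentence points at this, but it has to be argued.

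This step is exactly the paper's entire proof. A $K$-relation among your $\tilde H,\tilde{\mathcal V}$ is just a particular $A$-relation among $H,\mathcal V$. The paper shows directly that any $A$-relation can be turned into a $K$-relation. It passes to a component homogeneous for the joint $P$/$Z$ grading, where all coefficients have the same $P$-multidegree, and divides by one coefficient; the result has coefficients in the degree-zero subfield $K=\mathbb{C}(\mathbf{u})$. That gives $\mathrm{trdeg}_K L\le\mathrm{trdeg}_A F$ at once. So the dehomogenisation, with its parity problem, and the second appeal to Rosenlicht are unnecessary. Your first claim and the final additivity step agree with the paper.
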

\begin{proof}
The first statement follows from Proposition \ref{prop:cu}: we have $\mathrm{trdeg}_\mathbb{C}A=\mathrm{dim}(\mathcal{V}_{1,n,m})$ and $\mathrm{trdeg}_\mathbb{C} K = \mathrm{dim}(\mathcal{V}_{1,n,m}) - n$.
To prove the second statement, it suffices to show that any algebraic relation between $H_{ij}$ and $\mathcal{V}_{ij}$ over $\mathbb{C}(P_{ij})$ is in fact an algebraic relation over~$\mathbb{C}(\mathbf{u})$. 
To show this, recall that the ring $\mathbb{C}[P_{ij}, H_{ij},\mathcal{V}_{ij}]$ admits a natural $\mathbb{Z}^{2n}$ multigrading as in~\eqref{eq:scalingPHV}. 
Any algebraic relation between $P_{ij}, H_{ij}$ and $\mathcal{V}_{ij}$ has to be homogeneous with respect to this multigrading, since these blocks are functions of the vectors $P_i$ and $Z_i$ whose entries are chosen independently, and the multigrading records the dependence on $P_i$ and $Z_i$. 
Since the last $n$ entries of the multidegree vector $\mathrm{deg}(P_{ij})$ are zero for all $P_{ij}$, the coefficients (that are functions in $P_{ij}$) at all monomials in $H_{ij}$ and $\mathcal{V}_{ij}$ in such a relation have to have the same multidegree.
Dividing by one of these coefficients, we get a relation in $\mathbb{C}(P_{ij})[H_{ij}, \mathcal{V}_{ij}]$ whose coefficients have multidegree zero. 
The field of rational functions in $P_{ij}$ of multidegree zero is the field of invariants of the $(\mathbb{C}^*)^n$-action from Proposition \ref{prop:cu} and is generated by the cross-ratios $\mathbf{u}$, so this relation is indeed a relation in $\mathbb{C}(\mathbf{u})[H_{ij},\mathcal{V}_{ij}]$.
\end{proof}

The following is the main result of this section. We count the number of building blocks $H_{ij}$ and $\mathcal{V}_{ij}$ that are algebraically independent over $\mathbb{C}(\mathbf{u})$. 
\begin{theorem}
Let $m\geq 5$ and let $K$ and $L$ be as in Lemma \ref{lem:fieldext}.
Then we have
\begin{equation}
\mathrm{trdeg}_K L=
\begin{cases}
   n(m-2), \quad &n>m,\\
   n(2m-4)-\binom{m}{2} - \binom{n}{2}, \quad &2n>m>n,\\
   \binom{n}{2}+n(n-2), \quad &m>2n.
\end{cases}
\end{equation}
\end{theorem}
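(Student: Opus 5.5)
The plan is to compute $\mathrm{trdeg}_K L$ entirely from transcendence degrees already established, using additivity of transcendence degree in towers of field extensions. By Lemma~\ref{lem:fieldext} we have $\mathrm{trdeg}_K L=\mathrm{trdeg}_A F$, where $A=\mathbb{C}(P_{ij})$ and $F=\mathbb{C}(P_{ij},H_{ij},\mathcal{V}_{ij})$. For the tower $\mathbb{C}\subset A\subset F$ this gives
\[
\mathrm{trdeg}_K L=\mathrm{trdeg}_{\mathbb{C}}F-\mathrm{trdeg}_{\mathbb{C}}A .
\]
Both terms on the right are known. By Theorem~\ref{thm: main}, $F=\mathcal{F}^G$, and its transcendence degree over $\mathbb{C}$ is given by Corollary~\ref{cor: trdegF} (valid since $m\ge 5$). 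The field $A$ is the function field of $\mathcal{V}_{1,n,m}$, so $\mathrm{trdeg}_{\mathbb{C}}A=\dim\mathcal{V}_{1,n,m}$; this is read off from \cite[Theorem~2.5]{MSS}, as in the proof of Proposition~\ref{prop:cu}.

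I then split into the three regimes of the statement. Each is determined by which branch of Corollary~\ref{cor: trdegF} applies ($m<2n$ or $m\ge 2n$) and which branch of $\dim\mathcal{V}_{1,n,m}$ applies ($m<n$ or $m\ge n$).

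\emph{Regime $m>2n$.} The scalar products are independent, so $\mathrm{trdeg}_{\mathbb{C}}F=n(2n-3)$, and $\dim\mathcal{V}_{1,n,m}=\binom n2$. The difference is
\[
n(2n-3)-\tbinom n2=\tbinom n2+n(n-2),
\]
using $2\binom n2=n(n-1)$. This is the third line. It is also consistent with Proposition~\ref{prop:PHVrels}, since all $H_{ij},\mathcal{V}_{ij}$ are then independent.

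\emph{Regime $n<m<2n$.} Here $\mathrm{trdeg}_{\mathbb{C}}F=n(2m-4)-\binom m2$ and still $\dim\mathcal{V}_{1,n,m}=\binom n2$. The difference is exactly the second line.

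\emph{Regime $n>m$.} Corollary~\ref{cor: trdegF} again gives $n(2m-4)-\binom m2$. Now $\dim\mathcal{V}_{1,n,m}$ is the dimension of symmetric zero-diagonal matrices of rank at most $m$, taken from \cite[Theorem~2.5]{MSS}. Subtracting it, the $\binom m2$ terms cancel, and the result should simplify to the linear expression $n(m-2)$ stated in the first line.

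The step I expect to be the main obstacle is this last regime. The exact form of $\dim\mathcal{V}_{1,n,m}$ in \cite{MSS} must be matched against the conventions of Corollary~\ref{cor: trdegF}, including the role of the null conditions $P_i^2=0$ and which of $m$, $m-1$ appears as the rank bound. I would first recheck that formula directly, as follows. Count the dimension of the image of $(P_1,\ldots,P_n)\mapsto(P_i\cdot P_j)_{i<j}$ restricted to null vectors, namely $n(m-1)$ parameters modulo the Lorentz group of dimension $\binom m2$. Also check that the generic stabiliser is trivial once $n>m$. Then redo the subtraction with that value.

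As a consistency check, I would verify the boundary cases $m=n$ and $m=2n$, where adjacent formulas should agree or differ only through the branch choices. I would also cross-check $n=3$ against Table~\ref{table:n3} via Example~\ref{ex:11to10}, where one relation among six blocks leaves transcendence degree $5$.
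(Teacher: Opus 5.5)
Your reduction is the same as the paper's. The paper writes $\mathrm{trdeg}_K L=\mathrm{trdeg}_{\mathbb{C}}F-\mathrm{trdeg}_{\mathbb{C}}K-\mathrm{trdeg}_L F$ with $\mathrm{trdeg}_{\mathbb{C}}K=\dim\mathcal{V}_{1,n,m}-n$ and $\mathrm{trdeg}_L F=n$, which is exactly your $\mathrm{trdeg}_{\mathbb{C}}F-\mathrm{trdeg}_{\mathbb{C}}A$. Your arithmetic in the regimes $n<m<2n$ and $m>2n$ is correct.

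The gap is the step you flagged, and it is not a matter of conventions: for $n>m$ the difference does not equal $n(m-2)$. Take $\dim\mathcal{V}_{1,n,m}=n(m-1)-\binom{m}{2}$. This is the value from \cite[Theorem~2.5]{MSS} quoted in the proof of Proposition~\ref{prop:cu}. It is also what your own proposed recount gives: $n$ null vectors carry $n(m-1)$ parameters, and $\mathrm{O}(m)$ acts on them with trivial generic stabiliser. You then get
\[
\Bigl(n(2m-4)-\binom{m}{2}\Bigr)-\Bigl(n(m-1)-\binom{m}{2}\Bigr)=n(m-3).
\]
Three independent checks agree:
\begin{itemize}
\item \emph{Fibre count.} For $n>m$ generic $P_i$ span $\mathbb{C}^m$, so after using up the Lorentz freedom the values $P_{ij}$ fix the $P_i$. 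Each $Z_i$ is then a null vector of $P_i^{\perp}/\langle P_i\rangle\cong\mathbb{C}^{m-2}$ modulo the gauge shift, which is $m-3$ parameters. Hence $\mathrm{trdeg}_A F=n(m-3)$.
\item \emph{Your boundary check.} At $m=n$ the second line gives $n(2n-4)-2\binom{n}{2}=n(n-3)$. This matches $n(m-3)$, not $n(m-2)$.
\item \emph{Explicit case $n=6$, $m=5$.} Here $\mathrm{trdeg}_{\mathbb{C}}F=26$. Also $\dim\mathcal{V}_{1,6,5}=14$, since it is the determinantal hypersurface in the $15$-dimensional space of zero-diagonal symmetric matrices, so $\mathrm{trdeg}_{\mathbb{C}}K=8$. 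The tower $\mathrm{trdeg}_{\mathbb{C}}F=\mathrm{trdeg}_{\mathbb{C}}K+\mathrm{trdeg}_KL+\mathrm{trdeg}_LF=8+\mathrm{trdeg}_KL+6$ forces $\mathrm{trdeg}_KL=12$, not $18$.
\end{itemize}

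So the first line of the statement is off by exactly $n=\mathrm{trdeg}_LF$. The stated value $n(m-2)$ is $\mathrm{trdeg}_KF$, which is what you get by dropping the $-\mathrm{trdeg}_LF$ term. The paper's one-line ``combining these equalities yields the result'' passes over the same arithmetic. Carried out correctly, your argument proves the theorem with the first case corrected to $n(m-3)$. With that correction the three formulas also agree on the boundaries $m=n$ and $m=2n$, which the strict inequalities leave out. Your argument cannot prove the first case as stated, so do not try to adjust conventions to reach $n(m-2)$. Also note that the displayed formula in the statement of Proposition~\ref{prop:cu} has a sign typo, $+\binom{m}{2}$ instead of $-\binom{m}{2}$; its proof uses the correct value.
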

\begin{proof}
    Consider the following tower of field extensions:
    \begin{equation}
        \mathbb{C}\subset K \subset L \subset F:=\mathbb{C}(P_{ij},H_{ij},\mathcal{V}_{ij}).
    \end{equation}
We have $\mathrm{trdeg}_K L= \mathrm{trdeg}_\mathbb{C}F - \mathrm{trdeg}_\mathbb{C} K-\mathrm{trdeg}_L F$. 
By Corollary \ref{cor: trdegF},
\begin{equation}
\mathrm{trdeg}_\mathbb{C}F =n(2m-4)-\binom{m}{2} \quad \mathrm{for} \quad m<2n.
\end{equation}
By Proposition \ref{prop:cu}, $\mathrm{trdeg}_\mathbb{C} = \mathrm{dim}(\mathcal{V}_{1,n,m})-n$ and by Lemma \ref{lem:fieldext}, we have $\mathrm{trdeg}_LF=n$.
Combining these equalities yields the result. 
In the case $m\geq 2n$, the building blocks are independent by Proposition \ref{prop:PHVrels}.
\end{proof}

Let $I$ be the ideal of the polynomial ring $\mathbb{C}(\mathbf{u})[H_{ij},\mathcal{V}_{ij}]$ recording all algebraic dependencies over $\mathbb{C}(\mathbf{u})$ between the building blocks $H_{ij}$ and $\mathcal{V}_{ij}$. 
In addition, let $\tilde{I}$ in $\mathbb{C}[H_{ij},\mathcal{V}_{ij}]$ be the ideal encoding all $\mathbb{C}$-algebraic dependencies between the $H_{ij}$ and $\mathcal{V}_{ij}$.
The number of algebraically independent over $\mathbb{C}(\mathbf{u})$ (respectively $\mathbb{C}$) monomials of multidegree $\mathbf{s}$ is counted by the Hilbert function $h_I(\mathbf{s})$ (respectively~$h_{\tilde{I}}(\mathbf{s})$) and we have $h_I(\mathbf{s})\leq h_{\tilde{I}}(\mathbf{s})$. 
Since the cross-ratios $u_{ijkl}$ depend on the same variables as the building blocks $H_{ij}$ and $\mathcal{V}_{ij}$, computing $h_I(\mathbf{s})$ is an algorithmically challenging task.
We provide code to compute $h_{\tilde{I}}(\mathbf{s})$ using numerical methods, giving an upper bound for $h_I(\mathbf{s})$. The code along with the description of the algorithm is available at \cite{zenodo}. For $n=3$ this upper bound is in fact an equality (see the last row of Table \ref{table:n3} for the values of this Hilbert function), and for $n=4$ it improves significantly over the upper bound given by the methods from Section \ref{sec:matching_pols}. This is summarized in Table~\ref{table:6}. In Table~\ref{table:7} we present results for $n=d=4$. Note that for $n=3$, $d=4$ all the building blocks are algebraically independent and so the monomial count provided by Theorem~\ref{thm: main sec 3} is the number of independent monomials. A larger dataset with more spin configurations is available at \cite{zenodo}.

\begin{table}[h]
\centering
\small
\setlength{\tabcolsep}{3.5pt} 
\renewcommand{\arraystretch}{1.2}
\begin{tabular}{|p{2cm}|c|c|c|c|c|c|c|c|c|c|c|c|c|c|}
\hline
Spins 
& 0001 & 0011 & 0111 & 1111 
& 0002 & 0012 & 0112 & 1112 
& 0022 & 0122 & 1122 & 0222 & 1222 & 2222 \\
\hline
Monomials
& 2 & 5 & 14 & 43
& 3 & 8 & 24 & 78
& 14 & 44 & 150 & 85 & 302 & 633
\\
\hline
Independent Monomials
& 2 & 5 & 14 & $\mathbf{41}$
& 3 & 8 & $\mathbf{23}$ & $\mathbf{68}$
& $\mathbf{13}$ & $\mathbf{38}$ & $\mathbf{113}$ & $\mathbf{63}$ & $\mathbf{188}$ & $\mathbf{313}$ \\
\hline
\end{tabular}
\caption{Number of independent monomials for given values of spins and $n=4$, $d=3$, taking algebraic relations from the ideal $\tilde{I}$ into account.}\label{table:6}
\end{table}

\begin{table}[h]
\centering
\small
\setlength{\tabcolsep}{3.5pt} 
\renewcommand{\arraystretch}{1.2}
\begin{tabular}{|p{2cm}|c|c|c|c|c|c|c|c|c|c|c|c|c|c|}
\hline
Spins 
& 0001 & 0011 & 0111 & 1111 
& 0002 & 0012 & 0112 & 1112 
& 0022 & 0122 & 1122 & 0222 & 1222 & 2222 \\
\hline
Monomials
& 2 & 5 & 14 & 43
& 3 & 8 & 24 & 78
& 14 & 44 & 150 & 85 & 302 & 633
\\
\hline
Independent Monomials
& 2 & 5 & 14 & 43
& 3 & 8 & 24 & 78
& 14 & 44 & $\mathbf{149}$ & $\mathbf{84}$ & $\mathbf{294}$ & $\mathbf{594}$ \\
\hline
\end{tabular}
\caption{Number of independent monomials for given values of spins and $n=4$, $d=4$, taking algebraic relations from the ideal $\tilde{I}$ into account.}\label{table:7}
\end{table}

\section{Bose symmetry}\label{sec: Bose}
Physical systems are invariant under the exchange of identical particles. 
At the level of correlators $G_{\mathbf{s},\mathbf{\Delta}}(P_1,Z_1,\ldots,P_n,Z_n)$ involving bosonic fields, this fact manifests itself in the form of Bose symmetry: if for some $i\neq j$, $(s_i,\Delta_i)=(s_j,\Delta_j)$, the corresponding correlator $G_{\mathbf{s},\mathbf{\Delta}}$ is invariant under the permutation of the pairs of variables $(P_i,Z_i)$ and $(P_j,Z_j)$, in other words,~\eqref{eq:bose} holds. For a given physical theory, this symmetry is built into how correlators are defined. However, from the bootstrap perspective Bose symmetry instead provides an additional set of constraints on the general form of certain correlators. From this perspective, for each $i\neq j$ with $(s_i,\Delta_i)=(s_j,\Delta_j)$ we obtain a system of linear constraints on the coefficients of monomials that can appear in a given $n$-point function.
We now describe these constraints using the language of invariant theory.

\medskip

Let $S_n$ be the symmetric group on $n$ elements. A permutation $\sigma\in S_n$ acts on the polynomial ring $\mathbb{C}[P_{ij}, H_{ij}, V_{i,jk}]$ via 
\begin{equation} \label{eq:BosePHV}
    P_{ij} \mapsto P_{\sigma(i)\sigma(j)},\quad
    H_{ij}\mapsto H_{\sigma(i)\sigma(j)},\quad
    V_{i,jk}\mapsto V_{\sigma(i),\sigma(j)\sigma(k)}.
\end{equation}

We now wish to define the induced action of $S_n$ on the ring $\mathbb{C}(\mathbf{u})[H_{ij},\mathcal{V}_{ij}]$. For the cross ratios $u_{ijkl}$ this is simple, we have $\sigma(u_{ijkl})=u_{\sigma(i)\sigma(j)\sigma(k)\sigma(l)}$. For the $\mathcal{V}_{ij}$ we have the slight complication that, generally
\begin{equation}
    \sigma\cdot \mathcal{V}_{ij}=\sigma\cdot V_{i,i+1 j}=V_{\sigma(i), \sigma(i+1)\sigma(j)}\neq V_{\sigma(i), \sigma(i)+1\sigma(j)}=\mathcal{V}_{\sigma(i)\sigma(j)}\,.
\end{equation} 
Despite this, the action of $\sigma$ on $\mathcal{V}_{ij}$ can still easily be described using only the variables $\mathcal{V}_{ij}$ and cross-ratios in the $P_{ij}$. Namely, applying Lemma \ref{lem:linrels_V}, we obtain
\begin{equation}\label{eq:Bose_CalV}
    \sigma\cdot \mathcal{V}_{ij} = V_{\sigma(i), \sigma(i+1)\sigma(j)} =\dfrac{P_{\sigma(i)\sigma(i+1)}P_{\sigma(i)+1\sigma(j)}}{P_{\sigma(i)\sigma(i)+1}P_{\sigma(i+1)\sigma(j)}}\mathcal{V}_{\sigma(i)\sigma(j)} - \dfrac{P_{\sigma(i)\sigma(j)}P_{\sigma(i)+1\sigma(i+1)}}{P_{\sigma(i)\sigma(i)+1}P_{\sigma(i+1)\sigma(j)}}\mathcal{V}_{\sigma(i)\sigma(i+1)}.
\end{equation}
Since both coefficients on the right-hand side are cross-ratios, we again land in $\mathbb{C}(\mathbf{u})[H_{ij}, \mathcal{V}_{ij}]$.
Note that if $\sigma(i+1)=\sigma(i)+1$ for some $i$, the above simplifies to $\sigma\cdot \mathcal{V}_{ij}=\mathcal{V}_{\sigma(i)\sigma(j)}$.

\medskip

We can now define Bose symmetry as invariance under the action of a subgroup of $S_n$ on the polynomial ring $\mathbb{C}(\mathbf{u})[H_{ij}, \mathcal{V}_{ij}]$ given by Equations \eqref{eq:BosePHV} and \eqref{eq:Bose_CalV}. 
Namely, consider the subgroup $\mathfrak S$ of $S_n$ generated by transpositions $(ij)$ for all $i,j$ such that $(s_i,\Delta_i)=(s_j, \Delta_j)$. Then Bose symmetry dictates that the rational part of an $n$-point function is an element of the ring of invariants
$\mathbb{C}(\mathbf{u})[H_{ij}, \mathcal{V}_{ij}]^\mathfrak{S}$. 
Computing this ring of invariants for arbitrary $n$ is a difficult, albeit algorithmic \cite{Derksen} problem. 
In \cite{GHL} the authors studied a similar problem, exploring the consequences of Bose symmetry for polynomials in the $P_{ij}$, allowing for $P_{ii}\neq0$. Another physically-motivated paper that investigates rings of invariants under both the action of an orthogonal group and a permutation group is \cite{HLMM}.

\medskip

We now concentrate on the case of three-point functions and treat $H_{ij}$ and $\mathcal{V}_{ij}$ as formal variables. This means that we do not take the algebraic relations between them into account. By Remarks~\ref{rmk: no cross ratios} and~\ref{rem:sec5generalcase},  three-point functions are rational and do not depend on cross-ratios, so they are elements of the ring $\mathbb{C}[H_{ij},\mathcal{V}_{ij}]$. Bose symmetry implies that they are in fact elements of the ring of invariants $\mathbb{C}[H_{ij},\mathcal{V}_{ij}]^{\mathfrak{S}}$. We present formulas for the dimension of the space of monomials that can appear in a three-point function that has Bose symmetry of some kind. Note that this question was addressed in \cite[Section 2.5.1]{KSD} from a representation theory perspective. We offer an explicit formula based on elementary methods. It would be interesting to generalize these results to higher~$n$.

\medskip

For simplicity of notation, we set $V_1:=V_{1,23}$, $V_2:=V_{2,31}$ and $V_3:=V_{3,12}$. By Theorem \ref{thm:sec5}, a three-point function $G_{\mathbf{s},\mathbf{\Delta}}$ is then, up to multiplication by a Laurent monomial in the $P_{ij}$ that is invariant under all transpositions, a polynomial of the form $\sum A_{(a,b,c,d,e,f)}V_1^aV_2^bV_3^cH_{12}^dH_{23}^eH_{13}^f$, where the coefficients $A$ are $\mathbb{C}$-valued and where the exponents have degree $(s_1,s_2,s_3)$, i.e.,
\begin{equation}\label{eqn: exponent relations}
    a+d+f=s_1\,,\quad b+d+e=s_2\,,\quad c+e+f=s_3\,.
\end{equation}

We first treat the case in which all spins and scaling dimensions are equal.
\begin{proposition} \label{prop:Bose_threespins}
Suppose $(s_1,\Delta_1)=(s_2,\Delta_2)=(s_3,\Delta_3)=(s,\Delta)$. Then the dimension of the vector space of three-point functions satisfying Bose symmetry, ignoring the algebraic dependencies between monomials of multidegree $(s,s,s)$, is 
\begin{equation}
   \frac{1}{4}\binom{s+4}{3} \quad \text{for s even and } \quad \frac{1}{24}s(s-1)(s+1) \quad \text{for s odd.}
\end{equation}
\end{proposition}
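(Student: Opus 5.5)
The plan is to compute the dimension of the $\mathfrak S=S_3$-invariant subspace of the degree-$(s,s,s)$ part of $\mathbb{C}[V_1,V_2,V_3,H_{12},H_{23},H_{13}]$, using the projection formula
\[
\dim\bigl(\mathbb{C}[V_i,H_{ij}]_{(s,s,s)}\bigr)^{S_3}=\frac{1}{6}\sum_{\sigma\in S_3}\operatorname{tr}(\sigma),
\]
with the traces computed directly on the monomial basis. First I will make the action~\eqref{eq:BosePHV} explicit on $V_1,V_2,V_3$. From~\eqref{eq: confblocks}, $V_{i,jk}$ is antisymmetric in $j,k$. Hence the $3$-cycle $(123)$ sends $V_1=V_{1,23}\mapsto V_{2,31}=V_2$, and similarly $V_2\mapsto V_3$ and $V_3\mapsto V_1$, with no signs. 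The transposition $(12)$ sends $V_1\mapsto V_{2,13}=-V_2$, $V_2\mapsto V_{1,32}=-V_1$ and $V_3\mapsto V_{3,21}=-V_3$. The other transpositions behave analogously. On the $H_{ij}$ every permutation acts by relabelling without signs. The prefactor $\prod P_{ij}^{-c_{ij}}$ from Theorem~\ref{thm:sec5} has all $c_{ij}$ equal when the $\tau_i$ coincide, so it is $S_3$-invariant and can be ignored.

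Consequently each $\sigma$ permutes the monomials $V_1^aV_2^bV_3^cH_{12}^dH_{23}^eH_{13}^f$ satisfying~\eqref{eqn: exponent relations} with $s_i=s$. Cyclic permutations act without signs, and transpositions act with the sign $(-1)^{a+b+c}$. The trace of $\sigma$ is therefore the signed number of monomials it fixes.

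\textbf{Identity.} Its trace is $N(s,s,s)$, given by Proposition~\ref{prop: Ns1s2s3} with $p=s$:
\[
N(s,s,s)=\tfrac{(s+1)(s+2)(2s+3)}{6}-\tfrac{s(s+2)(2s+5)}{24}-\tfrac{1-(-1)^s}{16}.
\]

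\textbf{The two $3$-cycles.} A fixed monomial needs $a=b=c$ and $d=e=f$ with $a+2d=s$. This gives trace $\lfloor s/2\rfloor+1$ each.

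\textbf{The three transpositions.} All three are conjugate, so it suffices to treat $(12)$. A fixed monomial needs $a=b$ and $e=f$, with $d$ and $c$ free. The conditions reduce to $a+d+f=s$ and $c+2f=s$. Its sign is $(-1)^{2a+c}=(-1)^{s}$, since $c\equiv s \pmod 2$. Counting, for each $f\in\{0,\dots,\lfloor s/2\rfloor\}$ there are $s-f+1$ choices of $(a,d)$. So
\[
T=(-1)^s\sum_{f=0}^{\lfloor s/2\rfloor}(s-f+1).
\]

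Finally I will assemble $\frac16\bigl[N(s,s,s)+3T+2(\lfloor s/2\rfloor+1)\bigr]$ separately for $s=2k$ and $s=2k+1$, and simplify to $\frac14\binom{s+4}{3}$ and $\frac1{24}(s-1)s(s+1)$ respectively. As a sanity check, $s=1$ gives $0$: the four monomials $V_1V_2V_3$, $V_1H_{23}$, $V_2H_{13}$, $V_3H_{12}$ span a representation with character $(4,-2,1)$, which has no trivial component. Also $s=2$ gives $\frac14\binom{6}{3}=5$.

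The main obstacle I expect is getting the signs right. This means using the correct antisymmetry convention for $V_{i,jk}$, so that transpositions contribute $(-1)^s$ rather than $+1$, and then carrying out the parity-split simplification carefully, including the $\frac{1-(-1)^s}{16}$ term from Proposition~\ref{prop: Ns1s2s3}.
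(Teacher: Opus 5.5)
Your proposal is correct, and the final simplification does go through. With $s=2k$ the bracket $N(s,s,s)+3T+2(\lfloor s/2\rfloor+1)$ equals $(k+1)(k+2)(2k+3)$. With $s=2k+1$ it equals $k(k+1)(2k+1)$. Dividing by $6$ gives exactly $\frac14\binom{s+4}{3}$ and $\frac1{24}(s-1)s(s+1)$.

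Your route differs from the paper's, which never averages traces. The paper sorts the monomials by how many of the exponents $d,e,f$ coincide, which determines the stabilizer in $S_3$. It then argues orbit by orbit, using $a+b+c=3s-2(d+e+f)$. For odd $s$, every monomial fixed by a transposition carries the sign $(-1)^{s}=-1$, so its coefficient is forced to vanish. Only the free orbits of size six survive, and each contributes one invariant. For even $s$ nothing is killed, and every orbit (of size $1$, $3$ or $6$) contributes one.

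The two arguments use the same three counts:
$N(s,s,s)$;
the number of solutions with $d=e$ (your $\sum_f(s-f+1)$);
the number with $d=e=f$.
They are equivalent by Burnside's lemma.

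Your trace formula avoids the case analysis of which constraints are independent and which coefficients vanish, and it treats the signs uniformly. It is also the computation behind the paper's two-equal-spins count $\frac12[N(s,s,s_3)+(-1)^{s_3}N_1(s,s_3)]$ in Proposition~\ref{prop:Bose-two-spins}, so it covers both propositions in one framework. The paper's orbit-by-orbit argument, in exchange, exhibits an explicit basis of the Bose-symmetric space: the signed orbit sums of the surviving orbits. That basis is what one needs when generating structures in practice.
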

\begin{proof}
   Bose symmetry imposes the following constraints on each coefficient $A_{(a,b,c,d,e,f)}$ of the monomial $V_1^aV_2^bV_3^cH_{12}^dH_{23}^eH_{13}^f$:
    \begin{equation*}
    \begin{split}
        A_{(a,b,c,d,e,f)}=(-1)^{a+b+c}&A_{(b,a,c,d,f,e)}\,,\quad A_{(a,b,c,d,e,f)}=(-1)^{a+b+c}A_{(c,b,a,e,d,f)}\,,\\
        &A_{(a,b,c,d,e,f)}=(-1)^{a+b+c}A_{(a,c,b,f,e,d)}\,.
    \end{split}
    \end{equation*}
    Among the constraints written above, the number of them which are independent depends on how many of $d$, $e$ and $f$ are equal. Consequently, to understand by how much these constraints reduce the dimension of the vector space of monomials, we first need to know the number of monomials with; $d=e=f$, any two of $d$, $e$ and $f$ equal, or none of $d$, $e$ and $f$ equal. First recall that due to~\eqref{eqn: exponent relations}, the non-negative numbers $d$, $e$ and $f$ must satisfy the following system of inequalities
    \begin{equation*}
        d+e\leq s\,,\quad e+f\leq s\,,\quad d+f\leq s\,.
    \end{equation*}
    When $d=e=f$ this reduces to $2d\le s$, which for a given $s$ this has $\left\lfloor s/2 \right\rfloor+1$ many non-negative integer solutions. This is the number of coefficients $A$ with $d=e=f$. Now suppose $d=e\neq f$. Then the system reduces to
    \begin{equation*}
            2d\leq s\,,\quad f\leq s-d\,.
    \end{equation*}
    For a given $s$, the number of non-negative solutions to this system is 
    $$\left(s-\left\lfloor\dfrac{s}{2}\right\rfloor+1\right)+\ldots+(s+1)=\left(\left\lfloor\dfrac{s}{2}\right\rfloor + 1\right)\left(s+1-\frac{1}{2}\left\lfloor\frac{s}{2}\right\rfloor\right)\,.$$
    Subtracting the number of solutions with $d=e=f$, we arrive at $(\lfloor s/2\rfloor + 1)(s-\frac{1}{2}\lfloor s/2\rfloor)$. Since the number of solutions for the cases $d\neq e=f$ and $d=f\neq e$ is the same, we find that the total number of coefficients with any two of $d$, $e$ and $f$ equal is $3(\lfloor s/2\rfloor + 1)(s-\frac{1}{2}\lfloor s/2\rfloor)$. Finally, using the above counts and Proposition~\ref{prop: Ns1s2s3}, the number of coefficients with none of $d$, $e$ and $f$ equal is given by
    \begin{equation*}
        N(s,s,s)-3\left(\left\lfloor\dfrac{s}{2}\right\rfloor+1\right)\left(s-\frac{1}{2}\left\lfloor\frac{s}{2}\right\rfloor\right)-\left\lfloor \dfrac{s}{2}\right\rfloor-1.
    \end{equation*}
    We now return to the constraints. First note that, using~\eqref{eqn: exponent relations}, $a+b+c=3s-2(d+e+f)$. Consequently, for $s$ odd, the relations satisfied by the coefficients $A$ above result in all of the coefficients with any of $d$, $e$ or $f$ equal being set to zero. What remains to be considered are the constraints among coefficients with none of $d$, $e$ and $f$ equal. In this case, there are five independent constraints, leading to these coefficients coming in groups of six. Thus, for $s$ odd, the dimension of the space of monomials reduces to
    \begin{equation*}
        \frac{1}{6}\left[N(s,s,s)-3\left(\left\lfloor\dfrac{s}{2}\right\rfloor+1\right)\left(s-\frac{1}{2}\left\lfloor\frac{s}{2}\right\rfloor\right)-\left\lfloor \dfrac{s}{2}\right\rfloor-1\right]=\frac{1}{24}s(s-1)(s+1).
    \end{equation*}
    Now we consider the case of $s$ even. In this case, none of the constraints on the coefficients $A$ set any of them equal to zero. When $d=e=f$ there are no constraints on the coefficients. When any two of $d$, $e$ and $f$ are equal, there are two independent constraints leading to triplets that are equal up to a sign. Thus, for $s$ even 
    the total dimension of the space of monomials reduces to
    \begin{equation*}
        \left\lfloor\dfrac{s}{2}\right\rfloor+1+\left(\left\lfloor\dfrac{s}{2}\right\rfloor+1\right)\left(s-\frac{1}{2}\left\lfloor\dfrac{s}{2}\right\rfloor\right)+\frac{1}{6}\left[N(s,s,s)-3\left(\left\lfloor\dfrac{s}{2}\right\rfloor+1\right)\left(s-\frac{1}{2}\left\lfloor\frac{s}{2}\right\rfloor\right)-\left\lfloor \dfrac{s}{2}\right\rfloor-1\right].
    \end{equation*}
    Since $s$ is even, this simplifies to $\frac{1}{4}\binom{s+4}{3}$.
\end{proof}

\begin{remark}
    The dimension count of Proposition~\ref{prop:Bose_threespins} is given by sequence A006918 in OEIS~\cite{oeis} for $n=s+1$, and admits a number of combinatorial interpretations.
\end{remark}

It remains to consider the case of only two of the spins and scaling dimensions equal.
\begin{proposition}\label{prop:Bose-two-spins}
    Let $(s_1,\Delta_1)=(s_2,\Delta_2)=(s,\Delta)$ and $s<s_3$. Then the dimension of the vector space of three-point functions satisfying Bose symmetry, ignoring the algebraic dependencies between monomials of multidegree $(s,s,s_3)$, is
     $$\frac{1}{2}[N(s,s,s_3)+(-1)^{s_3}N_1(s,s_3)]\,,\quad\mathrm{where}\quad N_1(s,s_3)=\left(\left\lfloor\dfrac{s_3}{2}   \right\rfloor+1\right)\left(s+1-\frac{1}{2}\left\lfloor\dfrac{s_3}{2}\right\rfloor\right)\,,$$
      and where $N(s,s,s_3)$ is the number of lattice points in the conformal polytope from Proposition~\ref{prop: Ns1s2s3} for $s_1=s_2=s$.
     For $(s_2,\Delta_2)=(s_3,\Delta_3)=(s,\Delta)$ and $s_1<s$ this dimension~is
      $$\frac{1}{2}[N(s_1,s,s)+(-1)^{s_1}N_1(s,s_1)]\,.$$
\end{proposition}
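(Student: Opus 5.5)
The plan is to mirror the proof of Proposition~\ref{prop:Bose_threespins}: describe how the transposition acts on monomials, split the monomials into orbits of size one and two, and count. First consider $(s_1,\Delta_1)=(s_2,\Delta_2)$, so that $\mathfrak S=\{e,(12)\}$.

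\textbf{Action of $(12)$ on the blocks.} By \eqref{eq:BosePHV} and the antisymmetry $V_{i,jk}=-V_{i,kj}$, the transposition $(12)$ sends
\[
V_1\mapsto -V_2,\qquad V_2\mapsto -V_1,\qquad V_3\mapsto -V_3,\qquad H_{12}\mapsto H_{12},\qquad H_{23}\leftrightarrow H_{13}.
\]
Next I check that the Laurent prefactor $\prod P_{ij}^{-c_{ij}}$ of Theorem~\ref{thm:sec5} is invariant. This holds because $\tau_1=\tau_2$ forces $c_{13}=c_{23}$. Hence Bose symmetry reduces to the linear constraints
\[
A_{(a,b,c,d,e,f)}=(-1)^{a+b+c}A_{(b,a,c,d,f,e)}
\]
on the coefficients, with exponents subject to \eqref{eqn: exponent relations}.

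\textbf{Orbit counting.} The involution $(a,b,c,d,e,f)\mapsto(b,a,c,d,f,e)$ preserves the lattice points counted by $N(s,s,s_3)$, so these points split into free orbits of size two and fixed points.
\begin{itemize}
\item Each free orbit contributes exactly one free parameter, whatever the sign.
\item A fixed point has $a=b$ and $e=f$. Its constraint reads $A=(-1)^{2a+c}A=(-1)^{c}A$, and $c=s_3-2e$, so the sign is $(-1)^{s_3}$. Thus all fixed points survive if $s_3$ is even and all vanish if $s_3$ is odd.
\end{itemize}
Writing $F$ for the number of fixed points, the dimension is
\[
\frac{N-F}{2}+F\cdot[s_3\text{ even}]=\tfrac12\bigl(N+(-1)^{s_3}F\bigr).
\]
It remains to show $F=N_1(s,s_3)$. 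For a fixed point, $e$ and $d$ determine everything: $a=b=s-d-e\ge0$ and $c=s_3-2e\ge0$. So I count pairs $(d,e)$ with $2e\le s_3$ and $d\le s-e$, which gives
\[
F=\sum_{e=0}^{\lfloor s_3/2\rfloor}(s-e+1)=\Bigl(\bigl\lfloor\tfrac{s_3}{2}\bigr\rfloor+1\Bigr)\Bigl(s+1-\tfrac12\bigl\lfloor\tfrac{s_3}{2}\bigr\rfloor\Bigr).
\]

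\textbf{Second case.} For $(s_2,\Delta_2)=(s_3,\Delta_3)$ with $s_1<s$, the transposition $(23)$ gives $V_1\mapsto -V_1$, $V_2\leftrightarrow -V_3$, $H_{23}$ fixed and $H_{12}\leftrightarrow H_{13}$. Fixed points now have $b=c$ and $d=f$, and the sign is $(-1)^{a+2b}=(-1)^{a}=(-1)^{s_1-2d}=(-1)^{s_1}$. The free parameters are $d\le\lfloor s_1/2\rfloor$ and $e\le s-d$, so
\[
\sum_{d=0}^{\lfloor s_1/2\rfloor}(s-d+1)=N_1(s,s_1).
\]
This yields $\tfrac12\bigl[N(s_1,s,s)+(-1)^{s_1}N_1(s,s_1)\bigr]$.

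\textbf{Main obstacle.} The delicate step is the range of summation in the fixed-point count of the first case. The closed form for $F$ tacitly needs $\lfloor s_3/2\rfloor\le s+1$; otherwise the upper limit must be $\min(\lfloor s_3/2\rfloor,s)$, because the terms $s-e+1$ become non-positive and must be dropped. I would check whether this is harmless, noting that when $s_3>2s$ the true count differs, and state the formula with the $\min$ if needed. The sign bookkeeping for the $V$'s under the transposition is the other point to verify carefully.
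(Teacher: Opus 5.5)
Your argument is the same as the paper's. The paper reduces Bose symmetry to the single family $A_{(a,b,c,d,e,f)}=(-1)^{a+b+c}A_{(b,a,c,d,f,e)}$, pairs off the non-fixed exponent vectors, and keeps or kills the fixed points $e=f$ according to the sign $(-1)^{s_3}$. You also spell out the signs of the $V_i$ under $(12)$ and the invariance of the $P_{ij}$-prefactor, which the paper leaves implicit.

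The obstacle you flag is genuine, and it is a defect of the statement rather than of your reasoning. The paper takes the closed form for $N_1(s,s_3)$ ``as in the proof of Proposition~\ref{prop:Bose_threespins}''. There the analogous constraint $\lfloor s/2\rfloor\le s$ holds automatically, but here it does not.

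The true fixed-point count is $\sum_{e=0}^{\min(\lfloor s_3/2\rfloor,s)}(s+1-e)$. The closed form equals $\sum_{e=0}^{\lfloor s_3/2\rfloor}(s+1-e)$, so the two agree if and only if $\lfloor s_3/2\rfloor\le s+1$. Your remark that the count already differs once $s_3>2s$ is slightly too pessimistic. The $e=s+1$ term vanishes, so the closed form is still correct for $s_3=2s+2$ and $s_3=2s+3$.

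For $s_3\ge 2s+4$ the closed form undercounts the fixed points by $\binom{\lfloor s_3/2\rfloor-s}{2}$, and the proposition fails. Two examples:
\begin{itemize}
\item For $s=0$, $s_3=4$, the only monomial is $V_3^4$. It is $(12)$-invariant, so the dimension is $1$, while the stated formula gives $\tfrac12[1+0]=\tfrac12$.
\item For $s=1$, $s_3=6$, there are $N=5$ monomials and $3$ fixed points, so the dimension is $4$. The formula gives $\tfrac72$.
\end{itemize}

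So your proof establishes the first formula exactly under the extra hypothesis $s<s_3\le 2s+3$. In general the upper limit of the fixed-point sum must be $\min(\lfloor s_3/2\rfloor,s)$, as you suggest. The second case of the proposition is unaffected, since $s_1<s$ forces $\lfloor s_1/2\rfloor<s$.
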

\begin{proof}
     When only two spins are equal, Bose symmetry produces just one family of constraints
    $$A_{(a,b,c,d,e,f)}=(-1)^{a+b+c}A_{(b,a,c,d,f,e)}\,.$$
    Let $N(s,s,s_3)$ denote the number of non-negative solutions of the system 
    $$a+d+f=s\,,\quad b+d+e=s\,,\quad c+e+f=s_3\,.$$
    This is the number $N(s_1,s_2,s_3)$ from Proposition \ref{prop: Ns1s2s3}, specialized to the case $s_1=s_2=s$. For a given choice of $s$ and $s_3$, let $N_1(s,s_3)$ be the number of non-negative integer solutions with $e=f$ (and hence $a=b$). As in the proof of Proposition~\ref{prop:Bose_threespins}, we find that
    $$N_1(s,s_3)=\left(\left\lfloor\dfrac{s_3}{2}\right\rfloor+1\right)\left( s+1-\frac{1}{2}\left\lfloor\frac{s_3}{2}\right\rfloor\right),$$
    and the dimension of the vector space of three-point functions satisfying Bose symmetry is
    $$\frac{1}{2}[N(s,s,s_3)+(-1)^{s_3}N_1(s,s_3)]\,.$$
    The argument for the case $(s_2,\Delta_2)=(s_3,\Delta_3)=(s,\Delta)$ and $s_1<s$ is entirely analogous. 
\end{proof}

To conclude this section, we present a table containing the dimension of the space of three-point functions respecting Bose symmetry. The code we used to obtain it is available at \cite{zenodo} and takes into account algebraic dependencies between the building blocks. Thus, the counts in Table \ref{table:bose} are slightly different from the ones given by Propositions \ref{prop:Bose_threespins} and \ref{prop:Bose-two-spins}. 

\begin{table}[h]
\centering
\small
\setlength{\tabcolsep}{3.5pt}
\renewcommand{\arraystretch}{1.2}

\begin{tabular}{|p{2cm}|c|c|c|c|c|c|c|c|c|c|c|c|c|c|c|c|c|}
\hline
Spins
& 001 & 011 & 111
& 002 & 012 & 112
& 003 & 013 & 113
& 023 & 123 & 222 & 223
& 033 & 133 & 233 & 333 \\
\hline
Independent monomials& 1 & 2 & 4
& 1 & 2 & 5
& 1 & 2 & 5
& 3 & 8 & 10
& 12 & 4 & 10 & 15 & 19
\\
\hline
Bose & 0 & 2 & 0 & 1 &
2 & 4 & 0 & 2 & 1 &3 & 8 & 4
& 4 & 4 &3 & 10 &1\\
\hline
\end{tabular}

\caption{Number of structures respecting Bose symmetry for $n=d=3$.} \label{table:bose}
\end{table}

\section{Partial conservation}\label{sec: conservation}
\noindent Beyond conformal covariance and Bose symmetry, certain $n$-point functions must satisfy additional constraints imposed by partial conservation.
These require that the $n$-point correlator be annihilated by a specific set of differential operators. 
These differential operators are defined in terms of the embedding-space vectors $P_i$ and $Z_i$. 
More precisely, suppose the $n$-point function $G_{\mathbf{s},\mathbf{\Delta}}$ has $\Delta_i=d-1+t_i$ for some $i\in\{1,\ldots,n\}$ and $t_i \in \{0, \dots, s_i-1\}$. 
Then, following \cite{DNW}, it must satisfy 
\begin{equation}\label{eqn: partial conservation}
    \left(\frac{\partial}{\partial P_i}\cdot D_{Z_i}\right)^{s_i-t_i}
    G_{\mathbf{s},\mathbf{\Delta}}(P_1,Z_1,\ldots,P_n,Z_n)=0,
\end{equation}
where $D_{Z}$ denotes the vector of differential operators given by
\begin{equation}\label{eqn:ThomasTodorov}
    D_{Z}
    =
    \left(\frac{d}{2}-1+Z\cdot\frac{\partial}{\partial Z}\right)\frac{\partial}{\partial Z}
    -\frac{1}{2}Z\frac{\partial^2}{\partial Z\cdot\partial Z}\,.
\end{equation}
If there are multiple $i\in\{1,\ldots,n\}$ for which this is the case, then $G_{\mathbf{s},\mathbf{\Delta}}$ must satisfy \eqref{eqn: partial conservation} for each such $i$. When bootstrapping $G_{\mathbf{s},\mathbf{\Delta}}$, these constraints serve as restrictions on the space of structures that can appear in $G_{\mathbf{s},\mathbf{\Delta}}$. This approach was applied to three-point functions in \cite{DanielNathan} to construct no-go theorems for the existence of certain types of particles in de Sitter space. The authors derived these constraints on a case-by-case basis for specific choices of spins $s_i$, scaling dimensions $\Delta_i$ and spatial dimension $d$. 
To avoid this case-by-case analysis, one can approach the problem from a different perspective. Instead, one can ask for the most general solution to the differential equation
$(\partial_{P_i}\cdot D_{Z_i})^{s_i - t_i}G_{\mathbf{s}, \mathbf{\Delta}} = 0$ that satisfies the constraints~\eqref{eq: Lorentz inv}-\eqref{eq: traceless}.
A similar approach was taken in \cite{Zhiboedov}, in the case where $s_i-t_i=1$.
The success of this approach relied on the ability to model the action of $\partial_{P_i}\cdot D_{Z_i}$ on three-point functions with $\Delta_i=d+s_i-2$ in terms of a differential operator written using the building blocks $H_{ij}$ and $V_{i,jk}$. This leads to the question: when can the action of $(\partial_{P_i}\cdot D_{Z_i})^{s_i - t_i}$ on $n$-point functions be expressed entirely in terms of the building blocks $P_{ij}$, $H_{ij}$ and $V_{i,jk}$?

\medskip

In this section, we reproduce the construction of \cite{Zhiboedov}, explain why it works and suggest its generalisation to arbitrary powers of $\partial_{P_i}\cdot D_{Z_i}$ for $n=3$. 
Note that, as will be made clear below, writing $(\partial_{P_i}\cdot D_{Z_i})^{s_i-t_i}$ in terms of the building blocks is not the same as composing the differential operator $\partial_{P_i}\cdot D_{Z_i}$ written in terms of the building blocks $s_i-t_i$ times. 
This is because the building blocks version of $\partial_{P_i}\cdot D_{Z_i}$ is not simply a coordinate transformation of the original differential operator.
In a sense, it is a differential operator that acts in the same way as $\partial_{P_i}\cdot D_{Z_i}$, when restricted to a specific vector subspace. When thought of as differential operators acting on the larger vector space of all three-point functions, their action on the same function can produce different results.
We will mainly consider three-point functions, and describe when and how our results can be applied to higher-point functions. 

\medskip

As in Section \ref{sec:matching_pols}, in what follows we will be thinking of the building blocks as formal variables. As described in Section~\ref{sec:ConfNpoint}, a three-point function~$G_{\mathbf{s},\mathbf{\Delta}}$ can be thought of as an element of the $\mathbb{C}$-vector space
\begin{equation}\label{eqn: 3point vec space}
    P^{-\tau_{12,3}}_{12}P^{-\tau_{23,1}}_{23}P^{-\tau_{31,2}}_{13}\cdot W_{\mathbf{s}}\,,
\end{equation}
where we have defined $\tau_{ij,k}:=\frac{1}{2}(s_i+s_j-s_k + \Delta_i + \Delta_j -\Delta_k)$, and
\begin{equation}
        W_{\mathbf{s}}:=\mathrm{span}_{\mathbb{C}}\left\{H^{h_{1}}_{12}H^{h_{2}}_{23}H^{h_{3}}_{13}V^{v_1}_{1,23}V^{v_2}_{2,31}V^{v_3}_{3,12}\quad:\quad \begin{aligned} v_1+h_1+h_3&=s_1\\v_2+h_1+h_2&=s_2\\v_3+h_2+h_3&=s_3\end{aligned}\right\}.
\end{equation}
To permit the action of the differential operators $(\partial_{P_i}\cdot D_{Z_i})^{s_i-t_i}$ on this vector space, we introduce a linear map
\begin{equation}
    f:\mathbb{C}(P_{ij},H_{ij},V_{ijk})\rightarrow\mathbb{C}(P_i,Z_i)\,.
\end{equation}
Choosing to work with the basis for \eqref{eqn: 3point vec space} given by the Laurent monomials
\begin{equation}\label{eqn: 3point monomial}
\frac{H^{h_{1}}_{12}H^{h_{2}}_{23}H^{h_{3}}_{13}V^{v_1}_{1,23}V^{v_2}_{2,31}V^{v_3}_{3,12}}{P^{\tau_{12,3}}_{12}P^{\tau_{23,1}}_{23}P^{\tau_{31,2}}_{13}}\,,
\end{equation}
we define $f$ to be the evaluation map that replaces each of the formal variables $P_{ij}$, $H_{ij}$ and $V_{i,jk}$ in the monomial \eqref{eqn: 3point monomial} with the embedding-space structures:
\begin{align}
      P_{ij}
  & \ \mapsto  \ P_i\cdot P_j\,,\\
  H_{ij}
  & \ \mapsto \ -2\bigl[(Z_i\cdot Z_j)(P_i\cdot P_j)
             -(Z_i\cdot P_j)(Z_j\cdot P_i)\bigr]\,,\\
  V_{ijk}
  & \ \mapsto \ \frac{(Z_i\cdot P_j)(P_i\cdot P_k)
           -(Z_i\cdot P_k)(P_i\cdot P_j)}
          {P_j\cdot P_k}\,.
\end{align}
The differential operators $(\partial_{P_i}\cdot D_{Z_i})^{s_i-t_i}$ can then act on the $\mathbb{C}$-vector space
\begin{equation}
    f(P^{-\tau_{12,3}}_{12}P^{-\tau_{23,1}}_{23}P^{-\tau_{31,2}}_{13}\cdot W_{\mathbf{s}})\,.
\end{equation}

\subsection{Existence of the lifted operator}
We begin with the following proposition.

\begin{proposition}\label{prop: diff operator 1}
    Fix $i\in\{1,2,3\}$. Allow for any $\mathbf{s}\in\mathbb{Z}_{\geq 0}^3$ with $s_i>0$ and $\Delta_{j}\in\mathbb{C}$ for $j\neq i$. Set $t_i=s_i-1$, $s_i\ge1$. Then the differential operator $(\partial_{P_i}\cdot D_{Z_i})^{s_i-t_i}$ is a map    \begin{equation}\label{eqn: preserve Pij}
    (\partial_{P_i}\cdot D_{Z_i})^{s_i-t_i}:f(P^{-\tau_{12,3}}_{12}P^{-\tau_{23,1}}_{23}P^{-\tau_{31,2}}_{13}\cdot W_{\mathbf{s}})\rightarrow f(P^{-\tau_{12,3}}_{12}P^{-\tau_{23,1}}_{23}P^{-\tau_{31,2}}_{13}\cdot W_{\mathbf{s}-(s_i-t_i)\mathbf{e}_i})\,,
\end{equation}
    if and only if $\Delta_i=d-1+t_i$. 
\end{proposition}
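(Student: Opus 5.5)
The plan is to reduce both directions to one question: when does $\partial_{P_i}\cdot D_{Z_i}$ preserve transversality? The other constraints~\eqref{eq: Lorentz inv}--\eqref{eq: traceless} are preserved for every $\Delta_i$, and Theorem~\ref{thm:sec5} (with Remark~\ref{rmk: no cross ratios}: no cross-ratios for $n=3$) then identifies the target space.

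\textbf{Step 1: constraints that are preserved for every $\Delta_i$.} Let $F\in f(P^{-\tau_{12,3}}_{12}P^{-\tau_{23,1}}_{23}P^{-\tau_{31,2}}_{13}\cdot W_{\mathbf{s}})$. I will check the following.
\begin{itemize}
\item Since $\partial_{P_i}\cdot D_{Z_i}$ is built from Lorentz-covariant contractions, it commutes with ${\rm O}(1,d+1)$, so $(\partial_{P_i}\cdot D_{Z_i})F$ is Lorentz invariant.
\item It lowers the $Z_i$-degree by one and the $P_i$-degree by one. The image is therefore homogeneous of degree $s_i-1$ in $Z_i$ and $-(\Delta_i+1)$ in $P_i$, and unchanged in the other variables.
\item The Thomas--Todorov operator is interior to the ideal $(Z_i^2)$, i.e.\ $D_{Z_i}(Z_i^2 g)\in (Z_i^2)$. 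I will also check that the operator is well defined modulo $(P_i^2, P_i\cdot Z_i)$ on functions of the given homogeneity. Otherwise I work with the specific representatives $f(\cdot)$ and evaluate on the locus afterwards.
\item The new data $(s_i-1,\Delta_i+1)$ leave $\tau_i=\Delta_i+s_i$ unchanged. Hence every $\tau_{jk,l}$ is unchanged, and the prefactor $P_{12}^{-\tau_{12,3}}P_{23}^{-\tau_{23,1}}P_{13}^{-\tau_{31,2}}$ is exactly the one in the target of~\eqref{eqn: preserve Pij}.
\end{itemize}
Consequently, by Theorem~\ref{thm: main} and Theorem~\ref{thm:sec5}, the image lies in the target space if and only if it is transverse in $Z_i$. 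Transversality in $Z_j$ for $j\neq i$ is automatic, because the operator does not involve $Z_j$.

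\textbf{Step 2: the transversality anomaly.} The infinitesimal gauge generator is $X_i=P_i\cdot\partial_{Z_i}$, and $F$ is transverse iff $X_iF=0$ on the constraint locus. I will compute the commutator $[X_i,\partial_{P_i}\cdot D_{Z_i}]$ on functions homogeneous of degree $-\Delta_i$ in $P_i$ and $s_i$ in $Z_i$, modulo $(P_i^2,P_i\cdot Z_i,Z_i^2)$.
\begin{itemize}
\item Use $[P\cdot\partial_Z,\partial_P^\mu]=-\partial_Z^\mu$ and the commutator of $P\cdot\partial_Z$ with $D_Z$. The latter should be expressible through $P\cdot D_Z$ and Euler operators.
\item After using $X_iF=0$ and reducing modulo the ideal, I expect the result to be $(\Delta_i-(d+s_i-2))\,c_{d,s_i}\,(\partial_{Z_i}\cdot D_{Z_i}\text{-type term})F$, with the remaining terms landing in the ideal. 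This is the standard statement that the divergence of a spin-$s$ primary is primary exactly at $\Delta=d+s-2$.
\item Setting $t_i=s_i-1$, the condition $\Delta_i=d+s_i-2$ is precisely $\Delta_i=d-1+t_i$. This gives the ``if'' direction.
\end{itemize}

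\textbf{Step 3: the converse.} The anomaly term might happen to vanish on all of $W_{\mathbf{s}}$, so for ``only if'' I need an explicit element on which it is nonzero when $\Delta_i\neq d-1+t_i$.
\begin{itemize}
\item The natural candidate is the monomial $V_{i,jk}^{s_i}\cdot(\text{prefactor})$ with $s_j=s_k=0$ contributions minimal. Its $Z$-dependence is only through $Z_i\cdot P_j$ and $Z_i\cdot P_k$, so the computation is tractable.
\item If $(\partial_{P_i}\cdot D_{Z_i})F$ is not annihilated by $X_i$, then it cannot lie in the target. Every element of $f(\cdots W_{\mathbf{s}-\mathbf{e}_i})$ is gauge invariant by Proposition~\ref{prop: inv}.
\item For $s_i=1$, the extra factor is plausibly a nonzero multiple of $(Z_i$-independent$)\cdot V$-type structure with coefficient $\Delta_i-d+1$, nonvanishing on the locus.
\end{itemize}

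\textbf{Main obstacle.} The main obstacle is the commutator computation in Step~2 modulo the non-free ideal $(P_i^2,P_i\cdot Z_i,Z_i^2)$. Derivatives in $P_i$ and $Z_i$ do not respect this ideal individually, so I will carry out the computation with explicit extensions off the cone (the representatives produced by $f$). I will then verify that the ambiguity terms cancel, using homogeneity through the Euler operators $P_i\cdot\partial_{P_i}=-\Delta_i$ and $Z_i\cdot\partial_{Z_i}=s_i$.
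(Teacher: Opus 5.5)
Your strategy differs from the paper's. The paper performs the rewriting in \texttt{Mathematica} and, for the converse, only asserts that leftover $P_i\cdot Z_j$ terms survive ``for generic values of $\mathbf{s}$''.

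Steps~1--2 go through, and your main obstacle does not arise. The representatives $f(\cdot)$ never involve $P_i^2$, $P_i\cdot Z_i$ or $Z_i^2$. They are exactly homogeneous off the locus and are annihilated by $X_i=P_i\cdot\partial_{Z_i}$ identically, so nothing needs to be reduced modulo the ideal. You should drop the claim that $D_{Z_i}$ is interior to $(Z_i^2)$: with the normalisation $\frac d2-1$ in $m=d+2$ dimensions one gets $D_Z^\mu(Z^2g)=-2Z^\mu g+Z^2(\cdots)$, and you do not need it anyway.

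Using $[X_i,\partial_{P_i}^\mu]=-\partial_{Z_i}^\mu$, $[X_i,D_{Z_i}^\mu]=X_i\partial_{Z_i}^\mu-\frac12P_i^\mu\,\partial_{Z_i}\cdot\partial_{Z_i}$ and $\partial_{Z_i}\cdot D_{Z_i}=(\frac12Z_i\cdot\partial_{Z_i}-1)\,\partial_{Z_i}\cdot\partial_{Z_i}$, one obtains the exact identity
\[
X_i\bigl(\partial_{P_i}\cdot D_{Z_i}F\bigr)=\tfrac12\,(\Delta_i-d-s_i+2)\,\partial_{Z_i}\cdot\partial_{Z_i}F .
\]
So the anomaly is the $Z_i$-Laplacian of $F$. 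It is not a multiple of $\partial_{Z_i}\cdot D_{Z_i}F=\frac{s_i-4}{2}\,\partial_{Z_i}\cdot\partial_{Z_i}F$, which would wrongly vanish at $s_i=4$. Divide the image by the invariant prefactor and apply Theorems~\ref{thm: main} and~\ref{thm:sec5}; this proves the ``if'' direction for every $\mathbf{s}$ structurally rather than by computer algebra. What it does not give you is the explicit image, i.e.\ the operator $\mathcal{D}_{i,1}$ that the paper's computation produces.

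The genuine gap is Step~3 at $s_i=1$, and it cannot be closed. Every $F$ in the source is then linear in $Z_i$, so $\partial_{Z_i}\cdot\partial_{Z_i}F=0$ and the image is gauge invariant for \emph{every} $\Delta_i$. By your own Step~1 it then lies in the target, so the witness ``with coefficient $\Delta_i-d+1$'' does not exist.

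Concretely, take $\mathbf{s}=(1,0,0)$, $i=1$, and $\Pi:=P_{12}^{-\tau_{12,3}}P_{23}^{-\tau_{23,1}}P_{13}^{-\tau_{31,2}}$. On linear functions $D_{Z_1}=\frac{d-2}{2}\partial_{Z_1}$, and one computes
\[
\partial_{P_1}\cdot D_{Z_1}\,f(\Pi\,V_{1,23})=\tfrac{d-2}{2}\,(\Delta_2-\Delta_3)\,\Pi\;\in\; f(\Pi\cdot W_{(0,0,0)})\qquad\text{for all }\Delta_1\in\mathbb{C}.
\]
So the ``only if'' half of the statement is false for $s_i=1$. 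The position-space divergence of $\langle J\phi\phi\rangle$ does carry a non-covariant term proportional to $d-1-\Delta_1$, but $\partial_{P_1}\cdot D_{Z_1}$ acting on the fixed representatives does not reproduce it.

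For $s_i\ge2$ your plan works. Take $F=\Pi\,V_{1,23}^{s_1}V_{2,31}^{s_2}V_{3,12}^{s_3}$, and let $V_{i,jk}$ denote the factor carrying $Z_i$. On the locus,
\[
\partial_{Z_i}\cdot\partial_{Z_i}F=-2s_i(s_i-1)\,\frac{P_{ij}P_{ik}}{P_{jk}}\,V_{i,jk}^{-2}F\neq0 .
\]
Hence for $\Delta_i\neq d+s_i-2$ the image is not invariant under $Z_i\mapsto Z_i+\alpha_iP_i$. It therefore cannot lie in the target, all of whose elements are invariant by Proposition~\ref{prop: inv}.
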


\begin{remark}
    Although not proven symbolically due to being computationally very demanding, the same statement can easily be verified to hold for $s_i - t_i = 2$ for a vast number of choices of $s_1$, $s_2$ and $s_3$ (see \cite{zenodo}). In particular, we believe that the result for $s_i-t_i=1$ extends to arbitrary $s_i-t_i$. This is made precise in Conjecture~\ref{conj: 3point diff op conjecture}.
\end{remark}

\begin{proof}
The action of the differential operators $\partial_{P_i}\cdot D_{Z_i}$ on
\begin{equation}\label{eqn: embedding space monomial}
    f\left(\frac{H^{h_{1}}_{12}H^{h_{2}}_{23}H^{h_{3}}_{13}V^{v_1}_{1,23}V^{v_2}_{2,31}V^{v_3}_{3,12}}{P^{\tau_{12,3}}_{12}P^{\tau_{23,1}}_{23}P^{\tau_{31,2}}_{13}}\right),
\end{equation}
can be computed explicitly. In \texttt{Mathematica} \cite{zenodo}, we show that the results can be rewritten in terms of the building blocks by reversing the action of $f$ when $\Delta_i=d-2+s_i$. In particular, we explicitly perform the rewriting in the cases where we have the right scaling dimension, and show that for other scaling dimensions, there are leftover terms containing factors of $\, P_i \cdot Z_j\,$. For generic values of $\mathbf{s}$, these terms cannot be rewritten in terms of the building blocks. This can be shown by differentiating the expression with respect to these residual embedding-space inner products, and noticing that this is written solely in terms of embedding-space structures and can only be vanishing if $\Delta_i$ takes the value specified above.

\smallskip

The explicit computation also reveals that the powers of the overall factors of $P_i\cdot P_j$ are invariant and the spins are reduced by one.
In other words, for $\Delta_i=d-2+s_i$ and $s_i-t_i=1$, we find \eqref{eqn: preserve Pij}. The reduction in spin follows from the fact that elements of the vector space \eqref{eqn: 3point vec space} are polynomials in the $Z_i$. From the form of $\partial_{P_i}\cdot D_{Z_i}$ it is clear that the action of $\partial_{P_i}\cdot D_{Z_i}$ on polynomials in the $Z_i$ reduces their degree in the $Z_i$ by one.
\end{proof}

\begin{remark}
Note that this feature of preserving the overall factor in the $P_{ij}$ is not generic. For $\Delta_i$ differing from the value given above, the differential operator $\partial_{P_i}\cdot D_{Z_i}$
does not preserve the overall factor in the $P_{ij}$. In addition, perhaps counter-intuitively, the overall factor of $P^{-\tau_{12,3}}_{12}P^{-\tau_{23,1}}_{23}P^{-\tau_{31,2}}_{13}$ is crucial in Proposition \ref{prop: diff operator 1}. Indeed, via direct computation one can verify that the differential operator $(\partial_{P_i}\cdot D_{Z_i})^{s_i-t_i}$ does \emph{not} map $f(W_{\mathbf{s}})$ to $f(W_{\mathbf{s}-(s_i-t_i)\mathbf{e}_i})$. To see how this could be the case, consider the example where $s_i-t_i=1$. In this case, we have
\begin{equation}\label{eqn: 1st order action}
    {\small\partial_{P_i}\cdot D_{Z_i}\Bigg(f\Bigg(\frac{H^{h_{1}}_{12}H^{h_{2}}_{23}H^{h_{3}}_{13}V^{v_1}_{1,23}V^{v_2}_{2,31}V^{v_3}_{3,12}}{P^{\tau_{12,3}}_{12}P^{\tau_{23,1}}_{23}P^{\tau_{31,2}}_{13}}\Bigg)\Bigg)=\partial_{P_i}\cdot\Bigg(\frac{D_{Z_i}(f(H^{h_{1}}_{12}H^{h_{2}}_{23}H^{h_{3}}_{13}V^{v_1}_{1,23}V^{v_2}_{2,31}V^{v_3}_{3,12})}{f(P^{\tau_{12,3}}_{12}P^{\tau_{23,1}}_{23}P^{\tau_{31,2}}_{13})}\Bigg).}
\end{equation}
However,
\begin{equation*}
    \partial_{P_i}(f(P^{-\tau_{12,3}}_{12}P^{-\tau_{23,1}}_{23}P^{-\tau_{31,2}}_{13}))\neq \mathbf{0}\,,
\end{equation*}
and so~\eqref{eqn: 1st order action} is not equal to
\begin{equation*}
    \frac{\partial_{P_i}\cdot D_{Z_i}(f(H^{h_{1}}_{12}H^{h_{2}}_{23}H^{h_{3}}_{13}V^{v_1}_{1,23}V^{v_2}_{2,31}V^{v_3}_{3,12}))}{f(P^{\tau_{12,3}}_{12}P^{\tau_{23,1}}_{23}P^{\tau_{31,2}}_{13})}\,.
\end{equation*}
The existence of the overall factor in the $P_{ij}$, with the right dependence on $s_i$ and $t_i$, is essential for Proposition \ref{prop: diff operator 1} to hold.
\end{remark}

The expression \eqref{eqn: preserve Pij} is suggestive of the fact that, for the right value of the scaling dimension, the action of the differential operator $\partial_{P_i}\cdot D_{Z_i}$
can be modelled by the action of a differential operator in the $H_{ij}$ and $V_{i,jk}$ that acts solely on elements of $W_{\mathbf{s}}$. Indeed, it implies the following theorem.

\begin{theorem}\label{thm: diff op 1}
Fix $i\in\{1,2,3\}$. Allow for any $\mathbf{s}\in\mathbb{Z}_{\geq 0}^3$ and $\Delta_{j}\in\mathbb{C}$ for $j\neq i$. Set $t_i=s_i-1$, $s_i\ge1$. Then there exists a differential operator $\mathcal{D}_{i,s_i-t_i}$ acting on $\mathbb{C}[H_{ij},V_{i,jk}]$ satisfying
\begin{equation*}
(\partial_{P_i}\cdot D_{Z_i})^{s_i-t_i}\left(f\left(\frac{H^{h_{1}}_{12}H^{h_{2}}_{23}H^{h_{3}}_{13}V^{v_1}_{1,23}V^{v_2}_{2,31}V^{v_3}_{3,12}}{P^{\tau_{12,3}}_{12}P^{\tau_{23,1}}_{23}P^{\tau_{31,2}}_{13}}\right)\right)=f\left(\frac{\mathcal{D}_{i,s_i-t_i}(H^{h_1}_{12}H^{h_2}_{23}H^{h_3}_{13}V^{v_1}_{1,23}V^{v_2}_{2,31}V^{v_3}_{3,12})}{P^{\tau_{12,3}}_{12}P^{\tau_{23,1}}_{23}P^{\tau_{31,2}}_{13}}\right),
\end{equation*}
if and only if $\Delta_i=d-1+t_i$, where the $h_i$ and $v_i$ must satisfy
\begin{equation}
    v_1+h_1+h_3=s_1\,,\quad v_2+h_1+h_2=s_2\,,\quad 
    v_3+h_2+h_3=s_3\,.
\end{equation}
\end{theorem}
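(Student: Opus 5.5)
The plan is to deduce the theorem from Proposition~\ref{prop: diff operator 1} together with a linear-algebra construction of $\mathcal{D}_{i,1}$ on the finite-dimensional spaces $W_{\mathbf{s}}$.

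\emph{Reduction to linear algebra.} Since $s_i-t_i=1$, the condition $\Delta_i=d-1+t_i$ reads $\Delta_i=d-2+s_i$. Fix $\mathbf{s}$ and the $\Delta_j$ with $j\neq i$. Write $\Pi_{\mathbf{s}}:=P^{-\tau_{12,3}}_{12}P^{-\tau_{23,1}}_{23}P^{-\tau_{31,2}}_{13}$, where the exponents depend on $\mathbf{s}$ and $\mathbf{\Delta}$. The key observation is that $\tau_{12,3},\tau_{23,1},\tau_{31,2}$ depend on $\mathbf{s}$ only through $\tau_k=s_k+\Delta_k$. We then need to check that $\Pi_{\mathbf{s}}$ at the target spin $\mathbf{s}-\mathbf{e}_i$ agrees with $\Pi_{\mathbf{s}}$ at $\mathbf{s}$. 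This is exactly the statement ``the powers of the overall factors of $P_i\cdot P_j$ are invariant'' from Proposition~\ref{prop: diff operator 1}, so I take that proposition as the input.

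\emph{The ``if'' direction.} Assume $\Delta_i=d-1+t_i$. By Proposition~\ref{prop: diff operator 1}, each image $(\partial_{P_i}\cdot D_{Z_i})\,f(\Pi_{\mathbf{s}} M)$ of a basis monomial $M$ of $W_{\mathbf{s}}$ lies in $f(\Pi_{\mathbf{s}}W_{\mathbf{s}-\mathbf{e}_i})$. Hence there is some $M'\in W_{\mathbf{s}-\mathbf{e}_i}$ with $(\partial_{P_i}\cdot D_{Z_i})f(\Pi_{\mathbf{s}}M)=f(\Pi_{\mathbf{s}}M')$. Choose such an $M'$ for each monomial $M$; if $f$ is not injective on $W_{\mathbf{s}-\mathbf{e}_i}$ because of Gram relations, fix the choice once and for all. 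This defines a linear map $L_{\mathbf{s}}\colon W_{\mathbf{s}}\to W_{\mathbf{s}-\mathbf{e}_i}$ for every $\mathbf{s}$.

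Next I would assemble the $L_{\mathbf{s}}$ into one operator $\mathcal{D}_{i,1}$ on $\mathbb{C}[H_{ij},V_{i,jk}]=\bigoplus_{\mathbf{s}}W_{\mathbf{s}}$ by acting degree-wise. To see that this is a genuine differential operator, I would use the standard fact that every $\mathbb{C}$-linear endomorphism of a polynomial ring in finitely many variables is a (possibly infinite-order, but locally finite) differential operator $\sum_\beta c_\beta(x)\partial^\beta$. Better, I would exhibit $\mathcal{D}_{i,1}$ explicitly. To do so I would compute $\partial_{P_i}\cdot D_{Z_i}$ on the generic monomial \eqref{eqn: embedding space monomial} by the chain rule, treating $H_{jk},V_{j,kl}$ as composite functions. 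The result is a finite sum of terms of the form (polynomial in the building blocks) times first and second derivatives in $H_{ij}$ and $V_{i,jk}$, with coefficients depending on the exponents only through $\mathbf{s},\Delta_i,d$. That is a second-order operator, and it should reproduce the operator of \cite{Zhiboedov}.

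\emph{The ``only if'' direction.} Suppose such a $\mathcal{D}_{i,1}$ exists. Then the image of $f(\Pi_{\mathbf{s}}W_{\mathbf{s}})$ lands in $f(\Pi_{\mathbf{s}}W_{\mathbf{s}-\mathbf{e}_i})$, so by the converse half of Proposition~\ref{prop: diff operator 1} we get $\Delta_i=d-1+t_i$.

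\emph{Main obstacle.} The hard step is the chain-rule computation showing that the residual terms involving isolated $P_i\cdot Z_j$ (those not assembling into $H$ or $V$) cancel exactly when $\Delta_i=d-2+s_i$. That this cancellation occurs is the content of Proposition~\ref{prop: diff operator 1}, which the paper verifies with Mathematica. Turning the cancellation into a uniform formula requires tracking Euler-operator identities. Here $Z_i\cdot\partial_{Z_i}$ gives $s_i$ and $P_i\cdot\partial_{P_i}$ gives the homogeneity $-\Delta_i$, both valid on the null locus \eqref{eq:null-cone}. These identities are what make the coefficients depend on $(h,v)$ only through $\mathbf{s}$, and hence make the operator independent of the chosen monomial.
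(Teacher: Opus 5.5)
Your overall logic matches the paper's. There, the theorem is presented as a consequence of Proposition~\ref{prop: diff operator 1} (``it implies the following theorem''), existence is backed by the explicit construction that follows, and the ``only if'' half is the converse of that proposition. Your abstract step is a fine way to make ``implies'' rigorous, and you do not need infinite-order operators. The statement is for one fixed $\mathbf{s}$, since the condition $\Delta_i=d-2+s_i$ ties $\Delta_i$ to $s_i$, and $W_{\mathbf{s}}$ is finite-dimensional. So the usual triangular recursion realizes any linear map $W_{\mathbf{s}}\to W_{\mathbf{s}-\mathbf{e}_i}$ as $\sum_\beta c_\beta\partial^\beta$, with polynomial $c_\beta$ and $|\beta|$ bounded by the degrees of the monomials. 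What this does not give you, and what the paper's construction does, is a single finite-order operator independent of the spins.

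The explicit route you call ``better'' is, however, misdescribed: $\partial_{P_i}\cdot D_{Z_i}$ has order three.
\begin{itemize}
\item On functions of degree $s_i$ in $Z_i$, your Euler reduction turns it into $(\tfrac{d}{2}+s_i-2)\,\partial_{P_i}\cdot\partial_{Z_i}-\tfrac12(Z_i\cdot\partial_{P_i})(\partial_{Z_i}\cdot\partial_{Z_i})$.
\item For $G$ a polynomial in the blocks $B_a$, the last piece contributes $-\tfrac12\sum_{a,b,c}G_{abc}\,(Z_i\cdot\partial_{P_i}B_c)(\partial_{Z_i}B_a\cdot\partial_{Z_i}B_b)$, and this does not vanish.
\item Concretely, for $i=1$ the derivation $Z_1\cdot\partial_{P_1}$ annihilates $H_{12},H_{13},V_1$ but not $V_2$. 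On the null locus, $(\partial_{Z_1}V_1)^2=-2P_{12}P_{13}/P_{23}$ and $\tfrac{P_{12}P_{13}}{P_{23}}\,Z_1\cdot\partial_{P_1}V_2=\tfrac12H_{12}+V_1V_2$.
\item So the chain rule produces the term $(\tfrac12H_{12}+V_1V_2)\,\partial_{V_1}^2\partial_{V_2}$, and nothing else in its output involves $\partial_{V_1}^2\partial_{V_2}$.
\end{itemize}
The operator you would obtain is therefore third order. This is consistent with the paper's ansatz of order $3(s_i-t_i)$ and with the third-order part $\mathcal{D}^{(3)}_{1,1}$, which contains $\partial_{V_1}^2\partial_{V_2}$, $\partial_{H_{12}}^3$ and similar terms. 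Your description of it as a second-order operator is incorrect.

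Also, $P_i\cdot\partial_{P_i}$ never occurs in $\partial_{P_i}\cdot D_{Z_i}$. The $\Delta$'s enter only when $\partial_{P_i}$ hits $\Pi_{\mathbf{s}}$, which gives $\mathbf{s}$-dependent coefficients. Trading those for Euler operators, to reach a spin-independent operator like the paper's, raises the order again.

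Finally, your ``only if'' step, like the paper's, rests on the converse in Proposition~\ref{prop: diff operator 1}. The paper argues that converse only for generic $\mathbf{s}$, and it really does fail for small spins. When $s_i=1$, no $Z_i$ survives after applying the operator, so the image is automatically covariant. For instance, for $\mathbf{s}=(1,0,0)$ and $i=1$ one finds $(\partial_{P_1}\cdot D_{Z_1})f(\Pi_{\mathbf{s}}V_1)=\tfrac{d-2}{2}(\Delta_2-\Delta_3)\,\Pi_{\mathbf{s}}$ for every $\Delta_1$. That direction must therefore be restricted to generic $\mathbf{s}$.
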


\begin{remark}
    We note that Theorem \ref{thm: diff op 1} is exactly what one would expect from the following physicist-oriented argument. As shown in \cite[p.~9-10]{Osborn}, the derivative of a bosonic conformal primary of spin $s\ge1$ and scaling dimension $\Delta$ is a conformal primary iff $\Delta=d-2+s$. At the level of $n$-point functions, this means that the action of $\partial_{P_i}\cdot D_{Z_i}$ on an $n$-point function of conformal primaries is only an $n$-point function of conformal primaries when $\Delta_i=d-2+s_i$. This means that the action of $\partial_{P_i}\cdot D_{Z_i}$ on such an $n$-point function can only be written in terms of the embedding-space structures $P_{ij},\, H_{ij}$ and $V_{i,jk}$ when $\Delta_i=d-2+s_i$. 
    Repeating the argument of \cite{Osborn} for higher order derivatives, one can see how this statement could be generalised to cases where $s_i-t_i>1$.
\end{remark}
Based on the previous remark, we make the following conjecture.
\begin{conjecture}\label{conj: higher order derivative}
    When acting on $n$-point functions $G_{\mathbf{s},\mathbf{\Delta}}$, the differential operators ${(\partial_{P_i}\cdot D_{Z_i})^{s_i-t_i}}$ can be written entirely in terms of the $H_{ij}$, $V_{i,jk}$ and the cross-ratios $u_{ijkl}$ if and only if $$\Delta_i=d-1+t_i.$$
\end{conjecture}
In the case of three-point functions we can formulate this conjecture more precisely as an extension of Theorem \ref{thm: diff op 1}.
\begin{conjecture}\label{conj: 3point diff op conjecture}
    Fix $i\in\{1,2,3\}$. Let $\mathbf{s}\in\mathbb{Z}_{\geq 0}^3$, $s_i>0$, $\Delta_{j}\in\mathbb{C}$ for $j\neq i$ and depth $t_i\in\{0,\ldots,s_i-1\}$. There exists a differential operator $\mathcal{D}_{i,s_i-t_i}$ acting on $\mathbb{C}[H_{ij},V_{i,jk}]$ satisfying
    \begin{equation*}
    (\partial_{P_i}\cdot D_{Z_i})^{s_i-t_i}\left(f\left(\frac{H^{h_{1}}_{12}H^{h_{2}}_{23}H^{h_{3}}_{13}V^{v_1}_{1,23}V^{v_2}_{2,31}V^{v_3}_{3,12}}{P^{\tau_{12,3}}_{12}P^{\tau_{23,1}}_{23}P^{\tau_{31,2}}_{13}}\right)\right)=f\left(\frac{\mathcal{D}_{i,s_i-t_i}(H^{h_1}_{12}H^{h_2}_{23}H^{h_3}_{13}V^{v_1}_{1,23}V^{v_2}_{2,31}V^{v_3}_{3,12})}{P^{\tau_{12,3}}_{12}P^{\tau_{23,1}}_{23}P^{\tau_{31,2}}_{13}}\right),
    \end{equation*}
    if and only if $\Delta_i=d-1+t_i$, where the $h_i$ and $v_i$ must satisfy
    \begin{equation*}
        v_1+h_1+h_3=s_1\,,\quad
        v_2+h_1+h_2=s_2\,,\quad 
        v_3+h_2+h_3=s_3\,.
    \end{equation*}
\end{conjecture}

\begin{remark}
    Note that our results in this section consider only the existence of the operator $\mathcal{D}_{i,s_i-t_i}$, not its uniqueness. We do not know whether the operator  $\mathcal{D}_{i,s_i-t_i}$ is unique for all choices of $s_i-t_i$. For general $d$, our computation that we describe below shows that it is unique in the case $s_i-t_i=1$ and suggests it is not unique already for $s_i-t_i=2$ in $d = 3$. 
\end{remark}

\subsection{Construction and explicit form}
We now present a computation that allows one to construct the differential operators $\mathcal{D}_{i,s_i-t_i}$.
This computation treats $s_1$, $s_2$, $s_3$, $\Delta_2$ and $\Delta_3$ as parameters. 
\newline

\noindent\textbf{Input:} A choice of $i\in\{1,2,3\}$, $s_i-t_i\in\mathbb{N}$ and $d\ge3$.

\begin{enumerate}
\item Act with the differential operator $(\partial_{P_i}\cdot D_{Z_i})^{s_i-t_i}$ on the monomial
\begin{equation*}
    \frac{H^{h_{1}}_{12}H^{h_{2}}_{23}H^{h_{3}}_{13}V^{v_1}_{1,23}V^{v_2}_{2,31}V^{v_3}_{3,12}}{P^{\tau_{12,3}}_{12}P^{\tau_{23,1}}_{23}P^{\tau_{31,2}}_{13}}\,,
\end{equation*}
where we have set
\begin{align*}
    \Delta_i=d-1+t_i, \quad v_1=s_1-h_{1}-h_{3}, \quad v_2=s_2-h_1-h_2, \quad v_3=s_3-h_3-h_2\,.
\end{align*}
Here $h_1$, $h_2$ and $h_3$ should be treated as parameters.
\item Write the result of this action in terms of the building blocks by reversing the action of~$f$. That this is possible is guaranteed by Proposition \ref{prop: diff operator 1} when $s_i-t_i=1$ for any $d\ge3$ and is expected to hold for other choices of $s_i-t_i$. Then multiply the entire expression by $P^{\tau_{12,3}}_{12}P^{\tau_{23,1}}_{23}P^{\tau_{31,2}}_{13}$
to cancel the overall factor in the $P_{ij}$. Call the result $X$.
\item Construct the most general linear differential operator $\mathcal{D}_{i,s_i-t_i}$ of order $3(s_i-t_i)$ in variables $H_{ij}$, $V_{i,jk}$, without partial derivatives with respect to $H_{jk}$ with neither of $j,k$ equal to $i$. That such partial derivatives should not appear in a differential operator modelling $(\partial_{P_i}\cdot D_{Z_i})^{s_i-t_i}$ is clear from the form of $\partial_{P_i}\cdot D_{Z_i}$ in terms of the embedding-space vectors. 
Leave placeholders for the coefficients of the differentials. These coefficients should be understood as being elements of the polynomial ring $\mathbb{C}[H_{ij},V_{i,jk}]$.
    
\item Apply $\mathcal{D}_{i,s_i-t_i}$ to the monomial $H^{h_1}_{12}H^{h_2}_{23}H^{h_3}_{13}V^{v_1}_{1,23}V^{v_2}_{2,31}V^{v_3}_{3,12}$. Call the result $Y$.

\item Fix the coefficients in $\mathcal{D}_{i,s_i-t_i}$ by comparing $X$ and $Y$. 
This can be done by evaluating the expression $X=Y$ at a sufficiently large number of choices of values for the $s_j$ and $\Delta_{j}$, $j\neq i$ with appropriate $h_j$, such that $s_i \geq h_{i} + h_{i-1}$. 
Eventually one arrives at a set of equations linear in the free coefficients in the differential operator ansatz $\mathcal{D}_{i, s_i-t_i}$ that can be solved in terms of $H_{ij}$, $V_{i,jk}$, $\Delta_2$ and $\Delta_3$. 
It could occur that not all the coefficients are fixed by this process. 
If after a large number of iterations no new coefficient is fixed, we assume that the unfixed coefficients are free and thus there are multiple operators $\mathcal{D}_{i,s_i-t_i}$. 
To make one choice of the operator, we set these to zero. 
The chosen operator is guaranteed to act correctly on the spaces $W_\mathbf{s}$ whose bases we used in this computation to obtain $\mathcal{D}_{i,s_i-t_i}$ and we conjecture it acts correctly on all~$W_\mathbf{s}$. 
\end{enumerate}

\noindent\textbf{Output:} The differential operator $\mathcal{D}_{i,s_i-t_i}$.\newline

Performing this computation for $i=1$ and $s_i-t_i=1$, we obtain a unique operator $\mathcal{D}_{i,1}$ and reproduce the result of \cite{Zhiboedov}.
Note that the author of \cite{Zhiboedov} uses a slightly different definition of the building blocks.
Thus, at face value the differential operator presented here looks different to that in \cite[Appendix A]{Zhiboedov}.

\begin{proposition}
    The differential operator $\mathcal{D}_{1,1}$ for arbitrary $d$ is given by
    \begin{equation}
    \mathcal{D}_{1,1} \;=\; \frac{d-2}{2}\,\mathcal{D}_{1,1}^{(1)} + \tfrac{1}{2}\,\mathcal{D}_{1,1}^{(2)} + \mathcal{D}_{1,1}^{(3)},
\end{equation}
where the three components are given explicitly below.
\begin{align}
\mathcal{D}^{(1)}_{1,1} &=
    \frac{1}{2}(\Delta_3 - \Delta_2)\,\partial_{V_1}
    + (d - 1 + \Delta_3 - \Delta_2)\,V_2 \,\partial_{H_{12}}
    - (2\leftrightarrow 3),\\[8pt]
\mathcal{D}_{1,1}^{(2)} &= 
    \frac{1}{2}(\Delta_2 - \Delta_3) V_1\,\partial_{V_1}^2 + (3d - 2 - 2\Delta_2 + 2\Delta_3)\, H_{12} V_2\,\partial_{H_{12}}^2 \notag\\
&+ 2\Big[(\Delta_2 - \Delta_3 - 1)\, H_{12} + (d-2)\,V_1 V_2\Big]\partial_{V_1}\partial_{H_{12}} \notag\\[3pt]
&+ (d-2)V_2^2\,\partial_{V_2}\partial_{H_{12}}
 - (d-2)(H_{23} + V_2 V_3)\,\partial_{V_2}\partial_{H_{31}} \notag\\[3pt]
&+ \frac{1}{2}\Big[
    3(d-2)H_{12}V_3
    - (\Delta_2 - \Delta_3)(H_{23}V_1  + 2 \,H_{12}V_3 + 2V_1 V_2 V_3)
  \Big]\partial_{H_{12}}\partial_{H_{31}} \notag\\
& - (2\leftrightarrow3), \notag \\[8pt]
\mathcal{D}_{1,1}^{(3)} &= 
    \frac{1}{2}(H_{12} + V_1 V_2)\,\partial_{V_1}^2\partial_{V_2} - \frac{1}{2}H_{12} V_1\,\partial_{V_1}^2\partial_{H_{12}} \notag\\
&- H_{12} V_2\,\partial_{V_1}\partial_{V_2}\partial_{H_{12}}
  - (H_{23}V_1 + H_{12}V_3 + 2V_1 V_2 V_3)\,\partial_{V_1}\partial_{V_2}\partial_{H_{31}} \notag\\[3pt]
&+ 2H_{12} V_1 V_2\,\partial_{V_1}\partial_{H_{12}}^2 
 + H_{12}^2 V_2\,\partial_{H_{12}}^3 \notag\\[3pt]
&  - (H_{12}H_{23}V_1 - 2H_{12}H_{31}V_2 + H_{12}^2 V_3 + 2H_{12}V_1 V_2 V_3)\,\partial_{H_{12}}^2\partial_{H_{31}} \notag\\[3pt]
&+ H_{12} V_2^2\,\partial_{V_2}\partial_{H_{12}}^2 + (H_{12}H_{23} + H_{12}V_2 V_3)\,\partial_{V_3}\partial_{H_{12}}^2\notag \\[3pt]
& + (H_{23}V_1 V_2 + H_{31}V_2^2 + H_{12}V_2 V_3 + 2V_1 V_2^2 V_3)\,\partial_{V_2}\partial_{H_{12}}\partial_{H_{31}} \notag\\[3pt]
& - (2\leftrightarrow 3), \notag
\end{align}
where we have adopted the shorthand $V_1:=V_{1,23}$, $V_2:=V_{2,31}$ and $V_3:=V_{3,12}$, and where $(2\leftrightarrow 3)$ denotes the image of the preceding terms under the transposition of labels 2 and 3. The differential operators $\mathcal{D}_{2,1}$ and $\mathcal{D}_{3,1}$ can be obtained from $\mathcal{D}_{1,1}$ through a simple relabelling.
\end{proposition}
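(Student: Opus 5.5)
Since the operator $\mathcal{D}_{1,1}$ is claimed to be the output of the construction algorithm with $i=1$ and $s_1-t_1=1$, the plan is to verify directly that it satisfies the defining identity of Theorem~\ref{thm: diff op 1}. Concretely, for $\Delta_1=d-2+s_1$ and arbitrary $s_2,s_3,\Delta_2,\Delta_3$ and exponents $h_1,h_2,h_3$ (with $v_j$ fixed by the degree constraints), we must show
\[
(\partial_{P_1}\cdot D_{Z_1})\, f\!\left(\frac{M}{P_{12}^{\tau_{12,3}}P_{23}^{\tau_{23,1}}P_{13}^{\tau_{31,2}}}\right)
= f\!\left(\frac{\mathcal{D}_{1,1}M}{P_{12}^{\tau_{12,3}}P_{23}^{\tau_{23,1}}P_{13}^{\tau_{31,2}}}\right),
\]
where $M=H_{12}^{h_1}H_{23}^{h_2}H_{13}^{h_3}V_1^{v_1}V_2^{v_2}V_3^{v_3}$. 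Uniqueness then follows from the linear-algebra step of the algorithm, which has a unique solution once the ansatz is restricted to operators without $\partial_{H_{23}}$.

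The left side is computed with the chain rule. First, we record the first and second derivatives of the building blocks with respect to $P_1$ and $Z_1$. Only $P_{12},P_{13},H_{12},H_{13},V_1,V_2,V_3$ depend on $P_1$, and only $H_{12},H_{13},V_1$ depend on $Z_1$. Writing $D_{Z_1}$ in Todorov form, the term $Z_1\,\partial_{Z_1}^2/2$ contracted with $\partial_{P_1}$ produces $Z_1\cdot P_j$ factors, and these are re-expressed modulo $P_1^2=P_1\cdot Z_1=Z_1^2=0$. Second, we apply $\partial_{P_1}\cdot$ to the result. The contractions that appear involve $P_i\cdot P_j$, $Z_1\cdot P_j$, $Z_1\cdot Z_j$ and $Z_j\cdot P_k$, and these are rewritten via $f^{-1}$. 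Here $Z_1\cdot P_2$ and $Z_1\cdot P_3$ are not individually invariant, so they must be recombined into $V_1$, $H_{12}$ and $H_{13}$. Proposition~\ref{prop: diff operator 1} guarantees that this recombination succeeds exactly at $\Delta_1=d-2+s_1$, with the prefactor of $P_{ij}$ preserved. Third, the coefficients of the resulting expression are polynomial in $h_j,v_j$, of degree at most three, which reflects falling factorials such as $h_1(h_1-1)$ arising from second derivatives in $Z_1$ followed by one derivative in $P_1$.

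For the right side, we apply $\mathcal{D}_{1,1}$ to $M$. Each monomial operator $c\,\partial_{V_1}^{a}\partial_{V_2}^{b}\partial_{H_{12}}^{e}\cdots$ returns $M$ times a Laurent monomial, multiplied by the corresponding falling factorials in the exponents. Matching the two sides therefore reduces to an identity between polynomials in $(h_1,h_2,h_3,v_j,\Delta_2,\Delta_3,d)$ multiplying a finite list of monomial shifts of $M$. The organization into $\mathcal{D}^{(1)},\mathcal{D}^{(2)},\mathcal{D}^{(3)}$ mirrors the three pieces of $\partial_{P_1}\cdot D_{Z_1}$, namely the $(d/2-1)\partial_{Z}$ term, the $Z\cdot\partial_Z$ term, and the Laplacian term. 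The antisymmetry under $(2\leftrightarrow3)$ reduces the work by half, because the left side transforms the same way: swapping $2$ and $3$ sends $V_1\mapsto -V_1$.

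The main obstacle is the bookkeeping in the second step, and in particular showing that all residual $Z_1\cdot P_j$ terms collapse into building blocks. I would carry this out symbolically, as the paper does, in \texttt{Mathematica}. Because both sides are polynomial in $h_j$ of bounded degree, it suffices to check equality at finitely many integer choices of $(h_1,h_2,h_3,s_2,s_3)$ with symbolic $\Delta_2,\Delta_3,d$. Polynomial interpolation then upgrades these finitely many checks to the full identity.
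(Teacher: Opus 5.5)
Your plan is the paper's own computation run in the checking direction. The paper fixes the coefficients of an order-three ansatz by matching against $(\partial_{P_1}\cdot D_{Z_1})f(\cdot)$; you substitute the printed coefficients and compare. That is a legitimate route.

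Your interpolation remark is a real gain: it supplies the passage from finitely many spins to all spins, which the paper only conjectures. Three adjustments are needed:
\begin{itemize}
\item You must also vary $s_1$ (equivalently $v_1$), since $\Delta_1=d-2+s_1$ enters the prefactor.
\item For symbolic $d$ you never need to invert $f$. Apply $f$ to $\mathcal{D}_{1,1}M$ and compare rational functions of the algebraically independent inner products.
\item Uniqueness is easier than you make it, and the exclusion of $\partial_{H_{23}}$ plays no role. Two solutions differ by an operator killing every monomial with $s_1\ge 1$, i.e.\ the ideal $(V_1,H_{12},H_{13})$ (for generic $d$, where $f$ is injective). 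Such an operator $E$ vanishes, since $E\circ V_1=0$ forces $E=0$ in the Weyl algebra.
\end{itemize}
Also, $\mathcal{D}^{(1)},\mathcal{D}^{(2)},\mathcal{D}^{(3)}$ are simply the first-, second- and third-order parts in the building blocks. They are not images of the three pieces of $D_{Z_1}$.

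Two points would make the check fail as you set it up.

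\textbf{The symmetry remark is wrong.} Under the genuine exchange of particles $2$ and $3$ one has $V_1\mapsto -V_1$, $V_2\mapsto -V_3$, $V_3\mapsto -V_2$, $H_{12}\leftrightarrow H_{13}$, $\Delta_2\leftrightarrow\Delta_3$. Under this exchange the left-hand side is \emph{invariant}, not antisymmetric. The printed $-(2\leftrightarrow 3)$ has to be read as the naive relabelling $V_2\leftrightarrow V_3$, $H_{12}\leftrightarrow H_{13}$, $\Delta_2\leftrightarrow\Delta_3$ with $V_1$ fixed. Antisymmetry under that relabelling is equivalent to invariance under the genuine exchange only because every term of $\mathcal{D}_{1,1}$ is odd in the $V$'s: $v_1+v_2+v_3\equiv s_1+s_2+s_3 \pmod 2$, and $s_1$ drops by one. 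If you read $-(2\leftrightarrow 3)$ with $V_1\mapsto -V_1$, as your remark suggests, three things go wrong:
\begin{itemize}
\item $\mathcal{D}_{1,1}$ becomes antisymmetric under the genuine exchange, so it can never match an invariant left side;
\item the two $\partial_{V_1}$ terms cancel;
\item the $V_3\partial_{H_{13}}$ term changes sign.
\end{itemize}

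\textbf{The displayed formula fails the smallest case, even with the correct reading.} Take $M=V_1^s$ with $\mathbf{s}=(s,0,0)$ and $\Delta_1=d-2+s$. Write $V_1=Z_1\cdot W$ with $W=(P_2P_{13}-P_3P_{12})/P_{23}$. Using $W\cdot P_2=-P_{12}$, $W\cdot P_3=P_{13}$, $W\cdot W=-2P_{12}P_{13}/P_{23}$, $\partial_{P_1}\cdot W=0$ and $Z_1\cdot\partial_{P_1}V_1=0$, one finds
\[
(\partial_{P_1}\cdot D_{Z_1})\, f\Big(\frac{V_1^s}{P_{12}^{\tau_{12,3}}P_{23}^{\tau_{23,1}}P_{13}^{\tau_{31,2}}}\Big)=\frac{s(d+s-3)}{2}\,(\Delta_2-\Delta_3)\, f\Big(\frac{V_1^{s-1}}{P_{12}^{\tau_{12,3}}P_{23}^{\tau_{23,1}}P_{13}^{\tau_{31,2}}}\Big).
\]
The displayed operator instead gives $\mathcal{D}_{1,1}V_1^s=\frac{s(s+1-d)}{2}(\Delta_2-\Delta_3)V_1^{s-1}$. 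Already at $s=1$ the two have opposite signs. They agree exactly when the first term of $\mathcal{D}^{(1)}_{1,1}$ is $\frac12(\Delta_2-\Delta_3)\partial_{V_1}$. By contrast, the checks $M=H_{12}$, $H_{13}$ and $H_{12}^2$ agree with the printed coefficients.

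So a faithful execution of your plan would not certify the formula verbatim; it would expose what appears to be a sign misprint. This is where the two routes genuinely differ: solving the ansatz produces the operator, whereas verification can only certify what is written down.
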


We have similarly computed the differential operator $\mathcal{D}_{1,2}$ in $d=3$. In this case one coefficient was left free, and could therefore be set to zero. The output can be found at \cite{zenodo}. 

\begin{remark}
    Note that the differential operator $\mathcal{D}_{1,1}$ is independent of the spins $s_1$, $s_2$ and $s_3$. Note also that its only dependence on $\Delta_2$ and $\Delta_3$ is through their difference. Both of these facts also hold for the differential operator $\mathcal{D}_{1,2}$ in $d=3$. The lack of dependence on the spins suggests the following: to construct the differential operators $\mathcal{D}_{i,s_i-t_i}$ for general spins $\mathbf{s}$, one only needs to construct their action on $W_{\mathbf{s}}$ for any single choice of $s_1$, $s_2$ and $s_3$. In particular, it suggests that the differential operator $\mathcal{D}_{1,2}$ that we derived for specific values of the spins in \texttt{Mathematica}, holds for general spins. It would be interesting to obtain both mathematical proof and physical explanation of this. 
\end{remark}

\section{Conclusions and outlook}

In this paper we have developed methods to constrain the set of independent tensor structures appearing in conformal correlators from a purely algebraic and combinatorial approach. Working in terms of the building blocks $P_{ij}$, $H_{ij}$, and $V_{i,jk}$, we provide concrete and algorithmic methods for enumerating and generating these structures, taking Bose symmetry, partial conservation and algebraic relations into consideration. In doing this, we provide rigorous proofs for results used widely in the physics literature. As we focus on the algebraic perspective, we attack the problem from an angle that has largely been avoided in the literature, where representation-theoretic approaches have been used instead, see e.g.~\cite{DKKPS, KSD}.

\medskip

We now summarise the open questions and conjectures that appeared in this work. 
\begin{enumerate}
    \item In recent years, polytopes appearing in physics have received substantial attention from the discrete geometry community. A primary example is that of cosmological polytopes \cite{cosmopolytopes}. In the same spirit, we find that it would be interesting to investigate the mathematical properties of conformal $n$-point polytopes $\mathcal{C}_n$ in more detail.

    \item Section \ref{per: com} connects our counting problem to Kostka numbers and Littlewood-Richardson coefficients. This raises the question of whether there is a natural representation theoretic interpretation of our combinatorial results that connects to~\cite{KSD}.

    \item In Section \ref{sec: alg rel} we studied algebraic relations between the basic building blocks \eqref{eq: confblocks intro}. From the physics perspective, one wishes to understand the number of monomials in $H_{ij}$ and $\mathcal{V}_{ij}$ of a given multidegree that are algebraically independent over the field $\mathbb{C}(\mathbf{u})$ of rational functions in the cross-ratios. This is encoded by the Hilbert function $h_I$ introduced in the end of Section \ref{sec: alg rel}. While we give an upper bound in the form of the Hilbert function $h_{\Tilde{I}}$, it is an open problem to compute the function $h_I$ itself. This is computationally challenging because both the elements of the coefficient field and the blocks $H_{ij}, \mathcal{V}_{ij}$ depend on the same vectors $P_i$, restricting the use of numerical methods.  

    \item In the context of Section \ref{sec: Bose}, a challenging problem is to computationally obtain a basis of $n$-point structures respecting Bose symmetry for $n\geq 4$. The difficulty here again comes from the fact that these structures depend on cross-ratios arbitrarily.

    \item It would be interesting to obtain an explanation (physical and mathematical) of the fact that the operator $\mathcal{D}_{i,s_i-t_i}$ constructed in Section \ref{sec: conservation} does not depend on the values of spins and only depends on the differences between scaling dimensions.

   \item The approach of \cite{Zhiboedov} yields generating functionals for all three-point function structures of fully conserved currents ($s_i - t_i = 1$) of arbitrary spin in arbitrary $d$. It remains an open problem to extend this to partially conserved currents of arbitrary depth, in other words to obtain generating functionals for the general solution to
    \begin{equation}
     \mathcal{D}_{i,s_i-t_i}\, g_{\mathbf{s},\mathbf{\Delta}}(H_{ij}, V_i) = 0\,,
    \end{equation}
    where $G_{\mathbf{s},\mathbf{\Delta}} = g_{\mathbf{s},\mathbf{\Delta}}(H_{ij}, V_i) \big/ P^{\tau_{12,3}}_{12}P^{\tau_{23,1}}_{23}P^{\tau_{31,2}}_{13}$. In Section~\ref{sec: conservation} we established the conditions under which $\mathcal{D}_{i,s_i-t_i}$ exists as a differential operator on the $H_{ij}$ and $V_{i,jk}$, and provided an algorithm for computing it explicitly. A remaining open problem is to solve the resulting differential equation. Beyond obtaining the generating functionals themselves, one could, in the spirit of \cite{Zhiboedov}, investigate whether the resulting structures admit an interpretation in terms of known CFTs involving partially conserved currents like \cite{Brust_2017}, or whether there exist structures that are not generated by any known CFTs at all.
\end{enumerate}

\medskip

We conclude by noting that our methods have immediate applications to the conformal and cosmological bootstrap programs, providing efficient means to construct and understand the algebraic structures underlying conformal correlators.
These methods can also be applied in other contexts. Indeed, integer solutions of \cite[Eq.~15]{Heslop2026} are lattice points of the transportation polytopes introduced in Remark \ref{rem: transportation polytope}, and the generating function \cite[Eq.~17]{Heslop2026} can be seen as the Hilbert series of a polynomial ring. More generally, Hilbert series have already been used as generating functions in effective field theory and standard-model physics \cite{HLMM,LM1,LM2,YZ}. Through this paper, we hope to invite physicists to further explore applications of existing, powerful results in nonlinear algebra, combinatorics and invariant theory.

\section*{Acknowledgements}
We thank Ben Hollering and Svala Sverrisdóttir for providing code to compute relations among building blocks, Thomas Kahle for pointing out the \texttt{barvinok} package, Daniel Baumann, Yassine El Maazouz, Callum R.~T.~Jones, and Bernd Sturmfels for helpful discussions.
\smallskip

Our research is funded by the European Union (ERC, UNIVERSE PLUS, 101118787). Views and opinions expressed are, however, those of the authors only and do not necessarily reflect those of the European Union or the European Research Council Executive Agency. Neither the European Union nor the granting authority can be held responsible for them. 

\printbibliography[heading=bibintoc]

\bigskip \medskip \bigskip

\noindent
\small {\bf Authors' addresses:}
\smallskip

\noindent Viktoriia Borovik, MPI-MiS, Leipzig, Germany
\hfill {\tt viktoriia.borovik@mis.mpg.de}

\noindent Claire de Korte, MPI-MiS, Leipzig, Germany
\hfill {\tt claire.dekorte@mis.mpg.de}

\noindent Nathan Meurrens, University of Amsterdam, the Netherlands
\hfill {\tt n.meurrens@uva.nl}

\noindent Dmitrii Pavlov, MPI for Physics, Garching, Germany
\hfill {\tt pavlov@mpp.mpg.de}
\end{document}